\DeclareMathOperator*{\argmin}{arg\,min}
\DeclareMathOperator{\bor}{\mathcal{B}}
\DeclareMathOperator{\cov}{cov}
\DeclareMathOperator{\expect}{\mathbb{E}}
\DeclareMathOperator{\norm}{N}
\DeclareMathOperator{\prob}{\mathbb{P}}
\DeclareMathOperator{\rk}{rk}
\DeclareMathOperator{\spn}{sp}
\DeclareMathOperator{\tr}{tr}
\DeclareMathOperator{\trans}{\mathsf{T}}
\DeclareMathOperator{\vary}{var}
\newcommand{\comp}{\mathbb{C}}
\newcommand{\eps}{\varepsilon}
\newcommand{\iid}{i.i.d.\@ }
\newcommand{\real}{\mathbb{R}}
\newcommand{\subs}{\subseteq}
\begin{document}

\title{Ivanov-Regularised Least-Squares Estimators over Large RKHSs and Their Interpolation Spaces}

\author{\name Stephen Page \email s.page@lancaster.ac.uk \\
\addr STOR-i \\
Lancaster University\\
Lancaster, LA1 4YF, United Kingdom
\AND
\name Steffen Gr\"{u}new\"{a}lder \email s.grunewalder@lancaster.ac.uk \\
\addr Department of Mathematics and Statistics\\
Lancaster University\\
Lancaster, LA1 4YF, United Kingdom}

\editor{}

\maketitle

\begin{abstract}
We study kernel least-squares estimation under a norm constraint. This form of regularisation is known as Ivanov regularisation and it provides better control of the norm of the estimator than the well-established Tikhonov regularisation. Ivanov regularisation can be studied under minimal assumptions. In particular, we assume only that the RKHS is separable with a bounded and measurable kernel. We provide rates of convergence for the expected squared $L^2$ error of our estimator under the weak assumption that the variance of the response variables is bounded and the unknown regression function lies in an interpolation space between $L^2$ and the RKHS. We then obtain faster rates of convergence when the regression function is bounded by clipping the estimator. In fact, we attain the optimal rate of convergence. Furthermore, we provide a high-probability bound under the stronger assumption that the response variables have subgaussian errors and that the regression function lies in an interpolation space between $L^\infty$ and the RKHS. Finally, we derive adaptive results for the settings in which the regression function is bounded.
\end{abstract}

\begin{keywords}
Interpolation Space, Ivanov Regularisation, Regression, RKHS, Training and Validation
\end{keywords}

\section{Introduction} \label{sIntro}

One of the key problems to overcome in nonparametric regression is overfitting, due to estimators coming from large hypothesis classes. To avoid this phenomenon, it is common to ensure that both the empirical risk and some regularisation function are small when defining an estimator. There are three natural ways to achieve this goal. We can minimise the empirical risk subject to a constraint on the regularisation function, minimise the regularisation function subject to a constraint on the empirical risk or minimise a linear combination of the two. These techniques are known as Ivanov regularisation, Morozov regularisation and Tikhonov regularisation respectively \citep{oneto2016tikhonov}. Ivanov and Morozov regularisation can be viewed as dual problems, while Tikhonov regularisation can be viewed as the Lagrangian relaxation of either.

Tikhonov regularisation has gained popularity as it provides a closed-form estimator in many situations. In particular, Tikhonov regularisation in which the estimator is selected from a reproducing-kernel Hilbert space (RKHS) has been extensively studied \citep{smale2007learning, caponnetto2007optimal, steinwart2008support, mendelson2010regularization, steinwart2009optimal}. Although Tikhonov regularisation produces an estimator in closed form, it is Ivanov regularisation which provides the greatest control over the hypothesis class, and hence over the estimator it produces. For example, if the regularisation function is the norm of the RKHS, then the bound on this function forces the estimator to lie in a ball of predefined radius inside the RKHS. An RKHS norm measures the smoothness of a function, so the norm constraint bounds the smoothness of the estimator. By contrast, Tikhonov regularisation provides no direct control over the smoothness of the estimator.

The control we have over the Ivanov-regularised estimator is useful in many settings. The most obvious use of Ivanov regularisation is when the regression function lies in a ball of known radius inside the RKHS. In this case, Ivanov regularisation can be used to constrain the estimator to lie in the same ball. Suppose, for example, that we are interested in estimating the trajectory of a particle from noisy observations over time. Assume that the velocity or acceleration of the particle is constrained by certain physical conditions. Constraints of this nature can be imposed by bounding the norm of the trajectory in a Sobolev space. Certain Sobolev spaces are RKHSs, so it is possible to use Ivanov regularisation to enforce physical conditions on an estimator of the trajectory which match those of the trajectory itself. Ivanov regularisation can also be used within larger inference methods. It is compatible with validation, allowing us to control an estimator selected from an uncountable collection. This is because the Ivanov-regularised estimator is continuous in the size of the ball containing it (see Lemma \ref{lContEst}), so the estimators parametrised by an interval of ball sizes can be controlled simultaneously using chaining.

In addition to the other useful properties of the Ivanov-regularised estimator, Ivanov regularisation can be performed almost as quickly as Tikhonov regularisation. The Ivanov-regularised estimator is a support vector machine (SVM) with regularisation parameter selected to match the norm constraint (see Lemma \ref{lCalcEst}). This parameter can be selected to within a tolerance $\eps$ using interval bisection with order $\log(1/\eps)$ iterations. In general, Ivanov regularisation requires the calculation of order $\log(1/\eps)$ SVMs.

In this paper, we study the behaviour of the Ivanov-regularised least-squares estimator with regularisation function equal to the norm of the RKHS. We derive a number of novel results concerning the rate of convergence of the estimator in various settings and under various assumptions. Our analysis is performed by controlling empirical processes over balls in the RKHS. By contrast, the analysis of Tikhonov-regularised estimators usually relies on the spectral decomposition of the kernel operator $T$ on $L^2 (P)$. Here, $P$ is the covariate distribution.

We first prove an expectation bound on the squared $L^2 (P)$ error of our estimator of order $n^{- \beta/2}$, under the weak assumption that the response variables have bounded variance. Here, $n$ is the number of data points, and $\beta$ parametrises the interpolation space between $L^2 (P)$ and $H$ containing the regression function. As far as we are aware, the analysis of an estimator in this setting has not previously been considered. The definition of an interpolation space is given in Section \ref{sRKHSInter}. The expected squared $L^2 (P)$ error can be viewed as the expected squared error of our estimator at a new independent covariate, with the same distribution $P$. If we also assume that the regression function is bounded, then it makes sense to clip our estimator so that it takes values in the same interval as the regression function. This further assumption allows us to achieve an expectation bound on the squared $L^2 (P)$ error of the clipped estimator of order $n^{- \beta/(1 + \beta)}$.

We then move away from the average behaviour of the error towards its behaviour in the worst case. We obtain high-probability bounds of the same order, under the stronger assumption that the response variables have subgaussian errors and the interpolation space is between $L^\infty$ and $H$. The second assumption is quite natural as we already assume that the regression function is bounded, and $H$ can be continuously embedded in $L^\infty$ since it has a bounded kernel $k$. Note that this assumption means that the set of possible regression functions is independent of the covariate distribution.

When the regression function is bounded, we also analyse an adaptive version of our estimator, which does not require us to know which interpolation space contains the regression function. This adaptive estimator obtains bounds of the same order as the non-adaptive one. The adaptive estimator is created using training and validation. The bounds do not contain higher-order terms because we treat the validation step as a form of Ivanov regularisation, as opposed to, for example, using a union bound.

Our expectation bound of order $n^{- \beta/(1 + \beta)}$, when the regression function is bounded, improves on the high-probability bound of \citet{smale2007learning} of order $n^{-\beta/2}$. Their bound is attained under the stronger assumption that the regression function lies in the image of a power of the kernel operator, instead of an interpolation space \citep[see][]{steinwart2012mercer}. The authors also assume that the response variables are bounded. Furthermore, for a fixed $\beta \in (0, 1)$, \citet{steinwart2009optimal} show that there is an instance of our problem with a bounded regression function such that the following holds. For all estimators $\hat f$ of $g$, for some $\eps > 0$, we have
\begin{equation*}
\lVert \hat f - g \rVert_{L^2 (P)}^2 \geq C_{\alpha, \eps} n^{- \alpha}
\end{equation*}
with probability at least $\eps$ for all $n \geq 1$, for some constant $C_{\alpha, \eps} > 0$, for all $\alpha > \beta/(1 + \beta)$. Hence, for all estimators $\hat f$ of $g$, we have
\begin{equation*}
\expect \left (\lVert \hat f - g \rVert_{L^2 (P)}^2 \right ) \geq C_{\alpha, \eps} \eps n^{- \alpha}
\end{equation*}
for all $n \geq 1$, for all $\alpha > \beta/(1 + \beta)$. In this sense, our expectation bound in this setting is optimal because it attains the order $n^{- \beta/(1 + \beta)}$, the smallest possible power of $n$. Our expectation bound on the adaptive version of our estimator is also optimal, because the bound is of the same order as in the easier non-adaptive setting.

The high-probability bound of \citet{steinwart2009optimal} is optimal in a similar sense, although the authors achieve faster rates by assuming a fixed rate of decay of the eigenvalues of the kernel operator $T$, as discussed in Section \ref{sLit}. Since there is an additional parameter for the decay of the eigenvalues, the collection of problem instances for a fixed set of parameters is smaller in their paper. This means that our optimal rates are the slowest of the optimal rates in \citet{steinwart2009optimal}.

Note that all of the bounds in this paper are on the squared $L^2 (P)$ error of our estimators. We do not consider errors based on other norms, such as the interpolation space norms, as it is unclear how the techniques used in this paper could be adapted to such errors. It seems likely that a completely different approach would be necessary to bound these other errors.

\section{RKHSs and Their Interpolation Spaces} \label{sRKHSInter}

A Hilbert space $H$ of real-valued functions on $S$ is an RKHS if the evaluation functional $L_x : H \to \real$, $L_x h = h(x)$, is bounded for all $x \in S$. In this case, $L_x \in H^*$ the dual of $H$ and the Riesz representation theorem tells us that there is some $k_x \in H$ such that $h(x) = \langle h, k_x \rangle_H$ for all $h \in H$. The kernel is then given by $k(x_1, x_2) = \langle k_{x_1}, k_{x_2} \rangle_H$ for $x_1, x_2 \in S$, and is symmetric and positive-definite.

Now suppose that $(S, \mathcal{S})$ is a measurable space on which $P$ is a probability measure. We can define a range of interpolation spaces between $L^2 (P)$ and $H$ \citep{bergh2012interpolation}. Let $(Z, \lVert \cdot \rVert_Z)$ be a Banach space and $(V, \lVert \cdot \rVert_V)$ be a dense subspace of $Z$. The $K$-functional of $(Z, V)$ is
\begin{equation*}
K(z, t) = \inf_{v \in V} (\lVert z - v \rVert_Z + t \rVert v \lVert_V)
\end{equation*}
for $z \in Z$ and $t > 0$. For $\beta \in (0, 1)$ and $1 \leq q < \infty$, we define
\begin{equation*}
\lVert z \rVert_{\beta, q} = \left ( \int_0^\infty ( t^{-\beta} K(z, t) )^q t^{-1} dt \right )^{1/q} \text{ and } \lVert z \rVert_{\beta, \infty} = \sup_{t > 0} (t^{-\beta} K(z, t))
\end{equation*}
for $z \in Z$. The interpolation space $[Z, V]_{\beta, q}$ is defined to be the set of $z \in Z$ such that $\lVert z \rVert_{\beta, q} < \infty$. Smaller values of $\beta$ give larger spaces. The space $[Z, V]_{\beta, q}$ is not much larger than $V$ when $\beta$ is close to 1, but we obtain spaces which get closer to $Z$ as $\beta$ decreases. The following result is essentially Theorem 3.1 of \citet{smale2003estimating}. The authors only consider the case in which $\lVert v \rVert_Z \leq \lVert v \rVert_V$ for all $v \in V$, however the result holds by the same proof even without this condition.

\begin{lemma} \label{lApproxInter}
Let $(Z, \lVert \cdot \rVert_Z)$ be a Banach space, $(V, \lVert \cdot \rVert_V)$ be a dense subspace of $Z$ and $z \in [Z, V]_{\beta, \infty}$. We have
\begin{equation*}
\inf \{ \lVert v - z \rVert_Z : v \in V, \lVert v \rVert_V \leq r \} \leq \frac{\lVert z \rVert_{\beta, \infty}^{1/(1-\beta)}}{r^{\beta/(1-\beta)}}.
\end{equation*}
\end{lemma}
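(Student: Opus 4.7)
The proof rests on unpacking the definition of the interpolation norm $\lVert z \rVert_{\beta, \infty} = \sup_{t>0} t^{-\beta} K(z, t)$, which immediately yields the single workhorse inequality $K(z, t) \leq \lVert z \rVert_{\beta, \infty} \, t^\beta$ for every $t > 0$ (I am reading the right-hand side of the displayed bound as $\lVert z \rVert_{\beta, \infty}$, as the subscript $q$ appears to be a typographical slip). Writing $C = \lVert z \rVert_{\beta, \infty}$ for brevity, the rest is a balancing argument: unfolding the infimum in $K(z, t)$, for each $t > 0$ and each slack $\eps > 0$ I can produce $v \in V$ with $\lVert z - v \rVert_Z + t \lVert v \rVert_V \leq C t^\beta + \eps$, and I want to choose $t$ so that the second summand already forces $\lVert v \rVert_V \leq r$ while keeping the first summand at the claimed order.

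The natural choice is the balancing point $t^* = (C/r)^{1/(1-\beta)}$, at which $C t^{*\beta} = r t^*$; a direct calculation gives $r t^* = C^{1/(1-\beta)} / r^{\beta/(1-\beta)}$, which matches the target. Rather than working at $t^*$ itself, I would take an arbitrary $t > t^*$, so that $C t^\beta < r t$ strictly, then pick $\eps > 0$ small enough that $C t^\beta + \eps \leq r t$. Extracting the corresponding $v \in V$ from the definition of the infimum gives $\lVert z - v \rVert_Z + t \lVert v \rVert_V \leq r t$, from which I can read off simultaneously $\lVert v \rVert_V \leq r$ and $\lVert z - v \rVert_Z \leq r t$. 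The infimum in the statement is therefore at most $r t$ for every $t > t^*$; letting $t \downarrow t^*$ produces the claimed bound.

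The only obstacle is the standard nuisance that the infimum defining $K(z, t)$ is not in general attained, which is why the slack $\eps$ and the approach from above $t > t^*$ are necessary; with these in place everything falls out mechanically. Note that the argument never compares $\lVert \cdot \rVert_Z$ and $\lVert \cdot \rVert_V$ directly — they only interact through the $K$-functional — which is exactly why the density of $V$ in $Z$ and the norm-domination hypothesis of Smale and Zhou's original statement are superfluous for this conclusion.
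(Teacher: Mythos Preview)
Your proof is correct and is precisely the standard balancing argument; the paper does not supply its own proof but simply cites Theorem~3.1 of Smale and Zhou (2003), noting that their argument goes through without the density and norm-domination hypotheses --- which is exactly the point you make in your closing sentence. Your reading of $\lVert z \rVert_{\beta, q}$ as $\lVert z \rVert_{\beta, \infty}$ is also correct, since the lemma is only ever applied in the paper with $q = \infty$ (assumptions (g1) and (g3)).
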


When $H$ is dense in $L^2 (P)$, we can define the interpolation spaces $[L^2 (P), H]_{\beta, q}$, where $L^2 (P)$ is the space of measurable functions $f$ on $(S, \mathcal{S})$ such that $f^2$ is integrable with respect to $P$. We work with $q = \infty$, which gives the largest space of functions for a fixed $\beta \in (0, 1)$. We can then use the approximation result in Lemma \ref{lApproxInter}. When $H$ is dense in $L^\infty$, we also require $[L^\infty, H]_{\beta, q}$, where $L^\infty$ is the space of bounded measurable functions on $(S, \mathcal{S})$.

\section{Literature Review} \label{sLit}

Early research on RKHS regression does not make assumptions on the rate of decay of the eigenvalues of the kernel operator. For example, \citet{smale2007learning} assume that the response variables are bounded and the regression function is of the form $g = T^{\beta/2} f$ for $\beta \in (0, 1]$ and $f \in L^2 (P)$. Here, $T : L^2 (P) \to L^2 (P)$ is the kernel operator and $P$ is the covariate distribution. The authors achieve a squared $L^2 (P)$ error of order $n^{- \beta/2}$ with high probability by using SVMs. 

Initial research which does make assumptions on the rate of decay of the eigenvalues of the kernel operator, such as that of \citet{caponnetto2007optimal}, assumes that the regression function is at least as smooth as an element of $H$. However, their paper still allows for regression functions of varying smoothness by letting $g \in T^{(\beta - 1)/2} (H)$ for $\beta \in [1, 2]$. By assuming that the $i$th eigenvalue of $T$ is of order $i^{- 1/p}$ for $p \in (0, 1]$, the authors achieve a squared $L^2 (P)$ error of order $n^{- \beta / (\beta + p)}$ with high probability by using SVMs. This squared $L^2 (P)$ error is shown to be of optimal order for $\beta \in (1, 2]$.

Later research focuses on the case in which the regression function is at most as smooth as an element of $H$. Often, this research demands that the response variables are bounded. For example, \citet{mendelson2010regularization} assume that $g \in T^{\beta/2} (L^2 (P))$ for $\beta \in (0, 1)$ to obtain a squared $L^2 (P)$ error of order $n^{- \beta / (1 + p)}$ with high probability by using Tikhonov-regularised least-squares estimators. The authors also show that if the eigenfunctions of the kernel operator $T$ are uniformly bounded in $L^\infty$, then the order can be improved to $n^{- \beta / (\beta + p)}$. \citet{steinwart2009optimal} relax the condition on the eigenfunctions to the condition
\begin{equation*}
\lVert h \rVert_\infty \leq C_p \lVert h \rVert_H^p \lVert h \rVert_{L^2 (P)}^{1-p}
\end{equation*}
for all $h \in H$ and some constant $C_p > 0$. The same rate is attained by using clipped Tikhonov-regularised least-squares estimators, including clipped SVMs, and is shown to be optimal. The authors assume that $g$ is in an interpolation space between $L^2 (P)$ and $H$, which is slightly more general than the assumption of \citet{mendelson2010regularization}. A detailed discussion about the image of $L^2 (P)$ under powers of $T$ and interpolation spaces between $L^2 (P)$ and $H$ is given by \citet{steinwart2012mercer}.

Lately, the assumption that the response variables must be bounded has been relaxed to allow for subexponential errors. However, the assumption that the regression function is bounded has been maintained. For example, \citet{fischer2017sobolev} assume that $g \in T^{\beta/2} (L^2 (P))$ for $\beta \in (0, 2]$ and that $g$ is bounded. The authors also assume that $T^{\alpha/2} (L^2 (P))$ is continuously embedded in $L^\infty$, with respect to an appropriate norm on $T^{\alpha/2} (L^2 (P))$, for some $\alpha < \beta$. This gives the same squared $L^2 (P)$ error of order $n^{- \beta / (\beta + p)}$ with high probability by using SVMs.

\section{Contribution}

In this paper, we provide bounds on the squared $L^2 (P)$ error of our Ivanov-regularised least-squares estimator when the regression function comes from an interpolation space between $L^2 (P)$ and an RKHS $H$, which is separable with a bounded and measurable kernel $k$. We use the norm of the RKHS as our regularisation function. Under the weak assumption that the response variables have bounded variance, we prove a bound on the expected squared $L^2 (P)$ error of order $n^{- \beta/2}$ (Theorem \ref{tInterBound} on page \pageref{tInterBound}). As far as we are aware, the analysis of an estimator in this setting has not previously been considered. If we assume that the regression function is bounded, then we can clip the estimator and achieve an expected squared $L^2 (P)$ error of order $n^{- \beta/(1+\beta)}$ (Theorem \ref{tBddInterBound} on page \pageref{tBddInterBound}).

Under the stronger assumption that the response variables have subgaussian errors and the regression function comes from an interpolation space between $L^\infty$ and $H$, we show that the squared $L^2 (P)$ error is of order $n^{- \beta/(1+\beta)}$ with high probability (Theorem \ref{tProbBddInterBound} on page \pageref{tProbBddInterBound}). For the settings in which the regression function is bounded, we use training and validation on the data in order to select the size of the constraint on the norm of our estimator. This gives us an adaptive estimation result which does not require us to know which interpolation space contains the regression function. We obtain a squared $L^2 (P)$ error of order $n^{- \beta/(1+\beta)}$ in expectation and with high probability, depending on the setting (Theorems \ref{tValInterBound} and \ref{tProbValInterBound} on pages \pageref{tValInterBound} and \pageref{tProbValInterBound}). In order to perform training and validation, the response variables in the validation set must have subgaussian errors. The expectation results for bounded regression functions are of optimal order in the sense discussed at the end of Section \ref{sIntro}.
\begin{table}[h]
\begin{center}
\begin{tabular}{l|ccc}
Regression Function & $L^2 (P)$ Interpolation & $L^\infty$ Bound & $L^\infty$ Interpolation \\
Response Variables & Bounded Variance & Bounded Variance & Subgaussian Errors \\
Bound Type & Expectation & Expectation & High Probability \\
\hline
Bound Order & $n^{-\beta/2}$ & $n^{-\beta/(1 + \beta)}$ $(\ast)$ & $n^{-\beta/(1 + \beta)}$ \\
\end{tabular}
\end{center}
\caption{Orders of bounds on squared $L^2 (P)$ error}
\label{tRes}
\end{table}

The validation results are summarised in Table \ref{tValRes}. Again, the columns for which there is an $L^\infty$ bound on the regression function also make the $L^2 (P)$ interpolation assumption. The assumptions on the response variables relate to those in the validation set, which has $\tilde n$ data points. We assume that $\tilde n$ is equal to some multiple of $n$. Again, orders of bounds marked with $(\ast)$ are known to be optimal.

\begin{table}[h]
\begin{center}
\begin{tabular}{l|cc}
Regression Function & $L^\infty$ Bound & $L^\infty$ Interpolation \\
Response Variables & Subgaussian Errors & Subgaussian Errors \\
Bound Type & Expectation & High Probability \\
\hline
Bound Order & $n^{-\beta/(1 + \beta)}$ $(\ast)$ & $n^{-\beta/(1 + \beta)}$
\end{tabular}
\end{center}
\caption{Orders of validation bounds on squared $L^2 (P)$ error}
\label{tValRes}
\end{table}

The validation results are summarised in Table \ref{tValRes}. Again, the columns for which there is an $L^\infty$ bound on the regression function also make the $L^2 (P)$ interpolation assumption. The assumptions on the response variables relate to those in the validation set, which has $\tilde n$ data points. We assume that $\tilde n$ is equal to some multiple of $n$. Again, orders of bounds marked with $(\ast)$ are known to be optimal.

\section{Problem Definition} \label{sDef}

We now formally define our regression problem. For a topological space $T$, let $\bor(T)$ be the Borel $\sigma$-algebra of $T$. Let $(S, \mathcal{S})$ be a measurable space. Assume that $(X_i, Y_i)$ for $1 \leq i \leq n$ are $(S \times \real, \mathcal{S} \otimes \bor(\real))$-valued random variables on the probability space $(\Omega, \mathcal{F}, \prob)$, which are \iid with $X_i \sim P$ and $\expect(Y_i^2) < \infty$, where $\expect$ denotes integration with respect to $\prob$. Since any version of $\expect(Y_i \vert X_i)$ is $\sigma(X_i)$-measurable, where $\sigma(X_i)$ is the $\sigma$-algebra generated by $X_i$, we have that $\expect(Y_i \vert X_i) = g(X_i)$ almost surely for some function $g$ which is measurable on $(S, \mathcal{S})$ \citep[Section A3.2 of][]{williams1991probability}. From the definition of conditional expectation and the identical distribution of the $(X_i, Y_i)$, it is clear that we can choose $g$ to be the same for all $1 \leq i \leq n$. The conditional expectation used is that of Kolmogorov, defined using the Radon--Nikodym derivative. Its definition is unique almost surely. Since $\expect(Y_i^2) < \infty$, it follows that $g \in L^2 (P)$ by Jensen's inequality. To summarise, $\expect(Y_i \vert X_i) = g(X_i)$ almost surely for $1 \leq i \leq n$ with $g \in L^2 (P)$. We assume throughout that
\begin{enumerate}[label={($Y$1)}]
\item \hfil $\vary(Y_i \vert X_i) \leq \sigma^2$ almost surely for $1 \leq i \leq n$. \label{Y1}
\end{enumerate}

\vspace{0.5\baselineskip}

Our results depend on how well $g$ can be approximated by elements of an RKHS $H$ with kernel $k$. We make the following assumptions.
\begin{enumerate}[label={($H$)}]
\item {The RKHS $H$ with kernel $k$ has the following properties:
\begin{itemize}
\item The RKHS $H$ is separable.
\item The kernel $k$ is bounded.
\item The kernel $k$ is a measurable function on $(S \times S, \mathcal{S} \otimes \mathcal{S})$.
\end{itemize}} \label{H}
\end{enumerate}
We define
\begin{equation*}
\lVert k \rVert_\infty = \sup_{x \in S} k(x, x)^{1/2} < \infty.
\end{equation*}
We can guarantee that $H$ is separable by, for example, assuming that $k$ is continuous and $S$ is a separable topological space \citep[Lemma 4.33 of][]{steinwart2008support}. The fact that $H$ has a kernel $k$ which is measurable on $(S \times S, \mathcal{S} \otimes \mathcal{S})$ guarantees that all functions in $H$ are measurable on $(S, \mathcal{S})$ \citep[Lemma 4.24 of][]{steinwart2008support}.

\section{Ivanov Regularisation} \label{sIvanov}

We now consider Ivanov regularisation for least-squares estimators. Let $P_n$ be the empirical distribution of the $X_i$ for $1 \leq i \leq n$. The definition of Ivanov regularisation provides us with the following result.

\begin{lemma} \label{lIneq}
Let $A \subs L^2 (P)$. It may be that $A$ is a function of $\omega \in \Omega$ and does not contain $g$. Let
\begin{equation*}
\hat f \in \argmin_{f \in A} \frac{1}{n} \sum_{i = 1}^n (f(X_i) - Y_i)^2.
\end{equation*}
Then, for all $f \in A$ and all $\omega \in \Omega$, we have
\begin{equation*}
\lVert \hat f -  f \rVert_{L^2 (P_n)}^2 \leq \frac{4}{n} \sum_{i = 1}^n (Y_i - g(X_i)) (\hat f(X_i) - f(X_i)) + 4 \lVert f - g \rVert_{L^2 (P_n)}^2.
\end{equation*}
\end{lemma}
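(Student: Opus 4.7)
The plan is to perform a standard basic-inequality expansion around the regression function $g$, starting from the defining optimality of $\hat f$. Since $\hat f$ minimises the empirical squared error over $A$, we have, pointwise in $\omega \in \Omega$,
\begin{equation*}
\frac{1}{n} \sum_{i=1}^n (\hat f(X_i) - Y_i)^2 \leq \frac{1}{n} \sum_{i=1}^n (f(X_i) - Y_i)^2
\end{equation*}
for every $f \in A$. Note that this inequality holds for every $\omega$, with no measurability issue, because it is simply the defining property of an argmin; this is what allows the final bound to hold pointwise on $\Omega$ even when $A$ depends on $\omega$.

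Next I would write $\hat f(X_i) - Y_i = (\hat f(X_i) - g(X_i)) + (g(X_i) - Y_i)$ and similarly for $f$, then expand both sides. The terms $\frac{1}{n}\sum_i (g(X_i) - Y_i)^2$ are identical and cancel, leaving
\begin{equation*}
\lVert \hat f - g \rVert_{L^2(P_n)}^2 \leq \lVert f - g \rVert_{L^2(P_n)}^2 + \frac{2}{n} \sum_{i=1}^n (Y_i - g(X_i))(\hat f(X_i) - f(X_i)).
\end{equation*}
This is the usual oracle-style basic inequality; the only subtlety is keeping the sign on the cross term straight.

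Finally, to pass from $\lVert \hat f - g \rVert_{L^2(P_n)}^2$ to $\lVert \hat f - f \rVert_{L^2(P_n)}^2$, I would apply the elementary inequality $\lVert a + b \rVert^2 \leq 2\lVert a \rVert^2 + 2\lVert b \rVert^2$ with $a = \hat f - g$ and $b = g - f$ (in $L^2(P_n)$), yielding
\begin{equation*}
\lVert \hat f - f \rVert_{L^2(P_n)}^2 \leq 2 \lVert \hat f - g \rVert_{L^2(P_n)}^2 + 2 \lVert f - g \rVert_{L^2(P_n)}^2.
\end{equation*}
Substituting the previous display produces exactly the claimed factors of $4$ on both terms. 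There is no real obstacle here: the proof is pure algebra and the factor of $4$ arises solely from this last doubling step; one could alternatively keep a factor of $2$ by stating the bound in terms of $\lVert \hat f - g \rVert_{L^2(P_n)}^2$, but the form in the statement is more convenient for later use.
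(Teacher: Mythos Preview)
Your proof is correct. It takes a slightly different route from the paper's, though both are elementary.

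The paper expands $\hat f - Y_i$ around the comparator $f$ (not around $g$): writing $(\hat f(X_i) - Y_i)^2 = \bigl((\hat f(X_i) - f(X_i)) + (f(X_i) - Y_i)\bigr)^2$ and rearranging gives $\lVert \hat f - f \rVert_{L^2(P_n)}^2 \leq \tfrac{2}{n}\sum_i (Y_i - f(X_i))(\hat f(X_i) - f(X_i))$. The paper then splits $Y_i - f(X_i) = (Y_i - g(X_i)) + (g(X_i) - f(X_i))$, applies Cauchy--Schwarz to the second piece, and finishes with the quadratic implication $x^2 \leq a + 2bx \Rightarrow x^2 \leq 2a + 4b^2$. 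So the paper reaches $\lVert \hat f - f \rVert_{L^2(P_n)}^2$ directly on the left and pays the factor $4$ at the end via completing the square, whereas you expand around $g$ to get the clean oracle inequality for $\lVert \hat f - g \rVert_{L^2(P_n)}^2$ and pay the factor $4$ via the doubling $\lVert a + b \rVert^2 \leq 2\lVert a \rVert^2 + 2\lVert b \rVert^2$. Both arguments are equally short; yours is arguably a touch more transparent since it isolates the standard basic inequality before the final algebraic step.
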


The function $g$ in Lemma \ref{lIneq} need not be the regression function, however this is the case of interest. In general, the first term of the right-hand side of the inequality must be controlled by bounding it with
\begin{equation}
\sup_{f_1, f_2 \in A} \frac{4}{n} \sum_{i = 1}^n (Y_i - g(X_i)) (f_1 (X_i) - f_2 (X_i)). \label{eSup1}
\end{equation}
This is usually not measurable. However, if $A$ is a fixed subset of a separable RKHS, then $A$ is separable and the function which evaluates $f \in A$ at $X_i$ is continuous for $1 \leq i \leq n$. This means that the supremum can be replaced with a countable supremum, so the quantity is a random variable on $(\Omega, \mathcal{F})$. Clearly, this term increases as $A$ gets larger. However, if $A$ gets larger, then we may select $f \in A$ closer to $g$. Hence, we can make the second term of the right-hand side of the inequality in Lemma \ref{lIneq} smaller. This demonstrates the trade-off in selecting the size of $A$ for the Ivanov-regularised least-squares estimator constrained to lie in $A$.

The next step in analysing $\hat f$ is to move to a bound on
\begin{equation}
\lVert \hat f -  f \rVert_{L^2 (P)}^2 \leq \lVert \hat f -  f \rVert_{L^2 (P_n)}^2 + \sup_{f_1, f_2 \in A} \left \lvert \lVert f_1 - f_2 \rVert_{L^2 (P_n)}^2 - \lVert f_1 - f_2 \rVert_{L^2 (P)}^2 \right \rvert. \label{eSup2}
\end{equation}
The second term on the right-hand side of this inequality is measurable when $A$ is a fixed subset of a separable RKHS. It also increases with $A$. Finally, we obtain a bound on
\begin{equation*}
\lVert \hat f -  g \rVert_{L^2 (P)}^2 \leq 2 \lVert \hat f -  f \rVert_{L^2 (P)}^2 + 2 \lVert f -  g \rVert_{L^2 (P)}^2.
\end{equation*}
This again demonstrates why $f \in A$ should be close to $g$.

\section{Estimator Definition}

In this section, we consider the Ivanov-regularised estimators of Section \ref{sIvanov} in the context of RKHS regression. Recall the RKHS $H$ from the definition of our regression problem in Section \ref{sDef}. Let $B_H$ be the closed unit ball of $H$ and $r > 0$. The Ivanov-regularised least-squares estimator constrained to lie in $r B_H$ is
\begin{equation*}
\hat h_r = \argmin_{f \in r B_H} \frac{1}{n} \sum_{i = 1}^n (f(X_i) - Y_i)^2.
\end{equation*}
We also define $\hat h_0 = 0$. The next result shows that $\hat h_r$ is an SVM with regularisation parameter $\mu (r)$.

\begin{lemma} \label{lCalcEst}
Assume \ref{H}. Let $K$ be the $n \times n$ symmetric matrix with $K_{i, j} = k(X_i, X_j)$. Then $K$ is an $(\real^{n \times n}, \bor(\real^{n \times n}))$-valued measurable matrix on $(\Omega, \mathcal{F})$ and there exist an orthogonal matrix $A$ and a diagonal matrix $D$ which are both $(\real^{n \times n}, \bor(\real^{n \times n}))$-valued measurable matrices on $(\Omega, \mathcal{F})$ such that $K = A D A^{\trans}$. Furthermore, the diagonal entries of $D$ are non-negative and non-increasing. Let $m = \rk K$, which is a random variable on $(\Omega, \mathcal{F})$. For $r > 0$, if
\begin{equation*}
r^2 < \sum_{i = 1}^m D_{i, i}^{-1} (A^{\trans} Y)_i^2,
\end{equation*}
then define $\mu(r) > 0$ by
\begin{equation}
\sum_{i = 1}^m \frac{D_{i, i}}{(D_{i, i} + n \mu(r))^2} (A^{\trans} Y)_i^2 = r^2. \label{eMuDef}
\end{equation}
Otherwise, let $\mu(r) = 0$. We have that $\mu(r)$ is strictly decreasing when $\mu(r) > 0$, and $\mu(r)$ is measurable on $(\Omega \times (0, \infty), \mathcal{F} \otimes \bor((0, \infty)))$, where $r$ varies in $(0, \infty)$. Let $a \in \real^n$ be defined by
\begin{equation*}
(A^{\trans} a)_i = (D_{i, i} + n \mu(r))^{-1} (A^{\trans} Y)_i
\end{equation*}
for $1 \leq i \leq m$ and $(A^{\trans} a)_i = 0$ for $m+1 \leq i \leq n$, noting that $A^{\trans}$ has the inverse $A$ since it is an orthogonal matrix. For $r \geq 0$, we can uniquely define $\hat h_r$ by demanding that $\hat h_r \in \spn \{ k_{X_i} : 1 \leq i \leq n \}$. This gives
\begin{equation*}
\hat h_r = \sum_{i = 1}^n a_i k_{X_i}
\end{equation*}
for $r > 0$ and $\hat h_0 = 0$. We have that $\hat h_r$ is a $(H, \bor(H))$-valued measurable function on $(\Omega \times [0, \infty), \mathcal{F} \otimes \bor([0, \infty)))$, where $r$ varies in $[0, \infty)$.
\end{lemma}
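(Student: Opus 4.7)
The strategy is (i) to use the representer theorem to reduce the infinite-dimensional constrained least-squares problem to a finite-dimensional quadratic programme in coefficient space, (ii) to diagonalise the Gram matrix $K$ and solve the programme by Lagrangian/KKT analysis, and (iii) to verify the measurability claims by tracking how each object depends on the measurable inputs. The Gram matrix $K$ is measurable because $K_{i,j} = k(X_i, X_j)$ is a composition of measurable maps; its ordered eigenvalues, i.e.\ the diagonal entries of $D$, are continuous functions of $K$ by standard matrix perturbation theory and hence measurable. A measurable choice of the orthogonal diagonaliser $A$ is obtained by a measurable selection theorem (e.g.\ Kuratowski--Ryll-Nardzewski) applied to the closed-valued multifunction sending $K$ to its set of orthogonal diagonalisers respecting the prescribed eigenvalue ordering; the rank $m$ is the number of strictly positive entries of $D$, so it is a random variable.

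Next I would invoke the representer theorem. Setting $V = \spn\{k_{X_i} : 1 \leq i \leq n\}$ and writing $h = h_V + h^\perp$ with $h^\perp \in V^\perp$, the empirical loss depends only on $h_V$ because $h(X_j) = \langle h, k_{X_j} \rangle_H = h_V(X_j)$, while $\lVert h \rVert_H^2 = \lVert h_V \rVert_H^2 + \lVert h^\perp \rVert_H^2$ forces the minimiser into $V \cap r B_H$. Two elements of $V$ agreeing on all $X_j$ coincide, since their difference lies in $V \cap V^\perp = \{0\}$, giving uniqueness of $\hat h_r$. Writing $h = \sum_i a_i k_{X_i}$, the empirical loss and squared norm become $n^{-1} \lVert Ka - Y \rVert_2^2$ and $a^\trans K a$; the change of variables $b = A^\trans a$ and $\tilde Y = A^\trans Y$ turns the programme into minimising $\lVert D b - \tilde Y \rVert_2^2$ subject to $\sum_{i=1}^m D_{i,i} b_i^2 \leq r^2$, with the coordinates $b_i$ for $i > m$ entering neither the objective nor the constraint. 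Setting these to zero recovers the coefficient vector in the lemma.

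The reduced programme is convex, so KKT is necessary and sufficient. When $\sum_{i=1}^m D_{i,i}^{-1} \tilde Y_i^2 \leq r^2$ the norm constraint is inactive and $\mu(r) = 0$; otherwise stationarity $D((D + n\mu I) b - \tilde Y) = 0$ with $\mu > 0$ forces $b_i = (D_{i,i} + n\mu)^{-1} \tilde Y_i$ for $i \leq m$, and substituting into the active constraint gives (\ref{eMuDef}). The function $\mu \mapsto \sum_{i=1}^m D_{i,i}(D_{i,i} + n\mu)^{-2} \tilde Y_i^2$ is continuous and strictly decreasing on $(0, \infty)$, exceeds $r^2$ near $0$ under the stated hypothesis, and vanishes at infinity, giving existence and uniqueness of $\mu(r)$ together with its strict monotonicity in $r$. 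Joint measurability of $\mu(r)$ in $(\omega, r)$ follows by writing $\mu(r) = \inf\{ \mu \in \mathbb{Q}_{>0} : F(\omega, \mu) \leq r^2 \}$, where $F(\omega, \mu)$ denotes the aforementioned function; the infimum over a countable dense set of a jointly measurable, monotone, continuous function is measurable.

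Finally, $a = A b$ is jointly measurable in $(\omega, r)$, and the map $x \mapsto k_x \in H$ is measurable by Pettis's theorem (since $H$ is separable and every $x \mapsto \langle k_x, h \rangle_H = h(x)$ is measurable), so $\hat h_r = \sum_i a_i k_{X_i}$ inherits joint measurability into $(H, \bor(H))$. The main technical obstacle will be arranging the measurable spectral decomposition with the prescribed eigenvalue ordering, particularly on the measurable set where eigenvalues collide and spectral projectors behave badly; once that is secured, the remaining KKT analysis and measurability bookkeeping are routine.
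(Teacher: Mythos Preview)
Your proposal is correct and follows essentially the same route as the paper: representer-theorem reduction to a finite-dimensional constrained quadratic, Lagrangian/KKT analysis after diagonalising $K$, and then measurability bookkeeping for $\mu(r)$, $a$, and $\hat h_r$. The only noteworthy difference is that the paper obtains the measurable spectral decomposition by invoking a concrete result from the literature (essentially Theorem~2.1 of Quintana on measurable diagonalisation of positive-definite matrices, extended to the positive-semidefinite case by adding and subtracting the identity), whereas you appeal to a general measurable-selection principle; the cited result sidesteps precisely the eigenvalue-collision issue you flag and is the cleaner way to close that gap.
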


Let $r > 0$. There are multiple methods for calculating $\mu(r)$ to within a given tolerance $\eps > 0$. We call this value $\nu(r)$.

\subsection{Diagonalising $K$} \label{ssDiagK}

 Firstly, $\mu(r) = 0$ if and only if
\begin{equation*}
r \geq \left ( \sum_{i = 1}^m D_{i, i}^{-1} (A^{\trans} Y)_i^2 \right )^{1/2},
\end{equation*}
so in this case we set $\nu(r) = 0$. Otherwise, $\mu(r) > 0$ and
\begin{align*}
r^2 &= \sum_{i = 1}^m \frac{D_{i, i}}{(D_{i, i} + n \mu(r))^2} (A^{\trans} Y)_i^2 \\
&\leq n^{-2} \left ( \sum_{i = 1}^m D_{i, i} (A^{\trans} Y)_i^2 \right ) \mu(r)^{-2}.
\end{align*}
Hence,
\begin{equation}
\mu(r) \leq n^{-1} \left ( \sum_{i = 1}^m D_{i, i} (A^{\trans} Y)_i^2 \right )^{1/2} r^{-1}. \label{eMuBd}
\end{equation}
The function
\begin{equation*}
\sum_{i = 1}^m \frac{D_{i, i}}{(D_{i, i} + n \mu)^2} (A^{\trans} Y)_i^2
\end{equation*}
of $\mu \geq 0$ is continuous. Hence, we can calculate $\nu(r)$ using interval bisection on the interval with lower end point $0$ and upper end point equal to the right-hand side of \eqref{eMuBd}. We can then approximate $a$ by replacing $\mu(r)$ with $\nu(r)$ in the calculation of $a$ in Lemma \ref{lCalcEst}.

\subsection{Not Diagonalising $K$}

We can calculate an alternative $\nu(r)$ without diagonalising $K$. Note that if $\mu(r) > 0$, then \eqref{eMuDef} can be written as
\begin{equation*}
Y^{\trans} ( K + n \mu(r) I)^{-1} K ( K + n \mu(r) I)^{-1} Y = r^2.
\end{equation*}
Since $\mu(r)$ is strictly decreasing for $\mu(r) > 0$, we have
\begin{equation*}
r \geq \left (Y^{\trans} ( K + n \eps I)^{-1} K ( K + n \eps I)^{-1} Y \right )^{1/2}
\end{equation*}
if and only if $\mu(r) \in [0, \eps]$, so in this case we set $\nu(r) = \eps$. Otherwise, $\mu(r) > \eps$ and \eqref{eMuBd} can be written as
\begin{equation}
\mu(r) \leq n^{-1} (Y^{\trans} K Y)^{1/2} r^{-1}. \label{eMuBd2}
\end{equation}
The function
\begin{equation*}
Y^{\trans} ( K + n \mu I)^{-1} K ( K + n \mu I)^{-1} Y
\end{equation*}
of $\mu > 0$ is continuous. Hence, we can calculate $\nu(r)$ using interval bisection on the interval with lower end point $\eps$ and upper end point equal to the right-hand side of \eqref{eMuBd2}. When $\mu(r) > 0$ or $K$ is invertible, we can also calculate $a$ in Lemma \ref{lCalcEst} using $a = (K + n \mu(r) I)^{-1} Y$. Since $\nu(r) > 0$, we can approximate $a$ by $( K + n \nu(r) I)^{-1} Y$.

If we have that $K$ is invertible, then we can calculate the $\nu(r)$ in Subsection \ref{ssDiagK} while still not diagonalising $K$. We have $\mu(r) = 0$ if and only if $r \geq (Y^{\trans} K^{-1} Y)^{1/2}$, so in this case we set $\nu(r) = 0$. Otherwise, $\mu(r) > 0$ and \eqref{eMuBd} can be written as
\begin{equation*}
\mu(r) \leq n^{-1} (Y^{\trans} K Y)^{1/2} r^{-1},
\end{equation*}
so we can again use interval bisection to calculate $\nu(r)$.  We can still approximate $a$ by $(K + n \nu(r) I)^{-1} Y$.

\subsection{Approximating $\hat h_r$}

Having discussed how to approximate $\mu(r)$ by $\nu(r)$ to within a given tolerance $\eps > 0$, we now consider the estimator produced by this approximation. We find that this estimator is equal to $\hat h_s$ for some $s > 0$. We only have $\nu(r) = 0$ for the methods considered above when $\mu(r) = 0$, in which case we can let $s = r$ to obtain the approximate estimator $\hat h_s = \hat h_r$. Otherwise, $\nu(r) > 0$. Let 
\begin{equation*}
s = \left ( \sum_{i = 1}^m \frac{D_{i, i}}{(D_{i, i} + n \nu(r))^2} (A^{\trans} Y)_i^2 \right )^{1/2}.
\end{equation*}
By \eqref{eMuDef}, we have $\mu(s) = \nu(r)$ and the approximate estimator is equal to $\hat h_s$. Assume that $r$ is bounded away from 0 as $n \to \infty$ and let $C > 0$ be some constant not depending on $n$. We can ensure that $s$ is of the same order as $r$ as $n \to \infty$ by demanding that $s$ is within $C$ of $r$. This is enough to ensure that the orders of convergence for $\hat h_r$ apply to $\hat h_s$. In order to attain this value of $\nu(r)$, interval bisection should terminate at $x \in \real$ such that
\begin{equation*}
\left ( \sum_{i = 1}^m \frac{D_{i, i}}{(D_{i, i} + n x)^2} (A^{\trans} Y)_i^2 \right )^{1/2}
\end{equation*}
is within $C$ of $r$. Note that this guarantees $\lVert \hat h_s - \hat h_r \rVert_H \leq C^{1/2} (r + s)^{1/2}$ by Lemma \ref{lContEst}.

\section{Expectation Bounds}

To capture how well $g$ can be approximated by elements of $H$, we define
\begin{equation*}
I_2 (g, r) = \inf \left \{ \lVert h_r - g \rVert_{L^2 (P)}^2 : h_r \in r B_H \right \}
\end{equation*}
for $r > 0$. We consider the distance of $g$ from $r B_H$ because we constrain our estimator $\hat h_r$ to lie in this set. The supremum in \eqref{eSup1} with $A = r B_H$ can be controlled using the reproducing kernel property and the Cauchy--Schwarz inequality to obtain
\begin{equation*}
8 r \left ( \frac{1}{n^2} \sum_{i, j = 1}^n (Y_i - g(X_i)) (Y_j - g(X_j)) k(X_i, X_j) \right )^{1/2}.
\end{equation*}
The expectation of this quantity can be bounded using Jensen's inequality. Something very similar to this argument gives the first term of the bound in Theorem \ref{tBound} below. The expectation of the supremum in \eqref{eSup2} with $A = r B_H$ can be controlled using symmetrisation \citep[Lemma 2.3.1 of][]{van1996weak} to obtain
\begin{equation*}
2 \expect \left ( \sup_{f \in 2 r B_H} \left \lvert \frac{1}{n} \sum_{i = 1}^n \eps_i f(X_i)^2 \right \rvert \right ),
\end{equation*}
where the $\eps_i$ for $1 \leq i \leq n$ are \iid Rademacher random variables on $(\Omega, \mathcal{F}, \prob)$, independent of the $X_i$. Since $\lVert f \rVert_\infty \leq 2 \lVert k \lVert_\infty r$ for all $f \in 2 r B_H$, we can remove the squares on the $f(X_i)$ by using the contraction principle for Rademacher processes \citep[Theorem 3.2.1 of][]{gine2015mathematical}. This quantity can then be bounded in a similar way to the supremum in \eqref{eSup1}, giving the second term of the bound in Theorem \ref{tBound} below.

\begin{theorem} \label{tBound}
Assume \ref{Y1} and \ref{H}. Let $r > 0$. We have
\begin{equation*}
\expect \left (\lVert \hat h_r -  g \rVert_{L^2 (P)}^2 \right ) \leq \frac{8 \lVert k \rVert_\infty \sigma r}{n^{1/2}} + \frac{64 \lVert k \rVert_\infty^2 r^2}{n^{1/2}} + 10 I_2 (g, r).
\end{equation*}
\end{theorem}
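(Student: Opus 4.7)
The plan is to follow the blueprint described in Section \ref{sIvanov}. I apply Lemma \ref{lIneq} with $A = r B_H$ and an arbitrary $f \in r B_H$, combine it with the decomposition \eqref{eSup2} to move from $\lVert \hat h_r - f \rVert_{L^2 (P_n)}^2$ to $\lVert \hat h_r - f \rVert_{L^2 (P)}^2$, and use the triangle-type inequality $\lVert \hat h_r - g \rVert_{L^2 (P)}^2 \leq 2 \lVert \hat h_r - f \rVert_{L^2 (P)}^2 + 2 \lVert f - g \rVert_{L^2 (P)}^2$. Taking expectations and then the infimum over $f \in r B_H$ converts $\lVert f - g \rVert_{L^2 (P)}^2$ into $I_2 (g, r)$, producing the third summand of the claimed bound.

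For the noise cross-term $\frac{4}{n} \sum_{i=1}^n (Y_i - g(X_i)) (\hat h_r (X_i) - f(X_i))$ produced by Lemma \ref{lIneq}, I would use the reproducing property to rewrite it as $4 \langle \hat h_r - f, n^{-1} \sum_{i=1}^n (Y_i - g(X_i)) k_{X_i} \rangle_H$, apply Cauchy--Schwarz together with $\lVert \hat h_r - f \rVert_H \leq 2 r$, and invoke Jensen's inequality to reduce the expectation of the resulting $H$-norm to the square root of $\expect \lVert n^{-1} \sum_{i=1}^n (Y_i - g(X_i)) k_{X_i} \rVert_H^2$. Expanding this second moment into a double sum over $i,j$, the off-diagonal terms vanish because the samples are independent and $\expect(Y_i - g(X_i) \mid X_i) = 0$ almost surely, while the diagonal terms are controlled by \ref{Y1} and $k(x, x) \leq \lVert k \rVert_\infty^2$. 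This produces the first summand of the claimed bound.

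For the supremum in \eqref{eSup2} with $A = r B_H$, so that $f_1 - f_2$ ranges over $2 r B_H$, I would apply symmetrisation \citep[Lemma 2.3.1 of][]{van1996weak} to bound its expectation by twice a Rademacher average of $h(X_i)^2$ over $h \in 2 r B_H$, and then apply the contraction principle \citep[Theorem 3.2.1 of][]{gine2015mathematical} to $x \mapsto x^2$. Because $\lVert h \rVert_\infty \leq 2 r \lVert k \rVert_\infty$ for $h \in 2 r B_H$ by the reproducing property, this map has Lipschitz constant $4 r \lVert k \rVert_\infty$ on the relevant bounded interval and vanishes at zero, so the contraction principle reduces the quantity to a linear Rademacher average. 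This in turn is controlled by the same RKHS inner-product trick as in the previous paragraph, yielding $\expect \sup_{h \in 2 r B_H} \lvert n^{-1} \sum_{i=1}^n \eps_i h(X_i) \rvert \leq 2 r \lVert k \rVert_\infty / n^{1/2}$ and producing the second summand.

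The main technical obstacles are measurability of these empirical process suprema and the restriction required by the contraction principle. The suprema in \eqref{eSup1} and \eqref{eSup2} range over uncountable subsets of $H$, but separability of $H$ under assumption \ref{H}, combined with continuity of $f \mapsto f(X_i)$ on $B_H$, allows us to replace them by countable suprema and so obtain genuine random variables on $(\Omega, \mathcal{F})$. The contraction principle cannot be applied to $x \mapsto x^2$ on all of $\real$; it is precisely the boundedness of $k$ that confines us to a compact interval on which the Lipschitz constant is finite. Once these technical points are dispatched, combining the three bounds with the appropriate numerical constants yields Theorem \ref{tBound}.
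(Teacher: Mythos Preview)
Your proposal follows essentially the same route as the paper: Lemma \ref{lIneq} with $A = r B_H$, then the passage \eqref{eSup2} from $L^2(P_n)$ to $L^2(P)$ via symmetrisation and contraction (the paper's Lemma \ref{lSwapNorm}), then the triangle-type inequality and infimum over $f \in r B_H$. The second and third summands come out exactly as in the paper.

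There is one small constant slip in your handling of the noise cross-term. Bounding $\lVert \hat h_r - f \rVert_H \leq 2r$ before Cauchy--Schwarz gives $8 \lVert k \rVert_\infty \sigma r / n^{1/2}$ for $\expect \lVert \hat h_r - f \rVert_{L^2(P_n)}^2$, and after the final factor of $2$ this becomes $16 \lVert k \rVert_\infty \sigma r / n^{1/2}$, not the claimed $8 \lVert k \rVert_\infty \sigma r / n^{1/2}$. The paper recovers the sharper constant by splitting the cross-term as $\frac{4}{n}\sum (Y_i - g(X_i)) \hat h_r(X_i) - \frac{4}{n}\sum (Y_i - g(X_i)) f(X_i)$: since $f$ is deterministic, the second piece has expectation zero, and the first is bounded by the supremum over $r B_H$ (not $2r B_H$), giving $4 \lVert k \rVert_\infty \sigma r / n^{1/2}$ before doubling. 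This is purely cosmetic for the rates, but it is the reason the stated constant is $8$ rather than $16$.
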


We can obtain rates of convergence for our estimator $\hat h_r$ if we make an assumption about how well $g$ can be approximated by elements of $H$. Let us assume
\begin{enumerate}[label={($g$1)}]
\item \hfil $g \in [L^2 (P), H]_{\beta, \infty}$ with norm at most $B$ for $\beta \in (0, 1)$ and $B > 0$. \label{g1}
\end{enumerate}
The assumption \ref{g1}, together with Lemma \ref{lApproxInter}, give
\begin{equation}
I_2 (g, r) \leq \frac{B^{2/(1-\beta)}}{r^{ 2 \beta/(1-\beta)}} \label{eApproxInter}
\end{equation}
for $r > 0$. We obtain an expectation bound on the squared $L^2 (P)$ error of our estimator $\hat h_r$ of order $n^{-\beta/2}$.

\begin{theorem} \label{tInterBound}
Assume \ref{Y1}, \ref{H} and \ref{g1}. Let $r > 0$. We have
\begin{equation*}
\expect \left (\lVert \hat h_r -  g \rVert_{L^2 (P)}^2 \right ) \leq \frac{8 \lVert k \rVert_\infty \sigma r}{n^{1/2}} + \frac{64 \lVert k \rVert_\infty^2 r^2}{n^{1/2}} + \frac{10 B^{2/(1-\beta)}}{r^{ 2 \beta/(1-\beta)}}.
\end{equation*}
Let $D_1 > 0$. Setting
\begin{equation*}
r = D_1  \lVert k \rVert_\infty^{- (1-\beta)} B n^{(1-\beta)/4}
\end{equation*}
gives
\begin{equation*}
\expect \left (\lVert \hat h_r -  g \rVert_{L^2 (P)}^2 \right ) \leq D_2 \lVert k \rVert_\infty^{2 \beta} B^2 n^{- \beta/2} +D_3 \lVert k \rVert_\infty^\beta B \sigma n^{-(1+\beta)/4}
\end{equation*}
for constants $D_2, D_3 > 0$ depending only on $D_1$ and $\beta$.
\end{theorem}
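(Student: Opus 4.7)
The proof splits into the two displayed inequalities, and both follow by direct combination of results already established. The plan is to first substitute the interpolation approximation bound \eqref{eApproxInter} into Theorem \ref{tBound}, and then optimise over $r$.

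For the first inequality, I would apply Lemma \ref{lApproxInter} to the Banach pair $(L^2(P), H)$ with $z = g$. Assumption \ref{g1} gives $\lVert g \rVert_{\beta, \infty} \leq B$, so Lemma \ref{lApproxInter} produces
\begin{equation*}
I_2(g, r) = \inf \{ \lVert h_r - g \rVert_{L^2(P)}^2 : h_r \in rB_H \} \leq \frac{B^{2/(1-\beta)}}{r^{2\beta/(1-\beta)}},
\end{equation*}
which is exactly \eqref{eApproxInter}. Plugging this into the bound of Theorem \ref{tBound} gives the first claim with no further work.

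For the second claim, I would insert the prescribed $r = D_1 \lVert k \rVert_\infty^{-(1-\beta)} B n^{(1-\beta)/4}$ into each of the three terms of the first inequality and collect exponents. The choice of $r$ is motivated by balancing the deterministic term $64 \lVert k \rVert_\infty^2 r^2 / n^{1/2}$ against the approximation term $10 B^{2/(1-\beta)} / r^{2\beta/(1-\beta)}$: equating their orders (up to constants) yields $r \sim \lVert k \rVert_\infty^{-(1-\beta)} B n^{(1-\beta)/4}$, and both terms then contribute at the target rate $n^{-\beta/2}$. The remaining term $8 \lVert k \rVert_\infty \sigma r / n^{1/2}$, which is the only one depending on the noise parameter $\sigma$, evaluates to a constant multiple of $\lVert k \rVert_\infty^\beta B \sigma \, n^{(1-\beta)/4 - 1/2} = \lVert k \rVert_\infty^\beta B \sigma \, n^{-(1+\beta)/4}$. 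Combining these yields the stated bound with $D_3 = 8 D_1$ and $D_2 = 64 D_1^2 + 10 D_1^{-2\beta/(1-\beta)}$, both depending only on $D_1$ and $\beta$.

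There is no real obstacle in this proof: once Theorem \ref{tBound} and Lemma \ref{lApproxInter} are in hand, everything reduces to elementary algebraic manipulation of exponents. The only minor care needed is to keep track of the exponents $2/(1-\beta)$ and $2\beta/(1-\beta)$ when simplifying the approximation term, since $2/(1-\beta) - 2\beta/(1-\beta) = 2$ cleanly produces the $B^2$ factor. Note also that $(1+\beta)/4 > \beta/2$ for $\beta \in (0,1)$, so the $\sigma$-term is of strictly faster order than the $n^{-\beta/2}$ term; the authors keep it separate because it has a different dependence on the noise level.
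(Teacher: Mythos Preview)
Your proposal is correct and follows exactly the paper's approach: combine Theorem \ref{tBound} with the approximation bound \eqref{eApproxInter} from Lemma \ref{lApproxInter}, then substitute the prescribed $r$ and simplify. Your explicit constant $D_2 = 64 D_1^2 + 10 D_1^{-2\beta/(1-\beta)}$ is in fact the right one; the paper's proof records $64 D_1$, evidently a typo, since the discussion immediately following the theorem (with $D_1 = (5\beta/(32(1-\beta)))^{(1-\beta)/2}$) agrees with your value.
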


Since we must let $r \to \infty$ for the initial bound in Theorem \ref{tInterBound} to tend to $0$, the second term of the initial bound is asymptotically larger than the first. If we ignore the first term and minimise the second and third terms over $r > 0$, we get
\begin{equation*}
r = \left ( \frac{5 \beta}{32 (1-\beta)} \right )^{(1-\beta)/2} \lVert k \rVert_\infty^{- (1-\beta)} B n^{(1-\beta)/4}.
\end{equation*}
In particular, $r$ is of the form in Theorem \ref{tInterBound}. This choice of $r$ gives
\begin{equation*}
D_2 = 64 \left ( \frac{5 \beta}{32 (1-\beta)} \right )^{1-\beta} + 10 \left ( \frac{32 (1-\beta)}{5 \beta} \right )^\beta \text{ and } D_3 = 8 \left ( \frac{5 \beta}{32 (1-\beta)} \right )^{(1-\beta)/2}.
\end{equation*}

The fact that the second term of the initial bound is larger than the first produces some interesting observations. Firstly, the choice of $r$ above does not depend on $\sigma^2$. Secondly, we can decrease the bound if we can find a way to reduce the second term, without having to alter the other terms. The increased size of the second term is due to the fact that the bound on $f \in 2 r B_H$ is given by $\lVert f \rVert_\infty \leq 2 \lVert k \lVert_\infty r$ when applying the contraction principle for Rademacher processes. If we can use a bound which does not depend on $r$, then we can reduce the size of the second term.

We now also assume
\begin{enumerate}[label={($g$2)}]
\item \hfil $\lVert g \rVert_\infty \leq C$ for $C > 0$ \label{g2}
\end{enumerate}
and clip our estimator. Let $r > 0$. Since $g$ is bounded in $[- C, C]$, we can make $\hat h_r$ closer to $g$ by constraining it to lie in the same interval. Similarly to Chapter 7 of \citet{steinwart2008support} and \citet{steinwart2009optimal}, we define the projection $V : \real \to [-C, C]$ by
\begin{equation*}
V(t) =\left \{
\begin{array}{lll}
- C & \text{if} &t < - C \\
t & \text{if} &\lvert t \rvert \leq C \\
C & \text{if} &t > C
\end{array}
\right .
\end{equation*}
for $t \in \real$. We can apply the inequality
\begin{equation*}
\lVert V \hat h_r - V h_r \rVert_{L^2 (P_n)}^2 \leq \lVert \hat h_r - h_r \Vert_{L^2 (P_n)}^2
\end{equation*}
for all $h_r \in r B_H$. We continue analysing $V \hat h_r$ by bounding
\begin{equation*}
\sup_{f_1, f_2 \in r B_H} \left \lvert \lVert V f_1 - V f_2 \rVert_{L^2 (P_n)}^2 - \lVert V f_1 - V f_2 \rVert_{L^2 (P)}^2 \right \rvert.
\end{equation*}
The expectation of the supremum can be bounded in the same way as before, with some adjustments. After symmetrisation, we can remove the squares on the $V f_1 (X_i) - V f_2 (X_i)$ for $f_1, f_2 \in r B_H$ and $1 \leq i \leq n$ by using the contraction principle for Rademacher processes with $\lVert V f_1 - V f_2 \rVert_\infty \leq 2 C$. We can then use the triangle inequality to remove $V f_2 (X_i)$, before applying the contraction principle again to remove $V$. The expectation bound on the squared $L^2 (P)$ error of our estimator $V \hat h_r$ follows in the same way as before.

\begin{theorem} \label{tBddBound}
Assume \ref{Y1}, \ref{H} and \ref{g2}. Let $r > 0$. We have
\begin{equation*}
\expect \left (\lVert V \hat h_r -  g \rVert_{L^2 (P)}^2 \right ) \leq \frac{8 \lVert k \rVert_\infty (16 C + \sigma) r}{n^{1/2}} + 10 I_2 (g, r).
\end{equation*}
\end{theorem}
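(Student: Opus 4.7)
The plan is to track the projection $V$ carefully through the same scheme already used for Theorem \ref{tBound}, exploiting two features of $V$. First, since $V$ is 1-Lipschitz and fixes $[-C, C]$, we have the pointwise inequalities $\lvert V \hat h_r - V h_r\rvert \leq \lvert \hat h_r - h_r\rvert$ and, using \ref{g2}, $\lvert V h_r - g\rvert \leq \lvert h_r - g\rvert$ for every $h_r \in rB_H$. Second, $\lVert V f_1 - V f_2\rVert_\infty \leq 2C$ for all $f_1, f_2 \in rB_H$, and this uniform bound replaces the $2\lVert k\rVert_\infty r$ bound used in the sketch of Theorem \ref{tBound}. It is this replacement that turns the $r^2$ term of Theorem \ref{tBound} into the $Cr$ term of the present theorem.

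I would begin by fixing $h_r \in rB_H$ and writing
\begin{equation*}
\lVert V \hat h_r - g\rVert_{L^2 (P)}^2 \leq 2 \lVert V \hat h_r - V h_r\rVert_{L^2 (P)}^2 + 2 \lVert h_r - g\rVert_{L^2 (P)}^2,
\end{equation*}
using the second pointwise inequality above to discard the outer $V$ in the approximation term. For the first term I would pass to the empirical norm,
\begin{equation*}
\lVert V \hat h_r - V h_r\rVert_{L^2 (P)}^2 \leq \lVert V \hat h_r - V h_r\rVert_{L^2 (P_n)}^2 + \Sigma,
\end{equation*}
where $\Sigma = \sup_{f_1, f_2 \in rB_H} \lvert \lVert V f_1 - V f_2\rVert_{L^2 (P_n)}^2 - \lVert V f_1 - V f_2\rVert_{L^2 (P)}^2\rvert$. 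The first term on the right is at most $\lVert \hat h_r - h_r\rVert_{L^2 (P_n)}^2$ by the first pointwise inequality, and Lemma \ref{lIneq} with $A = rB_H$ and $f = h_r$ then dominates it by a noise cross-term $\frac{4}{n}\sum_{i=1}^n (Y_i - g(X_i))(\hat h_r(X_i) - h_r(X_i))$ plus $4 \lVert h_r - g\rVert_{L^2 (P_n)}^2$.

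Taking expectations, the noise cross-term is handled exactly as in the sketch preceding Theorem \ref{tBound}: bound it by the supremum in \eqref{eSup1}, use the reproducing kernel identity to write $f_1(X_i) - f_2(X_i) = \langle f_1 - f_2, k_{X_i}\rangle_H$, apply Cauchy--Schwarz to pull out $2r$, and use Jensen's inequality on the remaining RKHS norm; the off-diagonal contributions vanish under \ref{Y1} by independence of the $(X_i, Y_i)$ and the diagonal contributions are bounded by $\lVert k\rVert_\infty^2 \sigma^2$. This produces the $\sigma r$ piece. The expectation of $\Sigma$ follows the contraction chain described in the paper: symmetrisation (Lemma 2.3.1 of \citet{van1996weak}) inserts Rademacher variables, the contraction principle of \citet{gine2015mathematical} applied to $t \mapsto t^2$ on $[-2C, 2C]$ (Lipschitz constant $4C$) strips the squares, the triangle inequality splits the pair supremum into a one-function supremum, and a second application of the contraction principle removes $V$ (which is 1-Lipschitz with $V(0) = 0$); the surviving Rademacher complexity of $rB_H$ is at most $r \lVert k\rVert_\infty n^{-1/2}$ by the same reproducing kernel and Jensen argument. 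This produces the $Cr$ piece. Finally, $\expect \lVert h_r - g\rVert_{L^2 (P_n)}^2 = \lVert h_r - g\rVert_{L^2 (P)}^2$ collapses the two approximation terms (with coefficients $8$ and $2$) into a single $10 \lVert h_r - g\rVert_{L^2 (P)}^2$, and taking the infimum over $h_r \in rB_H$ yields $10\, I_2(g, r)$.

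The main technical obstacle is the bookkeeping of constants across the two contraction steps and the triangle-inequality split, which must combine to produce precisely the factor $16$ multiplying $C$ in the final bound; the correct choice of the Ledoux--Talagrand constant and of how the factor $4C$ from the quadratic contraction is split between the two halves of the pair supremum is delicate. Measurability of $\Sigma$ and of the noise supremum is not problematic, since $rB_H$ is separable and pointwise evaluations are continuous on $H$, so each supremum reduces to one over a countable dense subset exactly as noted in Section \ref{sIvanov}.
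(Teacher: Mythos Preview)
Your proposal is correct and follows essentially the same route as the paper: fix $h_r\in rB_H$, use the Lipschitz property of $V$ and \ref{g2} to reduce to $\lVert V\hat h_r - Vh_r\rVert_{L^2(P)}^2$, pass to the empirical norm, bound the resulting uniform deviation via symmetrisation, two contraction steps and a triangle-inequality split (this is exactly Lemma \ref{lBddSwapNorm}, yielding $64\lVert k\rVert_\infty Cr/n^{1/2}$), and bound the empirical part via Lemma \ref{lIneq} (this is Lemma \ref{lEmpNorm}).

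One small sharpening the paper uses that your sketch does not: for the noise cross-term $\frac{4}{n}\sum_i (Y_i-g(X_i))(\hat h_r(X_i)-h_r(X_i))$, the paper does \emph{not} pass to the two-function supremum \eqref{eSup1}. Instead it splits the sum into the $\hat h_r$ piece and the $h_r$ piece; the latter has expectation zero because $h_r$ is fixed, and only the former is bounded by the one-function supremum over $rB_H$, giving $4\lVert k\rVert_\infty\sigma r/n^{1/2}$ rather than the $8\lVert k\rVert_\infty\sigma r/n^{1/2}$ your ``pull out $2r$'' step would yield. After the final doubling this is exactly what is needed to land on the coefficient $8$ in front of $\sigma$ in the stated bound; with your route you would get $16\sigma$ there. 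Otherwise your constant accounting for the $C$ term is spot on: $2\cdot(4C)\cdot 2\cdot 2\cdot 2\cdot(\lVert k\rVert_\infty r/n^{1/2}) = 64\lVert k\rVert_\infty Cr/n^{1/2}$, and then $2\cdot 64 = 8\cdot 16$.
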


We can obtain rates of convergence for our estimator $V \hat h_r$ by again assuming \ref{g1}. We obtain an expectation bound on the squared $L^2 (P)$ error of $V \hat h_r$ of order $n^{-\beta/(1+\beta)}$.

\begin{theorem} \label{tBddInterBound}
Assume \ref{Y1}, \ref{H}, \ref{g1} and \ref{g2}. Let $r > 0$. We have
\begin{equation*}
\expect \left (\lVert V \hat h_r -  g \rVert_{L^2 (P)}^2 \right ) \leq \frac{8 \lVert k \rVert_\infty (16 C + \sigma) r}{n^{1/2}} + \frac{10 B^{2/(1-\beta)}}{r^{ 2 \beta/(1-\beta)}}.
\end{equation*}
Let $D_1 > 0$. Setting
\begin{equation*}
r = D_1 \lVert k \rVert_\infty^{- (1-\beta)/(1+\beta)} B^{2/(1+\beta)} (16 C + \sigma)^{- (1-\beta)/(1+\beta)} n^{(1-\beta)/(2 (1+\beta))}
\end{equation*}
gives
\begin{equation*}
\expect \left (\lVert V \hat h_r -  g \rVert_{L^2 (P)}^2 \right ) \leq D_2 \lVert k \rVert_\infty^{2 \beta/(1+\beta)} B^{2/(1+\beta)} (16 C + \sigma)^{2 \beta/(1+\beta)} n^{- \beta/(1+\beta)}
\end{equation*}
for a constant $D_2 > 0$ depending only on $D_1$ and $\beta$.
\end{theorem}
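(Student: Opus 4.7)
The plan is to use Theorem \ref{tBddBound} as a black box and then reduce the statement to a one-variable optimisation problem in $r$.

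First, I would combine Theorem \ref{tBddBound} with assumption \ref{g1}. The theorem gives $\expect(\lVert V \hat h_r - g \rVert_{L^2 (P)}^2) \leq 8 \lVert k \rVert_\infty (16 C + \sigma) r / n^{1/2} + 10 I_2 (g, r)$, and the interpolation assumption, via Lemma \ref{lApproxInter} as recorded in \eqref{eApproxInter}, yields $I_2 (g, r) \leq B^{2/(1 - \beta)} / r^{2 \beta / (1 - \beta)}$. Substituting this into the bound immediately produces the first displayed inequality of the theorem. So the first half of the statement is essentially free.

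The second half is a bias--variance balance. Writing the bound in the schematic form $A r + B r^{-\alpha}$ with $A = 8 \lVert k \rVert_\infty (16 C + \sigma) n^{-1/2}$, $B = 10 B^{2/(1 - \beta)}$ and $\alpha = 2 \beta/(1 - \beta)$, I would not insist on the exact minimiser; any $r$ with $r^{1 + \alpha} = r^{(1+\beta)/(1-\beta)}$ proportional to $B/A \propto B^{2/(1-\beta)} \lVert k \rVert_\infty^{-1} (16 C + \sigma)^{-1} n^{1/2}$ makes the two summands of the same order. Raising to the power $(1 - \beta)/(1 + \beta)$ gives exactly the form $r = D_1 \lVert k \rVert_\infty^{-(1-\beta)/(1+\beta)} B^{2/(1+\beta)} (16C + \sigma)^{-(1-\beta)/(1+\beta)} n^{(1-\beta)/(2(1+\beta))}$ stated in the theorem, and $D_1 > 0$ is left free so any positive value is admissible.

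Next I would substitute this $r$ back into both terms and verify the exponents. For the first term $A r$, the exponents of $\lVert k \rVert_\infty$ and $(16 C + \sigma)$ become $1 - (1-\beta)/(1+\beta) = 2\beta/(1+\beta)$, the exponent of $B$ is $2/(1+\beta)$, and the exponent of $n$ is $(1-\beta)/(2(1+\beta)) - 1/2 = -\beta/(1+\beta)$. For the second term $B r^{-\alpha}$, the exponents of $\lVert k \rVert_\infty$ and $(16 C + \sigma)$ are $(1-\beta)/(1+\beta) \cdot 2\beta/(1-\beta) = 2\beta/(1+\beta)$; the exponent of $B$ is $2/(1-\beta) - 2/(1+\beta) \cdot 2\beta/(1-\beta) = 2/(1+\beta)$; the exponent of $n$ is $-(1-\beta)/(2(1+\beta)) \cdot 2\beta/(1-\beta) = -\beta/(1+\beta)$. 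The two terms therefore combine into $D_2 \lVert k \rVert_\infty^{2\beta/(1+\beta)} B^{2/(1+\beta)} (16 C + \sigma)^{2\beta/(1+\beta)} n^{-\beta/(1+\beta)}$ with $D_2 = 8 D_1 + 10 D_1^{-2\beta/(1-\beta)}$, which depends only on $D_1$ and $\beta$.

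There is no real obstacle here; the argument is a routine plug-in followed by careful bookkeeping of exponents. The only thing to be vigilant about is that the choice of $r$ is not required to be the exact minimiser, so the stated form with an arbitrary $D_1 > 0$ is attained, and the resulting constant $D_2$ absorbs both terms without introducing dependence on $\lVert k \rVert_\infty$, $B$, $C$, $\sigma$ or $n$.
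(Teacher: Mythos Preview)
Your proposal is correct and follows essentially the same approach as the paper: invoke Theorem \ref{tBddBound}, replace $I_2(g,r)$ by the interpolation bound \eqref{eApproxInter}, and then substitute the stated choice of $r$ to balance the two terms. You even arrive at the same explicit constant $D_2 = 8 D_1 + 10 D_1^{-2\beta/(1-\beta)}$ as the paper.
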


If we minimise the initial bound in Theorem \ref{tBddInterBound} over $r > 0$, we get
\begin{equation*}
r = \left ( \frac{5 \beta}{2 (1-\beta)} \right )^{(1-\beta)/(1+\beta)} \lVert k \rVert_\infty^{- (1-\beta)/(1+\beta)} B^{2/(1+\beta)} (16 C + \sigma)^{- (1-\beta)/(1+\beta)} n^{(1-\beta)/(2 (1+\beta))}.
\end{equation*}
In particular, $r$ is of the form in Theorem \ref{tBddInterBound}. This choice of $r$ gives
\begin{equation*}
D_2 = 2 \cdot 5^{(1-\beta)/(1+\beta)} \cdot 4^{2 \beta/(1+\beta)} \left ( \left ( \frac{2 \beta}{1 - \beta} \right )^{(1-\beta)/(1+\beta)} + \left ( \frac{1-\beta}{2 \beta} \right )^{2 \beta/(1+\beta)} \right ).
\end{equation*}

Although the second bound in Theorem \ref{tBddInterBound} is of theoretical interest, it is in practice impossible to select $r$ of the correct order in $n$ for the bound to hold without knowing $\beta$. Since assuming that we know $\beta$ is not realistic, we must use some other method for determining a good choice of $r$.

\subsection{Validation}

Suppose that we have an independent second data set $(\tilde X_i, \tilde Y_i)$ for $1 \leq i \leq \tilde n$ which are $(S \times \real, \mathcal{S} \otimes \bor(\real))$-valued random variables on the probability space $(\Omega, \mathcal{F}, \prob)$. Let the $(\tilde X_i, \tilde Y_i)$ be \iid with $\tilde X_i \sim P$ and $\expect(\tilde Y_i \vert \tilde X_i) = g(\tilde X_i)$ almost surely. Let $\rho \geq 0$ and $R \subs [0, \rho]$ be non-empty and compact. Furthermore, let $F = \{ V \hat h_r : r \in R \}$. We estimate a value of $r$ which makes the squared $L^2 (P)$ error of $V \hat h_r$ small by
\begin{equation*}
\hat r = \argmin_{r \in R} \frac{1}{\tilde n} \sum_{i = 1}^{\tilde n} (V \hat h_r (\tilde X_i) - \tilde Y_i)^2.
\end{equation*}
The minimum is attained because Lemma \ref{lContEst} shows that it is the minimum of a continuous function over a compact set. In the event of ties, we may take $\hat r$ to be the infimum of all points attaining the minimum. Lemma \ref{lValMeasEst} shows that the estimator $\hat r$ is a random variable on $(\Omega, \mathcal{F})$. Hence, by Lemma \ref{lCalcEst}, $\hat h_{\hat r}$ is a $(H, \bor(H))$-valued random variable on $(\Omega, \mathcal{F})$.

The definition of $\hat r$ means that we can analyse $V \hat h_{\hat r}$ using Lemma \ref{lIneq}. The expectation of the supremum in \eqref{eSup1} with $A = F$ can be bounded using chaining \citep[Theorem 2.3.7 of][]{gine2015mathematical}. The diameter of $(F, \lVert \cdot \rVert_\infty)$ is $2 C$, which is an important bound for the use of chaining. Hence, this form of analysis can only be performed under the assumption \ref{g2}. After symmetrisation, the expectation of the supremum in \eqref{eSup2} with $A = F$ can be bounded in the same way. In order to perform chaining, we need to make an assumption on the behaviour of the errors of the response variables $\tilde Y_i$ for $1 \leq i \leq \tilde n$. Let $U$ and $V$ be random variables on $(\Omega, \mathcal{F}, \prob)$. We say $U$ is $\sigma^2$-subgaussian if
\begin{equation*}
\expect(\exp(t U)) \leq \exp(\sigma^2 t^2 / 2)
\end{equation*}
for all $t \in \real$. We say $U$ is $\sigma^2$-subgaussian given $V$ if
\begin{equation*}
\expect(\exp(t U) \vert V) \leq \exp(\sigma^2 t^2 / 2)
\end{equation*}
almost surely for all $t \in \real$. We assume
\begin{enumerate}[label={($\tilde Y$)}]
\item \hfil $\tilde Y_i - g(\tilde X_i)$ is $\tilde \sigma^2$-subgaussian given $\tilde X_i$ for $1 \leq i \leq \tilde n$. \label{tY}
\end{enumerate}
This is stronger than the equivalent of the assumption \ref{Y1}, that $\vary(\tilde Y_i \vert \tilde X_i) \leq \tilde \sigma^2$ almost surely.

\begin{theorem} \label{tValBound}
Assume \ref{H} and \ref{tY}. Let $r_0 \in R$. We have
\begin{equation*}
\expect \left (\lVert V \hat h_{\hat r} -  g \rVert_{L^2 (P)}^2 \right )
\end{equation*}
is at most
\begin{equation*}
\frac{32 C (4 C + \tilde \sigma)}{\tilde n^{1/2}} \left ( \left ( 2 \log \left (2 + \frac{\lVert k \rVert_\infty^2 \rho^2}{C^2} \right ) \right )^{1/2} + \pi^{1/2} \right ) + 10 \expect \left ( \lVert V \hat h_{r_0} - g \rVert_{L^2 (P)}^2 \right ).
\end{equation*}
\end{theorem}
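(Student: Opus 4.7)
The plan is to apply the general strategy of Section \ref{sIvanov} to the validation problem, treating the training data as given so that $F = \{V\hat h_r : r \in R\}$ is a deterministic subset of $H$ with $L^\infty$ diameter at most $2C$. Since $V\hat h_{\hat r}$ minimises the validation empirical risk over $F$, Lemma \ref{lIneq} applied with $(\tilde X_i, \tilde Y_i)$, $\tilde P_{\tilde n}$, $A = F$ and comparison function $V\hat h_{r_0} \in F$ yields
\begin{equation*}
\lVert V \hat h_{\hat r} - V \hat h_{r_0} \rVert_{L^2(\tilde P_{\tilde n})}^2 \leq \frac{4}{\tilde n}\sum_{i=1}^{\tilde n}(\tilde Y_i - g(\tilde X_i))(V\hat h_{\hat r}(\tilde X_i) - V\hat h_{r_0}(\tilde X_i)) + 4\lVert V\hat h_{r_0} - g\rVert_{L^2(\tilde P_{\tilde n})}^2.
\end{equation*}
Passing from $L^2(\tilde P_{\tilde n})$ to $L^2(P)$ on the left via an inequality of the form \eqref{eSup2}, and then using $\lVert V\hat h_{\hat r} - g\rVert_{L^2(P)}^2 \leq 2\lVert V\hat h_{\hat r} - V\hat h_{r_0}\rVert_{L^2(P)}^2 + 2\lVert V\hat h_{r_0} - g\rVert_{L^2(P)}^2$, reduces matters to two expected-supremum terms over $F$ together with the oracle term. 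On taking expectations, the contributions $\expect\lVert V\hat h_{r_0} - g\rVert_{L^2(\tilde P_{\tilde n})}^2$ and $\expect\lVert V\hat h_{r_0} - g\rVert_{L^2(P)}^2$ coincide by Fubini, using independence of the training and validation samples.

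The two expected suprema are handled conditional on the training data, so that $F$ is fixed. For the noise term
\begin{equation*}
\expect\sup_{f_1, f_2 \in F}\frac{1}{\tilde n}\sum_{i=1}^{\tilde n}(\tilde Y_i - g(\tilde X_i))(f_1 - f_2)(\tilde X_i),
\end{equation*}
conditioning further on the $\tilde X_i$ produces a centred sub-Gaussian process on $F \times F$ with increment metric proportional to $\tilde\sigma \tilde n^{-1/2}\lVert \cdot \rVert_{L^2(\tilde P_{\tilde n})}$ by assumption \ref{tY}, to which the Dudley chaining bound (Theorem 2.3.7 of \citet{gine2015mathematical}) applies. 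For the deviation term $\sup_{f_1, f_2 \in F}\lvert\lVert f_1 - f_2\rVert_{L^2(\tilde P_{\tilde n})}^2 - \lVert f_1 - f_2\rVert_{L^2(P)}^2\rvert$, I would first symmetrise (Lemma 2.3.1 of \citet{van1996weak}) and then apply the contraction principle for Rademacher processes to $u \mapsto u^2$ on $[-2C, 2C]$ (Lipschitz constant $4C$), reducing to a Rademacher process in $f_1 - f_2$ that is handled by the same chaining bound. The $\tilde\sigma$ factor from the noise term and the $4C$ factor generated by the contraction assemble into the pre-factor $4C + \tilde\sigma$.

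The main obstacle is the chaining estimate itself: converting the continuity of $r \mapsto \hat h_r$ given by Lemma \ref{lContEst} into an $L^\infty$-covering estimate with the specific logarithmic dependence $\log(2 + \lVert k\rVert_\infty^2 \rho^2/C^2)$. Because $\hat h_r \in r B_H \subseteq \rho B_H$ for every $r \in R \subseteq [0, \rho]$, the set $F$ is the image of a one-dimensional compact parameter set in $\rho B_H$, and the inequality $\lVert h \rVert_\infty \leq \lVert k \rVert_\infty \lVert h \rVert_H$ supplies a covering estimate of $F$ in $L^\infty$ in terms of a partition of $R$. Integrating $\sqrt{\log N(F, \lVert\cdot\rVert_\infty, \varepsilon)}$ over $\varepsilon \in [0, 2C]$ then produces the factor $\sqrt{2\log(2 + \lVert k\rVert_\infty^2 \rho^2/C^2)}$, while the additive $\sqrt\pi$ arises from standard Gaussian-integral constants in the chaining inequality. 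Assembling the constants gives the stated pre-factor $32C(4C+\tilde\sigma)/\tilde n^{1/2}$ together with the $10\expect\lVert V\hat h_{r_0} - g\rVert_{L^2(P)}^2$ oracle contribution.
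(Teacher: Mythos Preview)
Your proposal is correct and follows essentially the same route as the paper: Lemma \ref{lIneq} applied to the validation sample with $A=F$, Dudley chaining for the subgaussian noise term (the paper's Lemma \ref{lGaussProc}), symmetrisation plus the Rademacher contraction principle for the $L^2(\tilde P_{\tilde n})$--$L^2(P)$ deviation (the paper's Lemma \ref{lValSwapNorm}), and the $L^\infty$ covering estimate coming from Lemma \ref{lContEst} (worked out in Lemmas \ref{lCoverNum} and \ref{lCoverInt}). Two minor remarks: $F$ consists of clipped functions and is a subset of $L^\infty$, not of $H$; and the paper fixes the reference point $f_0 = V\hat h_{r_0}$ (so the process vanishes there and the chaining bound has no base-point term) rather than taking suprema over $F\times F$, which is what makes the stated constants come out exactly.
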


In order for us to apply the validation result in Theorem \ref{tValBound} to the initial bound in Theorem \ref{tBddInterBound}, we need to make an assumption on $R$. We assume either
\begin{enumerate}[label={($R$1)}]
\item \hfil $R = [0, \rho]$ for $\rho = a n^{1/2}$ and $a > 0$ \label{R1}
\end{enumerate}
or
\begin{enumerate}[label={($R$2)}]
\item \hfil $R = \{ b i : 0 \leq i \leq I - 1 \} \cup \{ a n^{1/2} \}$ and $\rho = a n^{1/2}$ for $a, b > 0$ and $I = \lceil a n^{1/2} / b \rceil$. \label{R2}
\end{enumerate}
The assumption \ref{R1} is mainly of theoretical interest and would make it difficult to calculate $\hat r$ in practice. The estimator $\hat r$ can be computed under the assumption \ref{R2}, since in this case $R$ is finite. We obtain an expectation bound on the squared $L^2 (P)$ error of $V \hat h_{\hat r}$ of order $n^{-\beta/(1+\beta)}$. This is the same order in $n$ as the second bound in Theorem \ref{tBddInterBound}.

\begin{theorem} \label{tValInterBound}
Assume \ref{Y1}, \ref{H}, \ref{g1}, \ref{g2} and \ref{tY}. Also assume \ref{R1} or \ref{R2} and that $\tilde n$ increases at least linearly in $n$. We have
\begin{equation*}
\expect \left (\lVert V \hat h_{\hat r} -  g \rVert_{L^2 (P)}^2 \right ) \leq D_1 n^{- \beta/(1+\beta)}
\end{equation*}
for a constant $D_1 > 0$ not depending on $n$ or $\tilde n$.
\end{theorem}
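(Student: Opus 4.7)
The plan is to combine Theorem \ref{tValBound} with Theorem \ref{tBddInterBound} by a suitable choice of $r_0 \in R$ and then verify that every term on the right-hand side of Theorem \ref{tValBound} is $O(n^{-\beta/(1+\beta)})$. First, I apply Theorem \ref{tValBound} (whose hypotheses \ref{H} and \ref{tY} are in force) to obtain
\begin{equation*}
\expect \bigl(\lVert V \hat h_{\hat r} - g \rVert_{L^2 (P)}^2\bigr) \leq E_{\mathrm{val}}(n, \tilde n, \rho) + 10 \expect\bigl(\lVert V \hat h_{r_0} - g \rVert_{L^2 (P)}^2\bigr),
\end{equation*}
where $E_{\mathrm{val}}(n, \tilde n, \rho)$ denotes the first summand in the statement of Theorem \ref{tValBound}. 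Since $\rho = a n^{1/2}$ under both \ref{R1} and \ref{R2}, the logarithm satisfies $\log(2 + \lVert k\rVert_\infty^2 \rho^2/C^2) = O(\log n)$, and the hypothesis $\tilde n \geq c n$ for some $c > 0$ gives $\tilde n^{1/2} \geq c^{1/2} n^{1/2}$. Hence $E_{\mathrm{val}}(n, \tilde n, \rho) = O((\log n)^{1/2} n^{-1/2})$, and since $\beta < 1$ implies $\beta/(1+\beta) < 1/2$, this term is in fact $o(n^{-\beta/(1+\beta)})$, so it is absorbed into any constant multiple of $n^{-\beta/(1+\beta)}$ for large $n$.

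Next I select $r_0 \in R$ close to the optimal radius
\begin{equation*}
r^* = D_1 \lVert k \rVert_\infty^{-(1-\beta)/(1+\beta)} B^{2/(1+\beta)} (16 C + \sigma)^{-(1-\beta)/(1+\beta)} n^{(1-\beta)/(2(1+\beta))}
\end{equation*}
from Theorem \ref{tBddInterBound}. Because $(1-\beta)/(2(1+\beta)) < 1/2$, we have $r^* \leq a n^{1/2} = \rho$ for all sufficiently large $n$. Under \ref{R1} one simply takes $r_0 = r^*$ and applies Theorem \ref{tBddInterBound} directly to get $\expect(\lVert V\hat h_{r_0} - g\rVert_{L^2(P)}^2) \leq D_2 \lVert k\rVert_\infty^{2\beta/(1+\beta)} B^{2/(1+\beta)} (16C+\sigma)^{2\beta/(1+\beta)} n^{-\beta/(1+\beta)}$. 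Under \ref{R2} I take $r_0$ to be any grid point in $\{b i : 0 \leq i \leq I-1\}$ with $|r_0 - r^*| \leq b$, which exists for $n$ large since $r^* \to \infty$ and the spacing $b$ is fixed; for such $n$ we also have $r_0 \geq r^*/2$.

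The remaining task is to show $\expect(\lVert V\hat h_{r_0} - g\rVert_{L^2(P)}^2) = O(n^{-\beta/(1+\beta)})$ under \ref{R2}. I do not use the optimised form of Theorem \ref{tBddInterBound} but instead its initial inequality
\begin{equation*}
\expect\bigl(\lVert V\hat h_{r_0} - g\rVert_{L^2(P)}^2\bigr) \leq \frac{8\lVert k\rVert_\infty (16C+\sigma) r_0}{n^{1/2}} + \frac{10 B^{2/(1-\beta)}}{r_0^{2\beta/(1-\beta)}}.
\end{equation*}
Since $r_0 \leq r^* + b$, the first summand is at most a constant plus $O(r^*/n^{1/2}) = O(n^{-\beta/(1+\beta)})$, using $r^*/n^{1/2} = \Theta(n^{(1-\beta)/(2(1+\beta)) - 1/2}) = \Theta(n^{-\beta/(1+\beta)})$. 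Since $r_0 \geq r^*/2$, the second summand is at most $2^{2\beta/(1-\beta)}$ times the value of the second term of Theorem \ref{tBddInterBound} at $r = r^*$, hence also $O(n^{-\beta/(1+\beta)})$.

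The main technical obstacle is the coarseness of the grid in \ref{R2} for small $n$: if $r^* < b$ there is no usable approximation, and the constants in the bounds above break down. I handle this by noting that $V \hat h_{\hat r}$ and $g$ both take values in $[-C, C]$, so the squared $L^2(P)$ error is trivially bounded by $4 C^2$, and any finite collection of small $n$ can be absorbed into $D_1$. Combining the two cases yields $\expect(\lVert V\hat h_{\hat r} - g\rVert_{L^2(P)}^2) \leq D_1 n^{-\beta/(1+\beta)}$ for a suitably enlarged constant $D_1$ independent of $n$ and $\tilde n$.
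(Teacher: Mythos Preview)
Your proposal is correct and follows essentially the same route as the paper: apply Theorem \ref{tValBound}, choose $r_0 \in R$ of order $n^{(1-\beta)/(2(1+\beta))}$, invoke the first display of Theorem \ref{tBddInterBound} at $r_0$, and check that the validation term $E_{\mathrm{val}}$ is $O((\log n)^{1/2} n^{-1/2}) = o(n^{-\beta/(1+\beta)})$ using $\tilde n \geq c n$ and $\rho = a n^{1/2}$.

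The only substantive difference is the constant in front of $n^{(1-\beta)/(2(1+\beta))}$. The paper takes $r_0$ near $a\, n^{(1-\beta)/(2(1+\beta))}$, where $a$ is the constant already appearing in the definition of $R$; since $(1-\beta)/(2(1+\beta)) \leq 1/2$, this choice lies in $[0,\rho]$ for every $n \geq 1$, and under \ref{R2} a grid point in $[a n^{(1-\beta)/(2(1+\beta))}, a n^{(1-\beta)/(2(1+\beta))} + b)$ always exists. You instead aim for the optimiser $r^*$ of Theorem \ref{tBddInterBound}, whose leading constant need not be $a$, so $r^* \leq \rho$ may fail for small $n$ and you must invoke the trivial bound $\lVert V\hat h_{\hat r} - g\rVert_{L^2(P)}^2 \leq 4C^2$ there. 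Both are valid; the paper's choice is slightly cleaner because it avoids the separate small-$n$ argument, while yours makes more explicit that any radius of the right order works.
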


\section{High-Probability Bounds}

In this section, we look at how to extend our expectation bounds on our estimators to high-probability bounds. In order to do this, we must control the second term of the bound in Lemma \ref{lIneq} with $A = r B_H$ for $r > 0$, which is
\begin{equation}
\lVert h_r - g \rVert_{L^2 (P_n)}^2 \label{eIneqVary}
\end{equation}
for $h_r \in r B_H$. There is no way to bound \eqref{eIneqVary} in high-probability without strict assumptions on $g$. In fact, the most natural assumption is \ref{g2} that $\lVert g \rVert_\infty \leq C$ for $C > 0$, which we assume throughout this section. Bounding \eqref{eIneqVary} also requires us to introduce a new measure of how well $g$ can be approximated by elements of $H$. We define
\begin{equation*}
I_\infty (g, r) = \inf \left \{ \lVert h_r - g \rVert_\infty^2 : h_r \in r B_H \right \}
\end{equation*}
for $r > 0$. Note that $I_\infty (g, r) \geq I_2 (g, r)$. Using $I_\infty (g, r)$ instead of $I_2 (g, r)$ means that we do not have to control \eqref{eIneqVary} by relying on $\lVert h_r - g \rVert_\infty \leq \lVert k \rVert_\infty r + C$. Using Hoeffding's inequality, this would add a term of order $r^2 t^{1/2} / n^{1/2}$ for $t \geq 1$ to the bound in Theorem \ref{tProbBddBound} below, which holds with probability $1 - 3 e^{-t}$, substantially increasing its size.

It may be possible to avoid this problem by instead considering the Ivanov-regularised least-squares estimator
\begin{equation*}
\hat f_r = \argmin_{f \in  V(r B_H)} \frac{1}{n} \sum_{i = 1}^n (f(X_i) - Y_i)^2
\end{equation*}
for $r > 0$, where $V (r B_H) = \{ V h_r : h_r \in r B_H \}$. The second term of the bound in Lemma \ref{lIneq} with $A = V(r B_H)$ is
\begin{equation}
\lVert V h_r - g \rVert_{L^2 (P_n)}^2 \label{eIneqVary2}
\end{equation}
for $h_r \in r B_H$. Since $\lVert V h_r - g \rVert_\infty \leq 2 C$, using Hoeffding's inequality to bound \eqref{eIneqVary2} would only add a term of order $C^2 t^{1/2} / n^{1/2}$ to the bound in Theorem \ref{tProbBddBound} below, which would not alter its size. However, the calculation and analysis of the estimator $\hat f_r$ is outside the scope of this paper. This is because the calculation of $\hat f_r$ involves minimising a quadratic form subject to a series of linear constraints, and its analysis requires a bound on the supremum in \eqref{eSup1} with $A = V(r B_H)$.

The rest of the analysis of $V \hat h_r$ is similar to that of the expectation bound. The supremum in \eqref{eSup1} with $A = r B_H$ can again be bounded by
\begin{equation*}
8 r \left ( \frac{1}{n^2} \sum_{i, j = 1}^n (Y_i - g(X_i)) (Y_j - g(X_j)) k(X_i, X_j) \right )^{1/2}.
\end{equation*}
The quadratic form can be bounded using Lemma \ref{lSubgaussProb}, under an assumption on the behaviour of the errors of the response variables $Y_i$ for $1 \leq i \leq n$. The proof of Theorem \ref{tProbBddBound} below uses a very similar argument to this one. The supremum in \eqref{eSup2} with $A = r B_H$ can be bounded using Talagrand's inequality \citep[Theorem A.9.1 of][]{steinwart2008support}. In order to use Lemma \ref{lSubgaussProb}, we must assume
\begin{enumerate}[label={($Y$2)}]
\item \hfil $Y_i - g(X_i)$ is $\sigma^2$-subgaussian given $X_i$ for $1 \leq i \leq n$. \label{Y2}
\end{enumerate}
This assumption is stronger than \ref{Y1}. In particular, Theorem \ref{tBddBound} still holds under the assumptions \ref{Y2}, \ref{H} and \ref{g2}.

\begin{theorem} \label{tProbBddBound}
Assume \ref{Y2}, \ref{H} and \ref{g2}. Let $r > 0$ and $t \geq 1$. With probability at least $1 - 3 e^{-t}$, we have
\begin{equation*}
\lVert V \hat h_r -  g \rVert_{L^2 (P)}^2 \leq \left ( D_1 + D_2 r^{1/2} + D_3 r \right ) t^{1/2} n^{- 1/2} + D_4 t n^{-1} + 10 I_\infty (g, r)
\end{equation*}
for some constants $D_1, D_2, D_3, D_4 > 0$ not depending on $n$ or $t$.
\end{theorem}

We can obtain rates of convergence for our estimator $V \hat h_r$, but we must make a new assumption about how well $g$ can be approximated by elements of $H$, instead of \ref{g1}. We now assume
\begin{enumerate}[label={($g$3)}]
\item \hfil $g \in [L^\infty, H]_{\beta, \infty}$ with norm at most $B$ for $\beta \in (0, 1)$ and $B > 0$, \label{g3}
\end{enumerate}
instead of $g \in [L^2 (P), H]_{\beta, \infty}$ with norm at most $B$. This assumption is stronger than \ref{g1}, as it implies that the norm of $g \in [L^2 (P), H]_{\beta, \infty}$ is
\begin{equation*}
\sup_{t > 0} (t^{-\beta} \inf_{h \in H} (\lVert g - h \rVert_{L^2 (P)} + t \rVert h \lVert_H)) \leq \sup_{t > 0} (t^{-\beta} \inf_{h \in H} (\lVert g - h \rVert_{L^\infty} + t \rVert h \lVert_H)) \leq B.
\end{equation*}
In particular, Theorem \ref{tBddInterBound} still holds under the assumptions \ref{Y1}, \ref{H}, \ref{g2} and \ref{g3} or \ref{Y2}, \ref{H}, \ref{g2} and \ref{g3}. The assumption \ref{g3}, together with Lemma \ref{lApproxInter}, give
\begin{equation}
I_\infty (g, r) \leq \frac{B^{2/(1-\beta)}}{r^{ 2 \beta/(1-\beta)}}. \label{eLinftyApproxInter}
\end{equation}
We obtain a high-probability bound on the squared $L^2 (P)$ error of $V \hat h_r$ of order \linebreak $t^{\beta/(1+\beta)} n^{-\beta/(1+\beta)}$ with probability at least $1 - e^{- t}$.

\begin{theorem} \label{tProbBddInterBound}
Assume \ref{Y2}, \ref{H}, \ref{g2} and \ref{g3}. Let $r > 0$ and $t \geq 1$. With probability at least $1 - 3 e^{-t}$, we have
\begin{equation*}
\lVert V \hat h_r -  g \rVert_{L^2 (P)}^2 \leq \left ( D_1 + D_2 r^{1/2} + D_3 r \right ) t^{1/2} n^{- 1/2} + D_4 t n^{-1} + D_5 r^{- 2 \beta/(1-\beta)}
\end{equation*}
for some constants $D_1, D_2, D_3, D_4, D_5 > 0$ not depending on $n$ or $t$. Let $D_6 > 0$. Setting
\begin{equation*}
r = D_6 t^{- (1-\beta)/(2 (1+\beta))} n^{(1-\beta)/(2 (1+\beta))}
\end{equation*}
gives
\begin{equation*}
\lVert V \hat h_r -  g \rVert_{L^2 (P)}^2
\end{equation*}
is at most
\begin{equation*}
D_7 t^{\beta/(1+\beta)} n^{- \beta/(1+\beta)} + D_8 t^{(1+3\beta)/(4(1+\beta))} n^{- (1+3\beta)/(4(1+\beta))} + D_1 t^{1/2} n^{-1/2} + D_4 t n^{-1}
\end{equation*}
for constants $D_7, D_8 > 0$ not depending on $n$ or $t$.
\end{theorem}

Since we must let $r \to \infty$ for the initial bound in Theorem \ref{tProbBddInterBound} to tend to $0$, the asymptotically largest terms in the bound are
\begin{equation*}
D_3 r t^{1/2} n^{- 1/2} + D_5 r^{- 2 \beta/(1-\beta)}.
\end{equation*}
If we minimise this over $r > 0$, we get $r$ of the form in Theorem \ref{tProbBddInterBound} with
\begin{equation*}
D_6 = \left ( \frac{2 D_5 \beta}{D_3 (1-\beta)} \right )^{(1-\beta)/(1+\beta)}.
\end{equation*}
It is possible to avoid the dependence of $r$ on $t$ by setting
\begin{equation*}
r = D_6 n^{(1-\beta)/(2 (1+\beta))}
\end{equation*}
to obtain the bound
\begin{equation*}
D_7 t^{1/2} n^{- \beta/(1+\beta)} + D_8 t^{1/2} n^{- (1+3\beta)/(4(1+\beta))} + D_1 t^{1/2} n^{-1/2} + D_4 t n^{-1}.
\end{equation*}

\subsection{Validation}

We now extend our expectation bound on $V \hat h_{\hat r}$ to a high-probability bound. The supremum in \eqref{eSup1} with $A = F$ can be bounded using chaining \citep[Exercise 1 of Section 2.3 of][]{gine2015mathematical}, while the supremum in \eqref{eSup2} with $A = F$ can be bounded using Talagrand's inequality.

\begin{theorem} \label{tProbValBound}
Assume \ref{H} and \ref{tY}. Let $r_0 \in R$ and $t \geq 1$. With probability at least $1 - 3 e^{-t}$, we have
\begin{equation*}
\lVert V \hat h_{\hat r} -  g \rVert_{L^2 (P)}^2
\end{equation*}
is at most
\begin{align*}
&\frac{20 C (C + \tilde \sigma) t^{1/2}}{\tilde n^{1/2}} \left ( 1 + 32 \left ( \left ( 2 \log \left (2 + \frac{\lVert k \rVert_\infty^2 \rho^2}{C^2} \right ) \right )^{1/2} + \pi^{1/2} \right ) \right ) \\
+ \; &\frac{48 C^2 t^{1/2}}{\tilde n^{1/2}} + \frac{16 C^2 t}{3 \tilde n} + 10 \lVert V \hat h_{r_0} - g \rVert_{L^2 (P)}^2.
\end{align*}
\end{theorem}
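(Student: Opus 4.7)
The plan is to mirror the proof of Theorem \ref{tValBound}, replacing each expectation bound by its high-probability counterpart. I first condition on the training sample, so that $F = \{V\hat h_r : r \in R\}$ becomes a (random but frozen) family of $[-C, C]$-valued functions, with compact index set $R$ and Lipschitz parametrisation $r \mapsto \hat h_r$ supplied by Lemma \ref{lContEst}. Applying Lemma \ref{lIneq} to the validation sample with $A = F$, $\hat f = V\hat h_{\hat r}$ and $f = V\hat h_{r_0}$, and combining with inequality \eqref{eSup2} together with $\lVert V\hat h_{\hat r} - g \rVert_{L^2(P)}^2 \leq 2 \lVert V\hat h_{\hat r} - V\hat h_{r_0} \rVert_{L^2(P)}^2 + 2 \lVert V\hat h_{r_0} - g \rVert_{L^2(P)}^2$, reduces the claim to high-probability control of two suprema $T_1$ and $T_2$ together with a concentration bound for the empirical squared error $\lVert V\hat h_{r_0} - g \rVert_{L^2(\tilde P_{\tilde n})}^2$.

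The first supremum is that from \eqref{eSup1},
\[T_1 = \sup_{f_1, f_2 \in F} \frac{4}{\tilde n} \sum_{i = 1}^{\tilde n} (\tilde Y_i - g(\tilde X_i))(f_1(\tilde X_i) - f_2(\tilde X_i)).\]
Conditionally on the training data and the $\tilde X_i$, this is the supremum of a centred, $\tilde \sigma^2$-subgaussian process indexed by $F$, with canonical distance proxy $2 \tilde \sigma \lVert f_1 - f_2 \rVert_{L^2(\tilde P_{\tilde n})} / \tilde n^{1/2}$. The Lipschitz bound of Lemma \ref{lContEst}, combined with $R \subseteq [0, \rho]$, provides a covering-number estimate of $F$ in $L^\infty$, and hence in $L^2(\tilde P_{\tilde n})$, whose entropy is logarithmic in $\lVert k \rVert_\infty \rho / C$; the diameter of $F$ in $L^\infty$ is at most $2C$. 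The high-probability chaining bound from Exercise~1 of Section~2.3 of \citet{gine2015mathematical}, indicated in the paragraph preceding the theorem, then controls $T_1$ by a multiple of $C \tilde \sigma \tilde n^{-1/2} \bigl( (\log(2 + \lVert k \rVert_\infty^2 \rho^2 / C^2))^{1/2} + t^{1/2} \bigr)$ with probability at least $1 - e^{-t}$, after integrating over the $\tilde X_i$.

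The second supremum $T_2 = \sup_{f_1, f_2 \in F} \bigl| \lVert f_1 - f_2 \rVert_{L^2(\tilde P_{\tilde n})}^2 - \lVert f_1 - f_2 \rVert_{L^2(P)}^2 \bigr|$ is handled by Talagrand's inequality. Since $|f_1 - f_2|^2 \leq 4 C^2$ uniformly on $F$, the uniform and variance bounds needed by Talagrand are immediate, and $\expect(T_2)$ is bounded by the symmetrisation-contraction-chaining argument already used in the proof of Theorem \ref{tValBound}. Talagrand then adds a fluctuation of order $C^2 t^{1/2} / \tilde n^{1/2} + C^2 t / \tilde n$; a parallel Bernstein inequality applied to the bounded random variable $\lVert V\hat h_{r_0} - g \rVert_{L^2(\tilde P_{\tilde n})}^2$ contributes a further deviation of the same order. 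A union bound over the three events of probability $1 - e^{-t}$ each produces the overall $1 - 3 e^{-t}$ claim. The main obstacle is constant bookkeeping: I must match the factor $32 \bigl( (2 \log(2 + \lVert k \rVert_\infty^2 \rho^2 / C^2))^{1/2} + \pi^{1/2} \bigr)$ delivered simultaneously by the subgaussian chaining for $T_1$ and by the Rademacher chaining inside $\expect(T_2)$, and combine the separate $C^2 t^{1/2} / \tilde n^{1/2}$ contributions into the single $48 C^2 t^{1/2} / \tilde n^{1/2}$ term in the statement.
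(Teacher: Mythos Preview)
Your proposal is correct and follows essentially the same route as the paper: apply Lemma \ref{lIneq} on the validation sample with $A=F$, control the noise supremum by high-probability subgaussian chaining (Exercise~1, Section~2.3 of Gin\'e--Nickl), control the norm-switch supremum by Talagrand's inequality with the expectation bounded via symmetrisation--contraction--chaining (Lemma \ref{lValSwapNorm}), and handle $\lVert V\hat h_{r_0}-g\rVert_{L^2(\tilde P_{\tilde n})}^2$ by a bounded-difference concentration inequality (the paper uses Hoeffding rather than Bernstein, and fixes $f_0=V\hat h_{r_0}$ rather than taking suprema over pairs, but these are cosmetic differences). The three $e^{-t}$ events you identify match exactly those in Lemmas \ref{lProbValEmpNorm} and \ref{lProbValSwapNorm}.
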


We can apply the validation result in Theorem \ref{tProbValBound} to the initial bound in Theorem \ref{tProbBddInterBound} by assuming either \ref{R1} or \ref{R2}. We obtain a high-probability bound on the squared $L^2 (P)$ error of $V \hat h_{\hat r}$ of order $t^{1/2} n^{-\beta/(1+\beta)}$ with probability at least $1 - e^{- t}$. This is the same order in $n$ as the second bound in Theorem \ref{tProbBddInterBound}.

\begin{theorem} \label{tProbValInterBound}
Assume \ref{Y2}, \ref{H}, \ref{g2}, \ref{g3} and \ref{tY}. Let $t \geq 1$. Also assume \ref{R1} or \ref{R2} and that $\tilde n$ increases at least linearly in $n$. With probability at least $1 - 6 e^{-t}$, we have
\begin{equation*}
\lVert V \hat h_{\hat r} -  g \rVert_{L^2 (P)}^2 \leq D_1 t^{1/2} n^{-\beta/(1+\beta)} + D_2 t n^{-1}
\end{equation*}
for constants $D_1, D_2 > 0$ not depending on $n$, $\tilde n$ or $t$.
\end{theorem}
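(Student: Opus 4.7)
The plan is to derive Theorem \ref{tProbValInterBound} by combining the validation bound of Theorem \ref{tProbValBound} with the non-adaptive high-probability rate of Theorem \ref{tProbBddInterBound}, via a union bound. The key observation is that Theorem \ref{tProbValBound} reduces the adaptive problem to bounding $\lVert V \hat h_{r_0} - g \rVert_{L^2(P)}^2$ for a single fixed $r_0 \in R$, and we are free to choose this $r_0$ as the optimising value from Theorem \ref{tProbBddInterBound}.

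First, I would apply Theorem \ref{tProbValBound} to obtain, on an event $E_1$ of probability at least $1 - 3 e^{-t}$, a bound of the form
\begin{equation*}
\lVert V \hat h_{\hat r} - g \rVert_{L^2(P)}^2 \leq \frac{C_1 t^{1/2} (\log(2 + \lVert k \rVert_\infty^2 \rho^2 / C^2))^{1/2}}{\tilde n^{1/2}} + \frac{C_2 t}{\tilde n} + 10 \lVert V \hat h_{r_0} - g \rVert_{L^2(P)}^2.
\end{equation*}
Under either \ref{R1} or \ref{R2} we have $\rho = a n^{1/2}$, so the logarithmic factor is $O(\log n)$, and since $\tilde n \geq c_0 n$ the first two terms are $O(t^{1/2} (\log n)^{1/2} n^{-1/2}) + O(t n^{-1})$. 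Because $\beta/(1+\beta) < 1/2$ for $\beta \in (0,1)$, the factor $(\log n)^{1/2} n^{-1/2}$ is dominated by $n^{-\beta/(1+\beta)}$ up to a constant depending only on $\beta$.

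Second, I would choose $r_0 \in R$ close to the optimising value $r^{*} \asymp t^{-(1-\beta)/(2(1+\beta))} n^{(1-\beta)/(2(1+\beta))}$ from Theorem \ref{tProbBddInterBound}. Under \ref{R1}, $R = [0, a n^{1/2}]$ is an interval and $r^{*} = o(n^{1/2})$, so for $n$ large enough I take $r_0 = r^{*}$ directly (the bound is trivial for small $n$ and can be absorbed into the final constants). Under \ref{R2}, $R$ is a finite grid with fixed spacing $b$ truncated at $a n^{1/2}$, and I would take $r_0$ to be the grid point nearest $\min(r^{*}, a n^{1/2})$, so $|r_0 - r^{*}| \leq b$ and hence $r_0 / r^{*} \to 1$. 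Applying Theorem \ref{tProbBddInterBound} with this $r_0$ on an event $E_2$ of probability at least $1 - 3 e^{-t}$ then yields
\begin{equation*}
\lVert V \hat h_{r_0} - g \rVert_{L^2(P)}^2 \leq D' t^{\beta/(1+\beta)} n^{-\beta/(1+\beta)} + D'' t n^{-1} + \text{(lower order)}.
\end{equation*}
A union bound places us on $E_1 \cap E_2$ with probability at least $1 - 6 e^{-t}$, and using $t^{\beta/(1+\beta)} \leq t^{1/2}$ for $t \geq 1$ (since $\beta/(1+\beta) \leq 1/2$) recovers the claimed form $D_1 t^{1/2} n^{-\beta/(1+\beta)} + D_2 t n^{-1}$.

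The main obstacle is handling the discrete case \ref{R2}: I must confirm that substituting the nearest grid point $r_0$ for the optimiser $r^{*}$ does not degrade the rate. This reduces to checking that the right-hand side of Theorem \ref{tProbBddInterBound}, viewed as a function of $r$ through terms linear in $r$, in $r^{1/2}$, and in $r^{-2\beta/(1-\beta)}$, is regular enough that a constant-sized perturbation of $r^{*}$ preserves the asymptotic order, which holds because $r^{*} \to \infty$ while the grid spacing $b$ remains constant. A secondary technical point is checking that the $r^{1/2}$ term from Theorem \ref{tProbBddInterBound}, after substitution of $r_0$, contributes only at rate $n^{-(1+3\beta)/(4(1+\beta))}$, which is strictly faster than $n^{-\beta/(1+\beta)}$ and can therefore be absorbed into $D_1$.
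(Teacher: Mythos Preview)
Your proposal is correct and follows essentially the same route as the paper: apply Theorem \ref{tProbValBound} to reduce to a single fixed $r_0 \in R$, then invoke Theorem \ref{tProbBddInterBound} at that $r_0$, and combine the two high-probability events by a union bound, handling the $\rho = a n^{1/2}$ logarithm via $\tilde n \gtrsim n$.

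The one noteworthy difference is your choice of $r_0$. You pick the $t$-dependent optimiser $r^* \asymp t^{-(1-\beta)/(2(1+\beta))} n^{(1-\beta)/(2(1+\beta))}$ from Theorem \ref{tProbBddInterBound}, which forces you to deal with edge cases (small $n$ where $r^*$ may exceed $a n^{1/2}$; large $t$ where $r^*$ may fall below the grid spacing $b$) and then to coarsen $t^{\beta/(1+\beta)}$ to $t^{1/2}$ at the end. The paper instead takes the $t$-independent choice $r_0 = a n^{(1-\beta)/(2(1+\beta))}$ (or, under \ref{R2}, the unique grid point in $[a n^{(1-\beta)/(2(1+\beta))}, a n^{(1-\beta)/(2(1+\beta))} + b)$). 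Since $(1-\beta)/(2(1+\beta)) \leq 1/2$, this $r_0$ lies in $R$ for every $n \geq 1$, so no small-$n$ argument is needed; and substituting it into the \emph{initial} bound of Theorem \ref{tProbBddInterBound} (rather than the second, optimised one) already gives the leading term $8\lVert k\rVert_\infty(16C+5\sigma)\, a\, t^{1/2} n^{-\beta/(1+\beta)}$ directly, with the $r_0^{1/2}$ and constant terms contributing only faster rates. Both routes are valid, but the paper's $t$-free choice is cleaner and sidesteps the case analysis you outline.
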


\section{Discussion}

In this paper, we show how Ivanov regularisation can be used to produce smooth estimators which have a small squared $L^2 (P)$ error. We first consider the case in which the regression function lies in an interpolation space between $L^2 (P)$ and an RKHS $H$. We achieve bounds on the squared $L^2 (P)$ error under the assumption that $H$ is separable with a bounded and measurable kernel. Under the weak assumption that the response variables have bounded variance, we prove an expectation bound on the squared $L^2 (P)$ error of our estimator of order $n^{- \beta/2}$. Here, $\beta$ parametrises the interpolation space between $L^2 (P)$ and $H$ containing the regression function. As far as we are aware, the analysis of an estimator in this setting has not previously been considered.

If we assume that the regression function is bounded, then we can clip the estimator and show that the clipped estimator has an expected squared $L^2 (P)$ error of order $n^{- \beta/(1+\beta)}$. Under the stronger assumption that the response variables have subgaussian errors and that the regression function comes from an interpolation space between $L^\infty$ and $H$, we show that the squared $L^2 (P)$ error is of order $n^{- \beta/(1+\beta)}$ with high probability. For the settings in which the regression function is bounded, we can use training and validation on the data set to obtain bounds of the same order of $n^{- \beta/(1+\beta)}$. This allows us to select the size of the norm constraint for our Ivanov regularisation without knowing which interpolation space contains the regression function. The response variables in the validation set must have subgaussian errors.

The expectation bounds of order $n^{- \beta / (1 + \beta)}$ for bounded regression functions is optimal in the sense discussed at the end of Section \ref{sIntro}. We use Ivanov regularisation instead of Tikhonov regularisation to control empirical processes over balls in the RKHS. By contrast, the analysis of Tikhonov-regularised estimators usually uses the spectral decomposition of the kernel operator \citep{mendelson2010regularization, steinwart2009optimal}. Analysing the Ivanov-regularised estimator using this decomposition would give a more complete picture of the differences between Ivanov and Tikhonov regularisation for RKHS regression.

It would be useful to extend the lower bound of order $n^{- \beta / (1 + \beta)}$, discussed at the end of Section \ref{sIntro}, to the case in which the regression function lies in an interpolation space between $L^\infty$ and the RKHS. This would show that our high-probability bounds are also of optimal order. However, it is possible that estimation can be performed with a high-probability bound on the squared $L^2 (P)$ error of smaller order.

\appendix

\section{Proof of Expectation Bound for Unbounded Regression Function}

The proofs of all of the bounds in this paper follow the outline in Section \ref{sIvanov}. We first prove Lemma \ref{lIneq}.

{\bf Proof of Lemma \ref{lIneq}} Since $f \in A$, the definition of $\hat f$ gives
\begin{equation*}
\frac{1}{n} \sum_{i = 1}^n (\hat f(X_i) - Y_i)^2 \leq \frac{1}{n} \sum_{i = 1}^n (f (X_i) - Y_i)^2.
\end{equation*}
Expanding
\begin{equation*}
(\hat f(X_i) - Y_i )^2 = \left ( (\hat f(X_i) -  f(X_i)) + (f(X_i) - Y_i) \right )^2,
\end{equation*}
substituting into the above and rearranging gives
\begin{equation*}
\frac{1}{n} \sum_{i = 1}^n (\hat f(X_i) - f(X_i))^2 \leq \frac{2}{n} \sum_{i = 1}^n (Y_i - f(X_i)) (\hat f(X_i) - f(X_i)).
\end{equation*}
Substituting
\begin{equation*}
Y_i - f(X_i) = (Y_i - g(X_i)) + (g(X_i) - f(X_i))
\end{equation*}
into the above and applying the Cauchy--Schwarz inequality to the second term gives
\begin{align*}
\lVert \hat f - f \rVert_{L^2 (P_n)}^2 &\leq \frac{2}{n} \sum_{i = 1}^n (Y_i - g(X_i)) (\hat f(X_i) - f(X_i)) \\
&+ 2 \lVert g - f \rVert_{L^2 (P_n)} \lVert \hat f - f \rVert_{L^2 (P_n)}.
\end{align*}
For constants $a, b \in \real$ and a variable $x \in \real$, we have
\begin{equation*}
x^2 \leq a + 2 b x \implies x^2 \leq 2 a + 4 b^2
\end{equation*}
by completing the square and rearranging. Applying this result to the above inequality proves the lemma. \hfill \BlackBox

The following lemma is useful for bounding the expectation of both of the suprema in Section \ref{sIvanov}.

\begin{lemma} \label{lEmpProc}
Assume \ref{H}. Let the $\eps_i$ be random variables on $(\Omega, \mathcal{F}, \prob)$ such that $\expect(\eps_i \vert X) = 0$ almost surely and $\vary(\eps_i \vert X) \leq \sigma^2$ almost surely for $1 \leq i \leq n$ and $\cov(\eps_i, \eps_j \vert X) = 0$ almost surely for $1 \leq i, j \leq n$ with $i \neq j$. Then
\begin{equation*}
\expect \left ( \sup_{f \in r B_H} \left \lvert \frac{1}{n} \sum_{i = 1}^n \eps_i f(X_i) \right \rvert \right ) \leq \frac{\lVert k \rVert_\infty \sigma r}{n^{1/2}}.
\end{equation*}
\end{lemma}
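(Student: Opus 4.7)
The plan is to exploit the reproducing kernel property to rewrite the linear functional inside the supremum as an inner product in $H$, then apply Cauchy--Schwarz to separate the $f$-dependence from the noise-dependence. Concretely, for $f \in H$ and each $X_i$, we have $f(X_i) = \langle f, k_{X_i} \rangle_H$, so
\begin{equation*}
\frac{1}{n} \sum_{i=1}^n \eps_i f(X_i) = \left\langle f, \frac{1}{n} \sum_{i=1}^n \eps_i k_{X_i} \right\rangle_H.
\end{equation*}
Cauchy--Schwarz together with $\lVert f \rVert_H \leq r$ then yields
\begin{equation*}
\sup_{f \in r B_H} \left\lvert \frac{1}{n} \sum_{i=1}^n \eps_i f(X_i) \right\rvert \leq r \left\lVert \frac{1}{n} \sum_{i=1}^n \eps_i k_{X_i} \right\rVert_H,
\end{equation*}
which reduces the problem to bounding the expected $H$-norm of the random element $n^{-1} \sum_i \eps_i k_{X_i}$.

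Next, I would apply Jensen's inequality to move from the expected norm to the square root of the expected squared norm, and expand the squared norm via the reproducing kernel identity $\langle k_{X_i}, k_{X_j} \rangle_H = k(X_i, X_j)$. This gives
\begin{equation*}
\expect \left\lVert \frac{1}{n} \sum_{i=1}^n \eps_i k_{X_i} \right\rVert_H \leq \left( \frac{1}{n^2} \sum_{i, j = 1}^n \expect \bigl( \eps_i \eps_j k(X_i, X_j) \bigr) \right)^{1/2}.
\end{equation*}
Conditioning on $X$ and using the hypotheses $\expect(\eps_i \mid X) = 0$ and $\cov(\eps_i, \eps_j \mid X) = 0$ for $i \neq j$ kills the off-diagonal terms, leaving only the diagonal contributions, each bounded by $\sigma^2 k(X_i, X_i) \leq \sigma^2 \lVert k \rVert_\infty^2$. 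Assembling everything produces the claimed bound $\lVert k \rVert_\infty \sigma r / n^{1/2}$.

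There is no deep obstacle here; the main subtlety is measurability of the supremum, which is handled by separability of $H$: a countable dense subset of $r B_H$ suffices because $f \mapsto \sum_i \eps_i f(X_i)$ is continuous on $H$ (the evaluation functionals are bounded), so the uncountable supremum agrees with a countable one and is therefore a random variable on $(\Omega, \mathcal{F})$. A minor bookkeeping point is that the identity $\expect(\eps_i \eps_j \mid X) = \cov(\eps_i, \eps_j \mid X) + \expect(\eps_i \mid X) \expect(\eps_j \mid X) = 0$ for $i \neq j$ and $\expect(\eps_i^2 \mid X) \leq \sigma^2$ for $i = j$ must be invoked before taking the outer expectation over $X$. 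With these in place, the lemma follows in a short, self-contained computation.
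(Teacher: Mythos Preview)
Your proposal is correct and follows essentially the same route as the paper: use the reproducing property to write $f(X_i)=\langle f,k_{X_i}\rangle_H$, apply Cauchy--Schwarz to reduce to $r\lVert n^{-1}\sum_i \eps_i k_{X_i}\rVert_H$, then use Jensen and the conditional moment assumptions to bound the expected (squared) norm by $\sigma^2\lVert k\rVert_\infty^2/n$. The only cosmetic difference is that the paper applies Jensen conditionally on $X$ first and then again after taking the outer expectation, whereas you apply Jensen once to the full expectation; both orderings yield the same bound.
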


\begin{proof}
This proof method is due to Remark 6.1 of \citet{sriperumbudur2016optimal}. By the reproducing kernel property and the Cauchy--Schwarz inequality, we have
\begin{align*}
\sup_{f \in r B_H} \left \lvert \frac{1}{n} \sum_{i = 1}^n \eps_i f(X_i) \right \rvert &= \sup_{f \in r B_H} \left \lvert \left \langle \frac{1}{n} \sum_{i = 1}^n \eps_i k_{X_i}, f \right \rangle_H \right \rvert \\
&= r \left \lVert \frac{1}{n} \sum_{i = 1}^n \eps_i k_{X_i} \right \rVert_H \\
&= r \left ( \frac{1}{n^2} \sum_{i, j = 1}^n \eps_i \eps_j k(X_i, X_j) \right )^{1/2}.
\end{align*}
By Jensen's inequality, we have
\begin{align*}
\expect \left ( \sup_{f \in r B_H} \left \lvert \frac{1}{n} \sum_{i = 1}^n \eps_i f(X_i) \right \rvert \; \middle \vert X \right ) &\leq r \left ( \frac{1}{n^2} \sum_{i, j = 1}^n \cov(\eps_i, \eps_j \vert X ) k(X_i, X_j) \right )^{1/2} \\
&\leq r \left ( \frac{\sigma^2}{n^2} \sum_{i = 1}^n k(X_i, X_i) \right )^{1/2}
\end{align*}
almost surely and again, by Jensen's inequality, we have
\begin{equation*}
\expect \left ( \sup_{f \in r B_H} \left \lvert \frac{1}{n} \sum_{i = 1}^n \eps_i f(X_i) \right \rvert \right ) \leq r \left ( \frac{\sigma^2 \lVert k \rVert_\infty^2}{n} \right )^{1/2}.
\end{equation*}
The result follows.
\end{proof}
We bound the distance between $\hat h_r$ and $h_r$ in the $L^2(P_n)$ norm for $r > 0$ and $h_r \in r B_H$.

\begin{lemma} \label{lEmpNorm}
Assume \ref{Y1} and \ref{H}. Let $h_r \in r B_H$. We have
\begin{equation*}
\expect \left (\lVert \hat h_r -  h_r \rVert_{L^2 (P_n)}^2 \right ) \leq \frac{4 \lVert k \rVert_\infty \sigma r}{n^{1/2}} + 4 \lVert h_r - g \rVert_{L^2 (P)}^2.
\end{equation*}
\end{lemma}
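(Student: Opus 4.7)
The plan is to apply Lemma \ref{lIneq} with $A = r B_H$ and $f = h_r$, which is admissible because $h_r \in r B_H$, and yields the pointwise inequality
\begin{equation*}
\lVert \hat h_r - h_r \rVert_{L^2(P_n)}^2 \leq \frac{4}{n} \sum_{i = 1}^n (Y_i - g(X_i))(\hat h_r(X_i) - h_r(X_i)) + 4 \lVert h_r - g \rVert_{L^2(P_n)}^2.
\end{equation*}
Taking expectations then reduces the proof to estimating the two terms on the right-hand side.

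The deterministic-looking term is the easier of the two: since $h_r - g \in L^2(P)$ is a fixed element and the $X_i$ are \iid with law $P$, Fubini gives $\expect \lVert h_r - g \rVert_{L^2(P_n)}^2 = \lVert h_r - g \rVert_{L^2(P)}^2$, which is exactly the second term of the stated bound.

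For the cross term, I would set $\eps_i = Y_i - g(X_i)$ and rewrite it via the reproducing kernel property as
\begin{equation*}
\frac{1}{n} \sum_{i=1}^n \eps_i (\hat h_r - h_r)(X_i) = \left\langle \hat h_r - h_r, \frac{1}{n} \sum_{i=1}^n \eps_i k_{X_i} \right\rangle_H.
\end{equation*}
Cauchy--Schwarz together with $\hat h_r, h_r \in r B_H$ (so $\lVert \hat h_r - h_r \rVert_H \leq 2r$) bounds this by $2 r \lVert n^{-1} \sum_i \eps_i k_{X_i} \rVert_H$, and this is precisely the quantity controlled in the proof of Lemma \ref{lEmpProc}. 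The hypotheses of that lemma are satisfied because assumption \ref{Y1}, the \iid property of the data, and the fact that $g(X_i)$ is a version of $\expect(Y_i \mid X_i)$ together imply that, conditionally on $X = (X_1, \ldots, X_n)$, the $\eps_i$ are centred, uncorrelated and have variance at most $\sigma^2$. Applying Lemma \ref{lEmpProc} at radius $2 r$ (or, equivalently, computing $\expect \lVert n^{-1} \sum_i \eps_i k_{X_i} \rVert_H^2$ directly by splitting into diagonal and off-diagonal terms and then invoking Jensen's inequality) produces an expectation bound of the required order $\lVert k \rVert_\infty \sigma r / n^{1/2}$ on the cross term.

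There is no serious obstacle here: the argument is a direct assembly of Lemma \ref{lIneq} with the Hilbert-space expectation bound packaged in Lemma \ref{lEmpProc}. The only points requiring explicit verification are the measurability of $\hat h_r$ and of the cross term, which is supplied by Lemma \ref{lCalcEst}, and the translation of \ref{Y1} and the \iid assumption into the conditional centering, variance and uncorrelation hypotheses needed to invoke Lemma \ref{lEmpProc}.
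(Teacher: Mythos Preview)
Your plan is essentially the paper's: apply Lemma \ref{lIneq} with $A = r B_H$, take expectations, use $\expect \lVert h_r - g \rVert_{L^2(P_n)}^2 = \lVert h_r - g \rVert_{L^2(P)}^2$, and control the cross term via Lemma \ref{lEmpProc}. The one point of difference costs you a factor of $2$ in the constant and therefore does not quite prove the lemma as stated.

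You bound the cross term by treating $\hat h_r - h_r$ as a single element of $2 r B_H$, obtaining
\[
\expect \left( \frac{4}{n} \sum_{i=1}^n (Y_i - g(X_i))(\hat h_r - h_r)(X_i) \right) \leq 4 \cdot \frac{2 \lVert k \rVert_\infty \sigma r}{n^{1/2}} = \frac{8 \lVert k \rVert_\infty \sigma r}{n^{1/2}},
\]
so your argument yields the bound with leading constant $8$, not $4$. The paper instead splits the cross term into the $\hat h_r$ piece and the $h_r$ piece. Since $h_r$ is deterministic, $\expect\bigl(\tfrac{1}{n}\sum_i (Y_i - g(X_i)) h_r(X_i)\bigr) = 0$ exactly, by conditioning on $X_i$ and using $\expect(Y_i - g(X_i)\mid X_i)=0$. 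The $\hat h_r$ piece is then bounded by the supremum over $r B_H$ (not $2 r B_H$), and Lemma \ref{lEmpProc} at radius $r$ gives $\lVert k \rVert_\infty \sigma r / n^{1/2}$; multiplied by $4$ this produces the stated constant. If you only care about the order the two arguments are equivalent, but to recover the constant in the lemma you should adopt the paper's split.
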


\begin{proof}
By Lemma \ref{lIneq} with $A = r B_H$, we have
\begin{equation*}
\lVert \hat h_r -  h_r \rVert_{L^2 (P_n)}^2 \leq \frac{4}{n} \sum_{i = 1}^n (Y_i - g(X_i)) (\hat h_r (X_i) - h_r (X_i)) + 4 \lVert h_r - g \rVert_{L^2 (P_n)}^2.
\end{equation*}
We now bound the expectation of the right-hand side. We have
\begin{equation*}
\expect \left (\lVert h_r - g \rVert_{L^2 (P_n)}^2 \right ) = \lVert h_r - g \rVert_{L^2 (P)}^2.
\end{equation*}
Furthermore,
\begin{equation*}
\expect \left ( \frac{1}{n} \sum_{i = 1}^n (Y_i - g(X_i)) h_r (X_i) \right ) = \expect \left ( \frac{1}{n} \sum_{i = 1}^n \expect(Y_i - g(X_i) \vert X_i) h_r (X_i) \right ) = 0.
\end{equation*}
Finally, by Lemma \ref{lEmpProc} with $\eps_i = Y_i - g(X_i)$, we have
\begin{align*}
\expect \left (\frac{1}{n} \sum_{i = 1}^n (Y_i - g(X_i)) \hat h_r (X_i) \right ) &\leq \expect \left ( \sup_{f \in r B_H} \left \lvert \frac{1}{n} \sum_{i = 1}^n (Y_i - g(X_i)) f(X_i) \right \rvert \right ) \\
&\leq \frac{\lVert k \rVert_\infty \sigma r}{n^{1/2}}.
\end{align*}
The result follows.
\end{proof}
The following lemma is useful for moving the bound on the distance between $\hat h_r$ and $h_r$ from the $L^2(P_n)$ norm to the $L^2 (P)$ norm for $r > 0$ and $h_r \in r B_H$.

\begin{lemma} \label{lSwapNorm}
Assume \ref{H}. We have
\begin{equation*}
\expect \left (\sup_{f \in r B_H} \left \lvert \lVert f \rVert_{L^2 (P_n)}^2 - \lVert f \rVert_{L^2 (P)}^2 \right \rvert \right ) \leq \frac{8 \lVert k \rVert_\infty^2 r^2}{n^{1/2}}.
\end{equation*}
\end{lemma}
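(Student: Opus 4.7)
The plan is to apply the standard symmetrisation-and-contraction recipe to the empirical process indexed by $r B_H$, eventually reducing to Lemma \ref{lEmpProc}. First I would rewrite the supremum as
\begin{equation*}
\sup_{f \in r B_H} \left\lvert \lVert f \rVert_{L^2(P_n)}^2 - \lVert f \rVert_{L^2(P)}^2 \right\rvert = \sup_{f \in r B_H} \left\lvert \frac{1}{n} \sum_{i=1}^n \left(f(X_i)^2 - \expect(f(X_i)^2)\right) \right\rvert,
\end{equation*}
noting that measurability is handled by separability of $H$ together with continuity of evaluation at each $X_i$, so the supremum reduces to one over a countable dense set.

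Next I would apply the symmetrisation inequality (Lemma 2.3.1 of \citet{van1996weak}) to bound the expectation of this centred empirical process by
\begin{equation*}
2 \expect \left( \sup_{f \in r B_H} \left\lvert \frac{1}{n} \sum_{i=1}^n \eps_i f(X_i)^2 \right\rvert \right),
\end{equation*}
where the $\eps_i$ are i.i.d.\@ Rademacher variables independent of the $X_i$. The key observation is that for any $f \in r B_H$ and $x \in S$, the reproducing kernel property and Cauchy--Schwarz give $\lvert f(x) \rvert = \lvert \langle f, k_x \rangle_H \rvert \leq \lVert k \rVert_\infty r$, so the squaring map $\phi(t) = t^2$ is $2 \lVert k \rVert_\infty r$-Lipschitz on the relevant range and satisfies $\phi(0) = 0$.

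I then apply the contraction principle for Rademacher processes (Theorem 3.2.1 of \citet{gine2015mathematical}) to peel off the squares, gaining a multiplicative factor of $2 \cdot 2 \lVert k \rVert_\infty r = 4 \lVert k \rVert_\infty r$, reducing the problem to
\begin{equation*}
4 \lVert k \rVert_\infty r \cdot \expect \left( \sup_{f \in r B_H} \left\lvert \frac{1}{n} \sum_{i=1}^n \eps_i f(X_i) \right\rvert \right).
\end{equation*}
Since the Rademacher variables are independent of the $X_i$, have mean zero, variance one, and are pairwise uncorrelated, Lemma \ref{lEmpProc} with $\sigma = 1$ bounds this Rademacher average by $\lVert k \rVert_\infty r / n^{1/2}$. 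Combining the factor $2$ from symmetrisation, the factor $4 \lVert k \rVert_\infty r$ from contraction, and the bound $\lVert k \rVert_\infty r / n^{1/2}$ yields $8 \lVert k \rVert_\infty^2 r^2 / n^{1/2}$, as claimed.

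The only non-routine bookkeeping is checking the constants produced by the contraction principle (which carries a factor of $2L$ rather than $L$) and verifying measurability so that Fubini and the countable reduction of the supremum are legitimate; neither is a genuine obstacle, since $H$ is separable by \ref{H} and evaluation functionals are continuous on $H$.
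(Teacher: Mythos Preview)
Your proposal is correct and follows essentially the same route as the paper: symmetrisation via Lemma 2.3.1 of \citet{van1996weak}, then the contraction principle (Theorem 3.2.1 of \citet{gine2015mathematical}) using the pointwise bound $\lvert f(X_i)\rvert \leq \lVert k\rVert_\infty r$, and finally Lemma \ref{lEmpProc} with $\sigma^2=1$. The paper phrases the contraction step as applying the principle to the $1$-Lipschitz map $t\mapsto t^2/(2\lVert k\rVert_\infty r)$ and then multiplying through, whereas you track the Lipschitz constant $2\lVert k\rVert_\infty r$ directly, but the bookkeeping and final constant $8\lVert k\rVert_\infty^2 r^2/n^{1/2}$ are identical.
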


\begin{proof}
Let the $\eps_i$ for $1 \leq i \leq n$ be \iid Rademacher random variables on $(\Omega, \mathcal{F}, \prob)$, independent of the $X_i$. Lemma 2.3.1 of \citet{van1996weak} shows
\begin{equation*}
\expect \left (\sup_{f \in r B_H} \left \lvert \frac{1}{n} \sum_{i = 1}^n f (X_i)^2 - \int f^2 dP \right \rvert \right ) \leq 2 \expect \left ( \sup_{f \in r B_H} \left \lvert \frac{1}{n} \sum_{i = 1}^n \eps_i f(X_i)^2 \right \rvert \right )
\end{equation*}
by symmetrisation. Since $\lvert f (X_i) \rvert \leq \lVert k \rVert_\infty r$ for all $f \in r B_H$, we find
\begin{equation*}
\frac{f (X_i)^2}{2 \lVert k \rVert_\infty r}
\end{equation*}
is a contraction vanishing at 0 as a function of $f(X_i)$ for all $1 \leq i \leq n$. By Theorem 3.2.1 of \citet{gine2015mathematical}, we have
\begin{equation*}
\expect \left (\sup_{f \in r B_H} \left \lvert \frac{1}{n} \sum_{i = 1}^n \eps_i \frac{f (X_i)^2}{2 \lVert k \rVert_\infty r} \right \rvert \; \middle \vert X \right ) \leq 2 \expect \left (\sup_{f \in r B_H} \left \lvert \frac{1}{n} \sum_{i = 1}^n \eps_i f(X_i) \right \rvert \; \middle \vert X \right )
\end{equation*}
almost surely. By Lemma \ref{lEmpProc} with $\sigma^2 = 1$, we have
\begin{equation*}
\expect \left ( \sup_{f \in r B_H} \left \lvert \frac{1}{n} \sum_{i = 1}^n \eps_i f(X_i) \right \rvert \right ) \leq \frac{\lVert k \rVert_\infty r}{n^{1/2}}.
\end{equation*}
The result follows.
\end{proof}
We move the bound on the distance between $\hat h_r$ and $h_r$ from the $L^2(P_n)$ norm to the $L^2 (P)$ norm for $r > 0$ and $h_r \in r B_H$.

\begin{corollary} \label{cTrueNorm}
Assume \ref{Y1} and \ref{H}. Let $h_r \in r B_H$. We have
\begin{equation*}
\expect \left (\lVert \hat h_r -  h_r \rVert_{L^2 (P)}^2 \right ) \leq \frac{4 \lVert k \rVert_\infty \sigma r}{n^{1/2}} + \frac{32 \lVert k \rVert_\infty^2 r^2}{n^{1/2}} + 4 \lVert h_r - g \rVert_{L^2 (P)}^2.
\end{equation*}
\end{corollary}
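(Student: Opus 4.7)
The plan is to combine the two preceding lemmas via the obvious decomposition
\begin{equation*}
\lVert \hat h_r - h_r \rVert_{L^2(P)}^2 \leq \lVert \hat h_r - h_r \rVert_{L^2(P_n)}^2 + \left\lvert \lVert \hat h_r - h_r \rVert_{L^2(P_n)}^2 - \lVert \hat h_r - h_r \rVert_{L^2(P)}^2 \right\rvert.
\end{equation*}
Taking expectations on both sides and treating the two terms separately should directly yield the stated bound.

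For the first term on the right, Lemma \ref{lEmpNorm} applies verbatim, contributing $\tfrac{4 \lVert k \rVert_\infty \sigma r}{n^{1/2}} + 4 \lVert h_r - g \rVert_{L^2(P)}^2$. For the second term, the key observation is that both $\hat h_r$ and $h_r$ lie in $r B_H$, and so by the triangle inequality their difference $\hat h_r - h_r$ lies in $2 r B_H$. We may therefore dominate the absolute value by the supremum
\begin{equation*}
\sup_{f \in 2 r B_H} \left\lvert \lVert f \rVert_{L^2(P_n)}^2 - \lVert f \rVert_{L^2(P)}^2 \right\rvert,
\end{equation*}
and Lemma \ref{lSwapNorm} applied with $r$ replaced by $2r$ bounds the expectation of this quantity by $\tfrac{8 \lVert k \rVert_\infty^2 (2r)^2}{n^{1/2}} = \tfrac{32 \lVert k \rVert_\infty^2 r^2}{n^{1/2}}$. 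Summing the two contributions gives the claim.

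I do not anticipate any substantial obstacle, since the heavy lifting (symmetrisation and the contraction principle for the variance-to-$L^2$ swap, and the reproducing kernel / Cauchy--Schwarz argument for the noise term) has already been carried out in Lemmas \ref{lEmpNorm} and \ref{lSwapNorm}. The only subtlety worth flagging is the doubling of the radius from $r$ to $2r$ when passing to the supremum over $2 r B_H$, which is exactly what produces the factor of $32$ rather than $8$ in the middle term of the stated bound; this is the one place where being off by a factor of two would change the constants. Measurability of the supremum, needed to take expectations, follows from separability of $r B_H$ under \ref{H} as discussed in Section \ref{sIvanov}.
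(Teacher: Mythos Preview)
Your proposal is correct and matches the paper's proof essentially line for line: the paper uses the same decomposition, invokes Lemma \ref{lEmpNorm} for the empirical term, and then applies Lemma \ref{lSwapNorm} with radius $2r$ (using $\hat h_r - h_r \in 2rB_H$) to pick up the $\tfrac{32\lVert k\rVert_\infty^2 r^2}{n^{1/2}}$ term. The only cosmetic difference is that the paper bounds the signed difference $\lVert\hat h_r - h_r\rVert_{L^2(P)}^2 - \lVert\hat h_r - h_r\rVert_{L^2(P_n)}^2$ directly by the supremum rather than first passing through the absolute value, but this changes nothing.
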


\begin{proof}
By Lemma \ref{lEmpNorm}, we have
\begin{equation*}
\expect \left (\lVert \hat h_r -  h_r \rVert_{L^2 (P_n)}^2 \right ) \leq \frac{4 \lVert k \rVert_\infty \sigma r}{n^{1/2}} + 4 \lVert h_r - g \rVert_{L^2 (P)}^2.
\end{equation*}
Since $\hat h_r -  h_r \in 2 r B_H$, by Lemma \ref{lSwapNorm} we have
\begin{align*}
\expect \left (\lVert \hat h_r -  h_r \rVert_{L^2 (P)}^2 - \lVert \hat h_r -  h_r \rVert_{L^2 (P_n)}^2 \right ) &\leq \expect \left (\sup_{f \in 2 r B_H} \left \lvert \lVert f \rVert_{L^2 (P_n)}^2 - \lVert f \rVert_{L^2 (P)}^2 \right \rvert \right ) \\
&\leq \frac{32 \lVert k \rVert_\infty^2 r^2}{n^{1/2}}.
\end{align*}
The result follows.
\end{proof}
We bound the distance between $\hat h_r$ and $g$ in the $L^2(P)$ norm for $r > 0$ to prove Theorem \ref{tBound}.

{\bf Proof of Theorem \ref{tBound}}
Fix $h_r \in r B_H$. We have
\begin{align*}
\lVert \hat h_r - g \rVert_{L^2 (P)}^2 &\leq \left ( \lVert \hat h_r - h_r \rVert_{L^2 (P)}^2 + \lVert h_r - g \rVert_{L^2 (P)}^2 \right )^2 \\
&\leq 2 \lVert \hat h_r - h_r \rVert_{L^2 (P)}^2 + 2 \lVert h_r - g \rVert_{L^2 (P)}^2.
\end{align*}
By Corollary \ref{cTrueNorm}, we have
\begin{equation*}
\expect \left (\lVert \hat h_r -  h_r \rVert_{L^2 (P)}^2 \right ) \leq \frac{4 \lVert k \rVert_\infty \sigma r}{n^{1/2}} + \frac{32 \lVert k \rVert_\infty^2 r^2}{n^{1/2}} + 4 \lVert h_r - g \rVert_{L^2 (P)}^2.
\end{equation*}
Hence,
\begin{equation*}
\expect \left (\lVert \hat h_r -  g \rVert_{L^2 (P)}^2 \right ) \leq \frac{8 \lVert k \rVert_\infty \sigma r}{n^{1/2}} + \frac{64 \lVert k \rVert_\infty^2 r^2}{n^{1/2}} + 10 \lVert h_r - g \rVert_{L^2 (P)}^2.
\end{equation*}
Taking an infimum over $h_r \in r B_H$ proves the result. \hfill \BlackBox

We assume \ref{g1} to prove Theorem \ref{tInterBound}.

{\bf Proof of Theorem \ref{tInterBound}}
The initial bound follows from Theorem \ref{tBound} and \eqref{eApproxInter}. Based on this bound, setting
\begin{equation*}
r = D_1  \lVert k \rVert_\infty^{- (1-\beta)} B n^{(1-\beta)/4}
\end{equation*}
gives
\begin{equation*}
\expect \left (\lVert \hat h_r -  g \rVert_{L^2 (P)}^2 \right ) \leq \left (64 D_1^2 + 10 D_1^{-2\beta/(1-\beta)} \right ) \lVert k \rVert_\infty^{2 \beta} B^2 n^{- \beta/2} + 8 D_1 \lVert k \rVert_\infty^\beta B \sigma n^{-(1+\beta)/4}.
\end{equation*}
Hence, the next bound follows with
\begin{equation*}
D_2 = 64 D_1^2 + 10 D_1^{-2\beta/(1-\beta)} \text{ and } D_3 = 8 D_1.
\end{equation*} \hfill \BlackBox

\section{Proof of Expectation Bound for Bounded Regression Function}

We can obtain a bound on the distance between $V \hat h_r$ and $V h_r$ in the $L^2(P_n)$ norm for $r > 0$ and $h_r \in r B_H$ from Lemma \ref{lEmpNorm}. The following lemma is useful for moving the bound on the distance between $V \hat h_r$ and $V h_r$ from the $L^2(P_n)$ norm to the $L^2 (P)$ norm.

\begin{lemma} \label{lBddSwapNorm}
Assume \ref{H}. We have
\begin{equation*}
\expect \left (\sup_{f_1, f_2 \in r B_H} \left \lvert \lVert V f_1 - V f_2 \rVert_{L^2 (P_n)}^2 - \lVert V f_2 - V f_2 \rVert_{L^2 (P)}^2 \right \rvert \right ) \leq \frac{64 \lVert k \rVert_\infty C r}{n^{1/2}}.
\end{equation*}
\end{lemma}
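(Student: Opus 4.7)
The plan is to follow exactly the recipe sketched in the paragraph before the lemma: symmetrise, remove the squares by the contraction principle using the $2C$-bound on $V f_1 - V f_2$, split the double supremum via the triangle inequality, apply the contraction principle a second time to strip off $V$, and finally invoke Lemma \ref{lEmpProc}.

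First I introduce \iid Rademacher random variables $\eps_i$ on $(\Omega, \mathcal{F}, \prob)$, independent of the $X_i$, and apply symmetrisation (Lemma 2.3.1 of \citet{van1996weak}) to bound the left-hand side by
\begin{equation*}
2 \expect \left( \sup_{f_1, f_2 \in r B_H} \left\lvert \frac{1}{n} \sum_{i=1}^n \eps_i (V f_1(X_i) - V f_2(X_i))^2 \right\rvert \right).
\end{equation*}
Since $\lVert V f_1 - V f_2 \rVert_\infty \leq 2C$ for all $f_1, f_2 \in r B_H$, the map $t \mapsto t^2/(4C)$ is a contraction on $[-2C, 2C]$ vanishing at $0$. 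Conditioning on $X$ and applying Theorem 3.2.1 of \citet{gine2015mathematical} removes the squares, picking up a factor $4C \cdot 2 = 8C$, so the display above is at most
\begin{equation*}
16 C \expect \left( \sup_{f_1, f_2 \in r B_H} \left\lvert \frac{1}{n} \sum_{i=1}^n \eps_i (V f_1(X_i) - V f_2(X_i)) \right\rvert \right).
\end{equation*}

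Next I use the triangle inequality to split the supremum over $(f_1, f_2) \in (r B_H)^2$ into two copies of $\sup_{f \in r B_H} \lvert \tfrac{1}{n} \sum_i \eps_i V f(X_i) \rvert$, which costs another factor of $2$. Since $V$ is itself $1$-Lipschitz with $V(0) = 0$, a second application of Theorem 3.2.1 of \citet{gine2015mathematical} (again conditionally on $X$) removes $V$ at the cost of another factor $2$. This reduces the problem to bounding
\begin{equation*}
\expect \left( \sup_{f \in r B_H} \left\lvert \frac{1}{n} \sum_{i=1}^n \eps_i f(X_i) \right\rvert \right),
\end{equation*}
which is bounded by $\lVert k \rVert_\infty r / n^{1/2}$ via Lemma \ref{lEmpProc} applied with $\sigma^2 = 1$, using that the $\eps_i$ are mean-zero, unit-variance and conditionally uncorrelated given $X$. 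Multiplying the accumulated constants gives $2 \cdot 8 C \cdot 2 \cdot 2 \cdot \lVert k \rVert_\infty r / n^{1/2} = 64 \lVert k \rVert_\infty C r / n^{1/2}$.

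There is no real obstacle here beyond careful bookkeeping; the only items to verify are that both maps to which the contraction principle is applied ($t \mapsto t^2/(4C)$ on $[-2C, 2C]$ and $V$ on $\real$) satisfy the hypotheses (contraction, vanishing at $0$) of Theorem 3.2.1 of \citet{gine2015mathematical}, and that the reduction to the uncentred Rademacher process in Lemma \ref{lEmpProc} is legitimate after the two contraction steps.
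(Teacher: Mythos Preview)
Your proposal is correct and follows exactly the same route as the paper: symmetrisation, contraction principle to remove the squares via $t \mapsto t^2/(4C)$, triangle inequality to split the double supremum, a second contraction to strip off $V$, and finally Lemma \ref{lEmpProc} with $\sigma^2 = 1$. The constants are tracked correctly and match the paper's $64 \lVert k \rVert_\infty C r / n^{1/2}$.
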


\begin{proof}
Let the $\eps_i$ for $1 \leq i \leq n$ be \iid Rademacher random variables on $(\Omega, \mathcal{F}, \prob)$, independent of the $X_i$. Lemma 2.3.1 of \citet{van1996weak} shows
\begin{equation*}
\expect \left (\sup_{f_1, f_2 \in r B_H} \left \lvert \frac{1}{n} \sum_{i = 1}^n (V f_1 (X_i) - V f_2 (X_i))^2 - \int (V f_1 - V f_2)^2 dP \right \rvert \right )
\end{equation*}
is at most
\begin{equation*}
2 \expect \left (\sup_{f_1, f_2 \in r B_H} \left \lvert \frac{1}{n} \sum_{i = 1}^n \eps_i (V f_1 (X_i) - V f_2 (X_i))^2 \right \rvert \right )
\end{equation*}
by symmetrisation. Since $\lvert V f_1 (X_i) - V f_2 (X_i) \rvert \leq 2 C$ for all $f_1, f_2 \in r B_H$, we find
\begin{equation*}
\frac{(V f_1 (X_i) - V f_2 (X_i))^2}{4 C}
\end{equation*}
is a contraction vanishing at 0 as a function of $V f_1 (X_i) - V f_2 (X_i)$ for all $1 \leq i \leq n$. By Theorem 3.2.1 of \citet{gine2015mathematical}, we have
\begin{equation*}
\expect \left (\sup_{f_1, f_2 \in r B_H} \left \lvert \frac{1}{n} \sum_{i = 1}^n \eps_i \frac{(V f_1 (X_i) - V f_2 (X_i))^2}{4 C} \right \rvert \; \middle \vert X \right )
\end{equation*}
is at most
\begin{equation*}
2 \expect \left (\sup_{f_1, f_2 \in r B_H} \left \lvert \frac{1}{n} \sum_{i = 1}^n \eps_i (V f_1 (X_i) - V f_2 (X_i)) \right \rvert \; \middle \vert X \right )
\end{equation*}
almost surely. Therefore,
\begin{equation*}
\expect \left ( \sup_{f_1, f_2 \in r B_H} \left \lvert \frac{1}{n} \sum_{i = 1}^n (V f_1 (X_i) - V f_2 (X_i))^2 - \int (V f_1 - V f_2)^2 dP \right \rvert \right )
\end{equation*}
is at most
\begin{equation*}
16 C \expect \left (\sup_{f_1, f_2 \in r B_H} \left \lvert \frac{1}{n} \sum_{i = 1}^n \eps_i (V f_1 (X_i) - V f_2 (X_i)) \right \rvert \right ) \leq 32 C \expect \left (\sup_{f \in r B_H} \left \lvert \frac{1}{n} \sum_{i = 1}^n \eps_i V f(X_i) \right \rvert \right )
\end{equation*}
by the triangle inequality. Again, by Theorem 3.2.1 of \citet{gine2015mathematical}, we have
\begin{equation*}
\expect \left ( \sup_{f_1, f_2 \in r B_H} \left \lvert \frac{1}{n} \sum_{i = 1}^n (V f_1 (X_i) - V f_2 (X_i))^2 - \int (V f_1 - V f_2)^2 dP \right \rvert \right )
\end{equation*}
is at most
\begin{equation*}
64 C \expect \left (\sup_{f \in r B_H} \left \lvert \frac{1}{n} \sum_{i = 1}^n \eps_i f(X_i) \right \rvert \right )
\end{equation*}
since $V$ is a contraction vanishing at 0. The result follows from Lemma \ref{lEmpProc} with $\sigma^2 = 1$.
\end{proof}
We move the bound on the distance between $V \hat h_r$ and $V h_r$ from the $L^2(P_n)$ norm to the $L^2 (P)$ norm for $r > 0$ and $h_r \in r B_H$.

\begin{corollary} \label{cBddTrueNorm}
Assume \ref{Y1} and \ref{H}. Let $h_r \in r B_H$. We have
\begin{equation*}
\expect \left (\lVert V \hat h_r - V h_r \rVert_{L^2 (P)}^2 \right ) \leq \frac{4 \lVert k \rVert_\infty (16 C + \sigma) r}{n^{1/2}} + 4 \lVert h_r - g \rVert_{L^2 (P)}^2.
\end{equation*}
\end{corollary}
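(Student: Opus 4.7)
The plan is to mirror the proof of Corollary \ref{cTrueNorm} but with the projection $V$ inserted in the right places, replacing the generic norm-swapping lemma (Lemma \ref{lSwapNorm}) with its clipped counterpart (Lemma \ref{lBddSwapNorm}). The goal is to split the bound into an empirical-norm piece plus a uniform deviation piece, each of which is already essentially available.

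First I would obtain a bound in the empirical norm $L^2(P_n)$. The key observation, already highlighted in the text preceding the statement, is that $V$ is a $1$-Lipschitz contraction of $\real$, so pointwise
\begin{equation*}
(V \hat h_r(X_i) - V h_r(X_i))^2 \leq (\hat h_r(X_i) - h_r(X_i))^2,
\end{equation*}
and hence $\lVert V \hat h_r - V h_r \rVert_{L^2(P_n)}^2 \leq \lVert \hat h_r - h_r \rVert_{L^2(P_n)}^2$. Taking expectations and applying Lemma \ref{lEmpNorm} immediately gives
\begin{equation*}
\expect \left ( \lVert V \hat h_r - V h_r \rVert_{L^2(P_n)}^2 \right ) \leq \frac{4 \lVert k \rVert_\infty \sigma r}{n^{1/2}} + 4 \lVert h_r - g \rVert_{L^2(P)}^2.
\end{equation*}

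Next I would move this bound from $L^2(P_n)$ to $L^2(P)$. Since both $\hat h_r$ and $h_r$ lie in $r B_H$, we have
\begin{equation*}
\lVert V \hat h_r - V h_r \rVert_{L^2(P)}^2 - \lVert V \hat h_r - V h_r \rVert_{L^2(P_n)}^2 \leq \sup_{f_1, f_2 \in r B_H} \left\lvert \lVert V f_1 - V f_2 \rVert_{L^2(P_n)}^2 - \lVert V f_1 - V f_2 \rVert_{L^2(P)}^2 \right\rvert,
\end{equation*}
and Lemma \ref{lBddSwapNorm} bounds the expectation of this supremum by $64 \lVert k \rVert_\infty C r / n^{1/2}$.

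Adding the two bounds yields
\begin{equation*}
\expect \left ( \lVert V \hat h_r - V h_r \rVert_{L^2(P)}^2 \right ) \leq \frac{4 \lVert k \rVert_\infty \sigma r}{n^{1/2}} + \frac{64 \lVert k \rVert_\infty C r}{n^{1/2}} + 4 \lVert h_r - g \rVert_{L^2(P)}^2,
\end{equation*}
and combining the first two terms gives precisely $4 \lVert k \rVert_\infty (16 C + \sigma) r / n^{1/2}$. There is no real obstacle here: both lemmas have been proved, and the only subtle point is noticing that the gain from clipping is entirely captured by Lemma \ref{lBddSwapNorm} (where $C$ replaces the $\lVert k \rVert_\infty r$ factor one would get in the unclipped version), which is why the $r^2$ term in Corollary \ref{cTrueNorm} is replaced by an $r$ term proportional to $C$.
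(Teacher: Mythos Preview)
Your proposal is correct and follows essentially the same argument as the paper: first use the $1$-Lipschitz property of $V$ together with Lemma \ref{lEmpNorm} to bound $\expect\lVert V\hat h_r - V h_r\rVert_{L^2(P_n)}^2$, then use Lemma \ref{lBddSwapNorm} to control the deviation between the $L^2(P_n)$ and $L^2(P)$ norms over $r B_H$, and add. The final combination $4\lVert k\rVert_\infty\sigma r/n^{1/2} + 64\lVert k\rVert_\infty C r/n^{1/2} = 4\lVert k\rVert_\infty(16C+\sigma)r/n^{1/2}$ is exactly what the paper obtains.
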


\begin{proof}
By Lemma \ref{lEmpNorm}, we have
\begin{equation*}
\expect \left (\lVert \hat h_r -  h_r \rVert_{L^2 (P_n)}^2 \right ) \leq \frac{4 \lVert k \rVert_\infty \sigma r}{n^{1/2}} + 4 \lVert h_r - g \rVert_{L^2 (P)}^2,
\end{equation*}
so
\begin{equation*}
\expect \left (\lVert V \hat h_r - V h_r \rVert_{L^2 (P_n)}^2 \right ) \leq \frac{4 \lVert k \rVert_\infty \sigma r}{n^{1/2}} + 4 \lVert h_r - g \rVert_{L^2 (P)}^2.
\end{equation*}
Since $\hat h_r, h_r \in r B_H$, by Lemma \ref{lBddSwapNorm} we have
\begin{align*}
&\expect \left (\lVert V \hat h_r -  V h_r \rVert_{L^2 (P)}^2 - \lVert V \hat h_r - V h_r \rVert_{L^2 (P_n)}^2 \right ) \\
\leq \; &\expect \left (\sup_{f_1, f_2 \in r B_H} \left \lvert \lVert V f_1 - V f_2 \rVert_{L^2 (P_n)}^2 - \lVert V f_2 - V f_2 \rVert_{L^2 (P)}^2 \right \rvert \right ) \\
\leq \; &\frac{64 \lVert k \rVert_\infty C r}{n^{1/2}}.
\end{align*}
The result follows.
\end{proof}
We assume \ref{g2} to bound the distance between $V \hat h_r$ and $g$ in the $L^2(P)$ norm for $r > 0$ and prove Theorem \ref{tBddBound}.

{\bf Proof of Theorem \ref{tBddBound}}
Fix $h_r \in r B_H$. We have
\begin{align*}
\lVert V \hat h_r - g \rVert_{L^2 (P)}^2 &\leq \left ( \lVert V \hat h_r - V h_r \rVert_{L^2 (P)}^2 + \lVert V h_r - g \rVert_{L^2 (P)}^2 \right )^2 \\
&\leq 2 \lVert V \hat h_r - V h_r \rVert_{L^2 (P)}^2 + 2 \lVert V h_r - g \rVert_{L^2 (P)}^2 \\
&\leq 2 \lVert V \hat h_r - V h_r \rVert_{L^2 (P)}^2 + 2 \lVert h_r - g \rVert_{L^2 (P)}^2.
\end{align*}
By Corollary \ref{cBddTrueNorm}, we have
\begin{equation*}
\expect \left (\lVert V \hat h_r - V h_r \rVert_{L^2 (P)}^2 \right ) \leq \frac{4 \lVert k \rVert_\infty (16 C + \sigma) r}{n^{1/2}} + 4 \lVert h_r - g \rVert_{L^2 (P)}^2.
\end{equation*}
Hence,
\begin{equation*}
\expect \left (\lVert V \hat h_r -  g \rVert_{L^2 (P)}^2 \right ) \leq \frac{8 \lVert k \rVert_\infty (16 C + \sigma) r}{n^{1/2}} + 10 \lVert h_r - g \rVert_{L^2 (P)}^2.
\end{equation*}
Taking an infimum over $h_r \in r B_H$ proves the result. \hfill \BlackBox

We assume \ref{g1} to prove Theorem \ref{tBddInterBound}.

{\bf Proof of Theorem \ref{tBddInterBound}}
The initial bound follows from Theorem \ref{tBddBound} and \eqref{eApproxInter}. Based on this bound, setting
\begin{equation*}
r = D_1 \lVert k \rVert_\infty^{- (1-\beta)/(1+\beta)} B^{2/(1+\beta)} (16 C + \sigma)^{- (1-\beta)/(1+\beta)} n^{(1-\beta)/(2 (1+\beta))}
\end{equation*}
gives
\begin{equation*}
\expect \left (\lVert V \hat h_r -  g \rVert_{L^2 (P)}^2 \right )
\end{equation*}
is at most
\begin{equation*}
\left (8 D_1 + 10 D_1^{-2\beta/(1-\beta)} \right ) \lVert k \rVert_\infty^{2 \beta/(1+\beta)} B^{2/(1+\beta)} (16 C + \sigma)^{2 \beta/(1+\beta)} n^{- \beta/(1+\beta)}.
\end{equation*}
Hence, the next bound follows with
\begin{equation*}
D_2 = 8 D_1 + 10 D_1^{-2\beta/(1-\beta)}.
\end{equation*} \hfill \BlackBox

\section{Proof of Expectation Bound for Validation}

We need to introduce some definitions for stochastic processes. A stochastic process $W$ on $(\Omega, \mathcal{F})$ indexed by a metric space $(M, d)$ is $d^2$-subgaussian if it is centred and $W(s) - W(t)$ is $d(s, t)^2$-subgaussian for all $s, t \in M$. $W$ is separable if there exists a countable set $M_0 \subs M$ such that the following holds for all $\omega \in \Omega_0$, where $\prob(\Omega_0) = 1$. For all $s \in M$ and $\eps > 0$, $W(s)$ is in the closure of $\{ W(t) : t \in M_0, d(s, t) \leq \eps  \}$.

We also need to introduce the concept of covering numbers for the next result. The covering number $N(M, d, \eps)$ is the minimum number of $d$-balls of size $\eps > 0$ needed to cover $M$.

The following lemma is useful for bounding the expectation of both of the suprema in Section \ref{sIvanov}.

\begin{lemma} \label{lGaussProc}
Assume \ref{H}. Let the $\eps_i$ be random variables on $(\Omega, \mathcal{F}, \prob)$ such that $(\tilde X_i, \eps_i)$ is \iid for $1 \leq i \leq \tilde n$ and let $\eps_i$ be $\tilde \sigma^2$-subgaussian given $\tilde X_i$. Let $r_0 \in R$, $f_0 = V \hat h_{r_0}$ and
\begin{equation*}
W(f) = \frac{1}{\tilde n} \sum_{i = 1}^{\tilde n} \eps_i (f(\tilde X_i) -  f_0 (\tilde X_i))
\end{equation*}
for $f \in F$. Then $W$ is $\tilde \sigma^2 \lVert \cdot \rVert_\infty^2/\tilde n$-subgaussian given $\tilde X$ and separable on $(F, \tilde \sigma \lVert \cdot \rVert_\infty/\tilde n^{1/2})$. Furthermore,
\begin{equation*}
\expect \left (\sup_{f \in F} \lvert W(f) \rvert \right ) \leq \frac{4 C \tilde \sigma}{\tilde n^{1/2}} \left ( \left ( 2 \log \left (2 + \frac{\lVert k \rVert_\infty^2 \rho^2}{C^2} \right ) \right )^{1/2} + \pi^{1/2} \right ).
\end{equation*}
\end{lemma}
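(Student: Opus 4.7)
The plan is to verify the hypotheses of a Dudley-type chaining bound for a separable subgaussian process and then evaluate the resulting entropy integral. To see that $W$ has $d^2$-subgaussian increments with $d(f_1,f_2)=\tilde\sigma\|f_1-f_2\|_\infty/\tilde n^{1/2}$, I condition on $\tilde X=(\tilde X_1,\ldots,\tilde X_{\tilde n})$. The \iid property of the pairs $(\tilde X_i,\eps_i)$ makes the $\eps_i$ conditionally independent given $\tilde X$, each $\tilde\sigma^2$-subgaussian given $\tilde X_i$. Hence $W(f_1)-W(f_2)=\tilde n^{-1}\sum_i \eps_i(f_1-f_2)(\tilde X_i)$ is, conditionally on $\tilde X$, a sum of independent scaled subgaussians with total variance proxy
\begin{equation*}
\tilde n^{-2}\sum_{i=1}^{\tilde n}(f_1-f_2)(\tilde X_i)^2\tilde\sigma^2 \leq \tilde\sigma^2\|f_1-f_2\|_\infty^2/\tilde n = d(f_1,f_2)^2,
\end{equation*}
and integrating the conditional mgf bound over $\tilde X$ yields the unconditional subgaussianity. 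For separability, take a countable dense subset $R_0\subs R$ (possible since $R$ is compact, hence separable); Lemma~\ref{lContEst}, together with $\|\cdot\|_\infty\leq\lVert k\rVert_\infty\|\cdot\|_H$ and the fact that $V$ is $1$-Lipschitz, shows $r\mapsto V\hat h_r$ is $\|\cdot\|_\infty$-continuous, so $F_0=\{V\hat h_r:r\in R_0\}$ is countable and dense in $(F,d)$. On the probability-one event that every $|\eps_i|$ is finite, $f\mapsto W(f)$ is Lipschitz in $\|\cdot\|_\infty$, hence continuous in $d$, which gives the required approximation of $W(f)$ by $W(f')$ with $f'\in F_0$.

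Next, I would bound the covering numbers of $(F,\|\cdot\|_\infty)$. The quantitative form of Lemma~\ref{lContEst} (as already invoked after Lemma~\ref{lCalcEst}) gives $\lVert\hat h_{r_1}-\hat h_{r_2}\rVert_H \leq (2\rho|r_1-r_2|+|r_1-r_2|^2)^{1/2}$ for $r_1,r_2\in[0,\rho]$, so
\begin{equation*}
\lVert V\hat h_{r_1}-V\hat h_{r_2}\rVert_\infty \leq \lVert k\rVert_\infty(2\rho\delta+\delta^2)^{1/2} \quad\text{whenever } |r_1-r_2|\leq\delta.
\end{equation*}
Partitioning $[0,\rho]$ into $\lceil\rho/\delta\rceil$ intervals of length $\delta$ and picking one representative per interval produces an $L^\infty$-cover of $F$; optimising $\delta$ for a target radius $u\leq C$ yields $N(F,\|\cdot\|_\infty,u)\leq 1+c\rho^2\lVert k\rVert_\infty^2/u^2$ for an absolute $c$. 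Since $\lVert f\rVert_\infty\leq C$ for every $f\in F$, the diameter of $F$ is at most $2C$, so $N(F,\|\cdot\|_\infty,u)=1$ whenever $u\geq 2C$.

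Finally I apply Theorem~2.3.7 of \citet{gine2015mathematical} to the separable $d^2$-subgaussian process $W$ with base point $f_0$; using $W(f_0)=0$,
\begin{equation*}
\expect\sup_{f\in F}|W(f)| = \expect\sup_{f\in F}|W(f)-W(f_0)| \leq K\int_0^{D}\sqrt{\log N(F,d,\eps)}\,d\eps
\end{equation*}
for a universal constant $K$, where $D\leq 2C\tilde\sigma/\tilde n^{1/2}$ is the $d$-diameter of $F$. Changing variables via $u=\eps\tilde n^{1/2}/\tilde\sigma$ reduces matters to estimating $\int_0^{2C}\sqrt{\log N(F,\|\cdot\|_\infty,u)}\,du$. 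Using the elementary splitting $\log(a+b/u^2)\leq\log(a+b/C^2)+2\log(C/u)$ valid for $u\leq C$, combined with the Gaussian-type identity $\int_0^1\sqrt{-\log v}\,dv=\sqrt\pi/2$, this integral is bounded by a constant multiple of $C\sqrt{2\log(2+\lVert k\rVert_\infty^2\rho^2/C^2)}+C\sqrt\pi$. Collecting factors then yields the $4C\tilde\sigma/\tilde n^{1/2}$ prefactor and the stated bracketed sum.

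The main obstacle will be tracking absolute constants through the chaining step and the integral estimate so as to reproduce the precise coefficients $4$, $\sqrt 2$, and $\sqrt\pi$ in the statement. The particular combination $\sqrt{2\log(2+\cdots)}+\sqrt\pi$ strongly suggests that rather than a fully continuous Dudley integral one uses a single dyadic split at scale $u=C$: the net at scale $C$ contributes the finite-maximum term $\sqrt{2\lVert\cdot\rVert^2\log N}$ of at most $N\leq 2+\lVert k\rVert_\infty^2\rho^2/C^2$ balls, while the oscillation between net points is controlled by a tail of the entropy integral that evaluates to $\sqrt\pi$ after the $\int\sqrt{-\log v}\,dv$ computation. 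Mirroring that two-scale argument should produce the exact constants.
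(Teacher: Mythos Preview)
Your approach is essentially the paper's: verify conditional subgaussian increments, separability via Lemma~\ref{lContEst}, then apply the chaining bound Theorem~2.3.7 of \citet{gine2015mathematical} and evaluate the entropy integral. Two remarks will save you effort.

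First, you are re-deriving the covering bound and the entropy-integral estimate that the paper already provides as Lemmas~\ref{lCoverNum} and~\ref{lCoverInt}; just cite them.

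Second, your final paragraph misdiagnoses the origin of the constants. There is no two-scale or dyadic-split argument. The precise form of Theorem~2.3.7 used here gives
\begin{equation*}
\expect\Bigl(\sup_{f\in F}\lvert W(f)\rvert \Bigm\vert \tilde X, X, Y\Bigr) \leq \expect(\lvert W(f_0)\rvert) + 2^{5/2}\int_0^{D/2}\bigl(\log\bigl(2N(F,d,\eps)\bigr)\bigr)^{1/2}d\eps,
\end{equation*}
with the factor $2$ \emph{inside} the logarithm and the upper limit equal to the half-diameter $D/2\leq C\tilde\sigma/\tilde n^{1/2}$. After the change of variables $u=\eps\tilde n^{1/2}/\tilde\sigma$ one invokes Lemma~\ref{lCoverInt} with $a=2$ and $L=C$, which produces $\bigl(\log(2+\lVert k\rVert_\infty^2\rho^2/C^2)\bigr)^{1/2}C+(\pi/2)^{1/2}C$; the ``$2+$'' is exactly the product $a\cdot(1+\cdot)$ from the covering bound, not a separate net at scale $C$. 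Multiplying by $2^{5/2}\tilde\sigma/\tilde n^{1/2}=4\sqrt{2}\,\tilde\sigma/\tilde n^{1/2}$ then yields the stated $4C\tilde\sigma/\tilde n^{1/2}$ prefactor together with $\sqrt{2\log(2+\cdots)}+\sqrt{\pi}$.
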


\begin{proof}
Let $W_i (f) = \eps_i (f(\tilde X_i) -  f_0 (\tilde X_i))$ for $1 \leq i \leq \tilde n$ and $f \in F$. Note that the $W_i$ are independent and centred. Since $W_i (f_1) - W_i (f_2)$ is $\tilde \sigma^2 \lVert f_1 - f_2 \rVert_\infty^2$-subgaussian given $\tilde X_i$ for $1 \leq i \leq \tilde n$ and $f_1, f_2 \in F$, the process $W$ is $\tilde \sigma^2 \lVert \cdot \rVert_\infty^2/\tilde n$-subgaussian given $\tilde X$. By Lemma \ref{lContEst}, we have that $(F, \tilde \sigma \lVert \cdot \rVert_\infty/\tilde n^{1/2})$ is separable. Hence, $W$ is separable on $(F, \tilde \sigma \lVert \cdot \rVert_\infty/\tilde n^{1/2})$ since it is continuous. The diameter of $(F, \tilde \sigma \lVert \cdot \rVert_\infty/\tilde n^{1/2})$ is
\begin{equation*}
D = \sup_{f_1, f_2 \in F} \tilde \sigma \lVert f_1 - f_2 \rVert_\infty/\tilde n^{1/2} \leq 2 C \tilde \sigma/\tilde n^{1/2}.
\end{equation*}
We have
\begin{align*}
\int_0^\infty ( \log(N(F, \tilde \sigma \lVert \cdot \rVert_\infty/\tilde n^{1/2}, \eps)))^{1/2} d\eps &= \int_0^\infty ( \log(N(F, \lVert \cdot \rVert_\infty, \tilde n^{1/2} \eps / \tilde \sigma)))^{1/2} d\eps \\
&= \frac{\tilde \sigma}{\tilde n^{1/2}} \int_0^\infty ( \log(N(F, \lVert \cdot \rVert_\infty, u)))^{1/2} du.
\end{align*}
This is finite by Lemma \ref{lCoverInt}. Hence, by Theorem 2.3.7 of \citet{gine2015mathematical} and Lemma \ref{lCoverInt}, we have
\begin{equation*}
\expect \left (\sup_{f \in F} \lvert W(f) \rvert \middle \vert \tilde X, X, Y \right )
\end{equation*}
is at most
\begin{align*}
&\expect(\lvert W(f_0) \rvert \; \vert \tilde X, X, Y) + 2^{5/2} \int_0^{C \tilde \sigma/\tilde n^{1/2}} ( \log(2 N(F, \tilde \sigma \lVert \cdot \rVert_\infty/\tilde n^{1/2}, \eps)))^{1/2} d\eps \\
= \; &2^{5/2} \int_0^{C \tilde \sigma/\tilde n^{1/2}} ( \log(2 N(F, \lVert \cdot \rVert_\infty, \tilde n^{1/2} \eps / \tilde \sigma)))^{1/2} d\eps \\
= \; &\frac{2^{5/2} \tilde \sigma}{\tilde n^{1/2}} \int_0^C ( \log(2 N(F, \lVert \cdot \rVert_\infty, u)))^{1/2} du \\
\leq \; &\frac{2^{5/2} \tilde \sigma}{\tilde n^{1/2}} \left ( \left ( \log \left (2 + \frac{\lVert k \rVert_\infty^2 \rho^2}{C^2} \right ) \right )^{1/2} C + \left ( \frac{\pi}{2} \right )^{1/2} C \right ) \\
= \; &\frac{4 C \tilde \sigma}{\tilde n^{1/2}} \left ( \left ( 2 \log \left (2 + \frac{\lVert k \rVert_\infty^2 \rho^2}{C^2} \right ) \right )^{1/2} + \pi^{1/2} \right )
\end{align*}
almost surely, noting $W(f_0) = 0$. The result follows.
\end{proof}
We bound the distance between $V \hat h_{\hat r}$ and $V \hat h_{r_0}$ in the $L^2(\tilde P_{\tilde n})$ norm for $r_0 \in R$.

\begin{lemma} \label{lValEmpNorm}
Assume \ref{H} and \ref{tY}. Let $r_0 \in R$. We have
\begin{equation*}
\expect \left (\lVert V \hat h_{\hat r} -  V \hat h_{r_0} \rVert_{L^2 (\tilde P_{\tilde n})}^2 \right )
\end{equation*}
is at most
\begin{equation*}
\frac{16 C \tilde \sigma}{\tilde n^{1/2}} \left ( \left ( 2 \log \left (2 + \frac{\lVert k \rVert_\infty^2 \rho^2}{C^2} \right ) \right )^{1/2} + \pi^{1/2} \right ) + 4 \expect \left ( \lVert V \hat h_{r_0} - g \rVert_{L^2 (P)}^2 \right ).
\end{equation*}
\end{lemma}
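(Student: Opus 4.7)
The plan is to apply Lemma \ref{lIneq} with $A = F$ on the validation sample, taking $f = V \hat h_{r_0}$. Since $\hat r$ minimises the empirical validation risk over $r \in R$, the element $V \hat h_{\hat r}$ minimises $\tilde n^{-1} \sum_{i=1}^{\tilde n} (f(\tilde X_i) - \tilde Y_i)^2$ over $f \in F$, and the fact that $F$ depends on the training data $(X, Y)$ is explicitly permitted by Lemma \ref{lIneq}. Writing $\eps_i = \tilde Y_i - g(\tilde X_i)$, this yields
\begin{equation*}
\lVert V \hat h_{\hat r} - V \hat h_{r_0} \rVert_{L^2(\tilde P_{\tilde n})}^2 \leq \frac{4}{\tilde n} \sum_{i=1}^{\tilde n} \eps_i \left ( V \hat h_{\hat r}(\tilde X_i) - V \hat h_{r_0}(\tilde X_i) \right ) + 4 \lVert V \hat h_{r_0} - g \rVert_{L^2(\tilde P_{\tilde n})}^2.
\end{equation*}

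The next step is to take a total expectation, first conditioning on the training data $(X, Y)$. Given $(X, Y)$, both $F$ and the reference $V \hat h_{r_0}$ are deterministic, while the validation sample is still i.i.d.\ with $\tilde X_i \sim P$ and, by \ref{tY}, with $\eps_i$ being $\tilde \sigma^2$-subgaussian given $\tilde X_i$. By the independence of the two samples, the conditional expectation of the approximation term equals $\lVert V \hat h_{r_0} - g \rVert_{L^2(P)}^2$, so this piece contributes $4 \expect \left ( \lVert V \hat h_{r_0} - g \rVert_{L^2(P)}^2 \right )$ after taking the unconditional expectation. For the empirical-process term, I would bound it by $4 \sup_{f \in F} |W(f)|$ with $W$ defined as in Lemma \ref{lGaussProc}, and then apply that lemma conditionally on $(X, Y)$ to obtain the almost sure bound
\begin{equation*}
\expect \left ( 4 \sup_{f \in F} |W(f)| \; \middle \vert \; X, Y \right ) \leq \frac{16 C \tilde \sigma}{\tilde n^{1/2}} \left ( \left ( 2 \log \left ( 2 + \frac{\lVert k \rVert_\infty^2 \rho^2}{C^2} \right ) \right )^{1/2} + \pi^{1/2} \right ).
\end{equation*}
Taking unconditional expectation and adding the two contributions gives the claimed inequality.

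The main obstacle is the conditioning: both $F$ and the reference point $V \hat h_{r_0}$ are random through the training data, whereas Lemma \ref{lGaussProc} is stated as if they were fixed. The cleanest resolution is to verify that all hypotheses of Lemma \ref{lGaussProc} continue to hold under the conditional distribution given $(X, Y)$, which is legitimate because \ref{tY} supplies the subgaussian property pointwise in $\tilde X_i$ and the validation sample is independent of the training sample. A minor bookkeeping point to be careful about is that the supremum of $|W(f)|$ over the random set $F$ is measurable; this follows from the continuity of $r \mapsto \hat h_r$ given by Lemma \ref{lContEst}, which reduces the supremum over $R$ to a countable one.
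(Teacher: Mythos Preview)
Your proposal is correct and follows essentially the same route as the paper: apply Lemma \ref{lIneq} on the validation sample with $A=F$ and $f=V\hat h_{r_0}$, take expectations, handle the approximation term via independence of the two samples, and bound the empirical-process term by $\sup_{f\in F}|W(f)|$ using Lemma \ref{lGaussProc}. Your explicit conditioning on $(X,Y)$ and your remark on measurability via Lemma \ref{lContEst} are exactly the points the paper relies on (the proof of Lemma \ref{lGaussProc} itself conditions on $\tilde X, X, Y$), so there is no substantive difference.
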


\begin{proof}
By Lemma \ref{lIneq} with $A = F$ and $n$, $X$, $Y$ and $P_n$ replaced by $\tilde n$, $\tilde X$, $\tilde Y$ and $\tilde P_{\tilde n}$, we have
\begin{equation*}
\lVert V \hat h_{\hat r} -  V \hat h_{r_0} \rVert_{L^2 (\tilde P_{\tilde n})}^2 \leq \frac{4}{\tilde n} \sum_{i = 1}^{\tilde n} (\tilde Y_i - g(\tilde X_i)) (V \hat h_{\hat r} (\tilde X_i) -  V \hat h_{r_0} (\tilde X_i)) + 4 \lVert V \hat h_{r_0} - g \rVert_{L^2 (\tilde P_{\tilde n})}^2.
\end{equation*}
We now bound the expectation of the right-hand side. We have
\begin{equation*}
\expect \left ( \lVert V \hat h_{r_0} - g \rVert_{L^2 (\tilde P_{\tilde n})}^2 \right ) = \expect \left ( \lVert V \hat h_{r_0} - g \rVert_{L^2 (P)}^2 \right ).
\end{equation*}
Let $f_0 = V \hat h_{r_0}$. By Lemma \ref{lGaussProc} with $\eps_i = Y_i - g(X_i)$, we have
\begin{align*}
&\expect \left ( \frac{1}{\tilde n} \sum_{i = 1}^{\tilde n} (\tilde Y_i - g(\tilde X_i)) (V \hat h_{\hat r} (\tilde X_i) -  V \hat h_{r_0} (\tilde X_i)) \right ) \\
\leq \; &\expect \left ( \sup_{f \in F} \left \lvert \frac{1}{\tilde n} \sum_{i = 1}^{\tilde n} (\tilde Y_i - g(\tilde X_i)) (f(\tilde X_i) -  f_0 (\tilde X_i)) \right \rvert \right ) \\
\leq \; &\frac{4 C \tilde \sigma}{\tilde n^{1/2}} \left ( \left ( 2 \log \left (2 + \frac{\lVert k \rVert_\infty^2 \rho^2}{C^2} \right ) \right )^{1/2} + \pi^{1/2} \right ).
\end{align*}
The result follows.
\end{proof}
The following lemma is useful for moving the bound on the distance between $V \hat h_{\hat r}$ and $V \hat h_{r_0}$ from the $L^2(\tilde P_{\tilde n})$ norm to the $L^2 (P)$ norm for $r_0 \in R$.

\begin{lemma} \label{lValSwapNorm}
Assume \ref{H}. Let $r_0 \in R$ and $f_0 = V \hat h_{r_0}$. We have
\begin{equation*}
\expect \left (\sup_{f \in F} \left \lvert \lVert f -  f_0 \rVert_{L^2 (\tilde P_{\tilde n})}^2 - \lVert f -  f_0 \rVert_{L^2 (P)}^2 \right \rvert \right ) \leq \frac{64 C^2}{\tilde n^{1/2}} \left ( \left ( 2 \log \left (2 + \frac{\lVert k \rVert_\infty^2 \rho^2}{C^2} \right ) \right )^{1/2} + \pi^{1/2} \right ).
\end{equation*}
\end{lemma}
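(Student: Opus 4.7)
The plan is to follow the same three-step template used in the proof of Lemma \ref{lBddSwapNorm} (symmetrisation $\to$ contraction $\to$ bound a Rademacher process), but replacing the final step with an appeal to Lemma \ref{lGaussProc}, so that we pick up the chaining factor with the $\log(2 + \lVert k \rVert_\infty^2 \rho^2 / C^2)$ instead of the simpler $\lVert k \rVert_\infty r / \tilde n^{1/2}$ coming from Lemma \ref{lEmpProc}. Let $\eps_i$, for $1 \leq i \leq \tilde n$, be \iid Rademacher random variables, independent of everything else. Because $F = \{V \hat h_r : r \in R\}$ is a fixed set of maps bounded in absolute value by $C$, we have $\lvert f(\tilde X_i) - f_0(\tilde X_i) \rvert \leq 2C$ for every $f \in F$, which gives us the Lipschitz constant we need for the contraction step.

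First, by the symmetrisation inequality (Lemma 2.3.1 of \citet{van1996weak}) applied to the class $\{(f - f_0)^2 : f \in F\}$, the quantity to be bounded is at most
\begin{equation*}
2 \expect\left( \sup_{f \in F} \left\lvert \frac{1}{\tilde n} \sum_{i=1}^{\tilde n} \eps_i (f(\tilde X_i) - f_0(\tilde X_i))^2 \right\rvert \right).
\end{equation*}
Second, the map $u \mapsto u^2/(4C)$ is a contraction on $[-2C, 2C]$ vanishing at $0$, so by Theorem 3.2.1 of \citet{gine2015mathematical}, conditionally on $\tilde X$,
\begin{equation*}
\expect\left( \sup_{f \in F} \left\lvert \frac{1}{\tilde n} \sum_{i=1}^{\tilde n} \eps_i (f(\tilde X_i) - f_0(\tilde X_i))^2 \right\rvert \;\middle\vert\; \tilde X \right) \leq 8 C \expect\left( \sup_{f \in F} \left\lvert \frac{1}{\tilde n} \sum_{i=1}^{\tilde n} \eps_i (f(\tilde X_i) - f_0(\tilde X_i)) \right\rvert \;\middle\vert\; \tilde X \right).
\end{equation*}

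Third, taking expectations, the right-hand side is $8C$ times the expected supremum of the process $W(f) = \tilde n^{-1} \sum_{i=1}^{\tilde n} \eps_i (f(\tilde X_i) - f_0(\tilde X_i))$ over $f \in F$. Since Rademacher variables are $1$-subgaussian and independent of $\tilde X_i$, the pairs $(\tilde X_i, \eps_i)$ are \iid and $\eps_i$ is $1$-subgaussian given $\tilde X_i$. Lemma \ref{lGaussProc} then applies with $\tilde \sigma = 1$ and gives
\begin{equation*}
\expect\left( \sup_{f \in F} \lvert W(f) \rvert \right) \leq \frac{4 C}{\tilde n^{1/2}} \left( \left( 2 \log\left(2 + \frac{\lVert k \rVert_\infty^2 \rho^2}{C^2}\right) \right)^{1/2} + \pi^{1/2} \right).
\end{equation*}
Multiplying by $2 \cdot 8C = 16C$ yields the claimed bound of $64 C^2 / \tilde n^{1/2}$ times the parenthesised factor.

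No single step is genuinely hard: symmetrisation and the contraction principle are completely analogous to Lemma \ref{lBddSwapNorm}, and the chaining work has already been carried out inside Lemma \ref{lGaussProc}. The only mild subtlety is making sure that the hypotheses of Lemma \ref{lGaussProc} really are met when the noise variables are Rademacher signs rather than the original $\tilde Y_i - g(\tilde X_i)$, i.e., that independence of $\eps_i$ from $\tilde X_i$ together with $1$-subgaussianity is enough for the lemma's conclusion to apply with $\tilde \sigma = 1$. This is straightforward once one enlarges the underlying probability space to carry the $\eps_i$.
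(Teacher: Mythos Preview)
Your proposal is correct and takes essentially the same approach as the paper: symmetrise, apply the contraction principle using $\lvert f-f_0\rvert\leq 2C$, then invoke Lemma \ref{lGaussProc} with $\tilde\sigma=1$. The only refinement in the paper's version is that, since $F$ and $f_0$ depend on the training data $(X,Y)$, the symmetrisation and contraction steps are carried out conditionally on $(X,Y)$ as well as on $\tilde X$; this is a minor bookkeeping point that does not affect your argument.
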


\begin{proof}
Let the $\eps_i$ be \iid Rademacher random variables on $(\Omega, \mathcal{F}, \prob)$ for $1 \leq i \leq \tilde n$, independent of $\tilde X$, $X$ and $Y$. Lemma 2.3.1 of \citet{van1996weak} shows
\begin{equation*}
\expect \left (\sup_{f \in F} \left \lvert \frac{1}{\tilde n} \sum_{i = 1}^{\tilde n} (f(\tilde X_i) - f_0 (\tilde X_i))^2 - \int (f - f_0)^2 dP \right \rvert \; \middle \vert X, Y \right )
\end{equation*}
is at most
\begin{equation*}
2 \expect \left (\sup_{f \in F} \left \lvert \frac{1}{\tilde n} \sum_{i = 1}^{\tilde n} \eps_i (f(\tilde X_i) - f_0 (\tilde X_i))^2 \right \rvert \; \middle \vert X, Y \right )
\end{equation*}
almost surely by symmetrisation. Since $\lvert f(\tilde X_i) - f_0 (\tilde X_i) \rvert \leq 2 C$ for all $f \in F$, we find
\begin{equation*}
\frac{(f(\tilde X_i) - f_0 (\tilde X_i))^2}{4 C}
\end{equation*}
is a contraction vanishing at 0 as a function of $f(\tilde X_i) - f_0 (\tilde X_i)$ for all $1 \leq i \leq \tilde n$. By Theorem 3.2.1 of \citet{gine2015mathematical}, we have
\begin{equation*}
\expect \left (\sup_{f \in F} \left \lvert \frac{1}{\tilde n} \sum_{i = 1}^{\tilde n} \eps_i \frac{(f(\tilde X_i) - f_0 (\tilde X_i))^2}{4 C} \right \rvert \; \middle \vert \tilde X, X, Y \right )
\end{equation*}
is at most
\begin{equation*}
2 \expect \left (\sup_{f \in F} \left \lvert \frac{1}{\tilde n} \sum_{i = 1}^{\tilde n} \eps_i (f(\tilde X_i) - f_0 (\tilde X_i)) \right \rvert \; \middle \vert \tilde X, X, Y \right )
\end{equation*}
almost surely. Therefore,
\begin{equation*}
\expect \left (\sup_{f \in F} \left \lvert \frac{1}{\tilde n} \sum_{i = 1}^{\tilde n} (f(\tilde X_i) - f_0 (\tilde X_i))^2 - \int (f - f_0)^2 dP \right \rvert \; \middle \vert X, Y \right )
\end{equation*}
is at most
\begin{equation*}
16 C \expect \left (\sup_{f \in F} \left \lvert \frac{1}{\tilde n} \sum_{i = 1}^{\tilde n} \eps_i (f(\tilde X_i) - f_0 (\tilde X_i)) \right \rvert \; \middle \vert X, Y \right )
\end{equation*}
almost surely. The result follows from Lemma \ref{lGaussProc} with $\tilde \sigma^2 = 1$.
\end{proof}
We move the bound on the distance between $V \hat h_{\hat r}$ and $V \hat h_{r_0}$ from the $L^2(\tilde P_{\tilde n})$ norm to the $L^2 (P)$ norm for $r_0 \in R$.

\begin{corollary} \label{cValTrueNorm}
Assume \ref{H} and \ref{tY}. Let $r_0 \in R$. We have
\begin{equation*}
\expect \left (\lVert V \hat h_{\hat r} -  V \hat h_{r_0} \rVert_{L^2 (P)}^2 \right )
\end{equation*}
is at most
\begin{equation*}
\frac{16 C (4 C + \tilde \sigma)}{\tilde n^{1/2}} \left ( \left ( 2 \log \left (2 + \frac{\lVert k \rVert_\infty^2 \rho^2}{C^2} \right ) \right )^{1/2} + \pi^{1/2} \right ) + 4 \expect \left ( \lVert V \hat h_{r_0} - g \rVert_{L^2 (P)}^2 \right ).
\end{equation*}
\end{corollary}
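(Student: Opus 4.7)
The plan is to mirror the structure used in Corollaries \ref{cTrueNorm} and \ref{cBddTrueNorm}: combine the empirical-norm bound from Lemma \ref{lValEmpNorm} with the uniform deviation bound from Lemma \ref{lValSwapNorm}. The key observation that makes this work is that $\hat r \in R$, so $V \hat h_{\hat r} \in F$, and likewise $V \hat h_{r_0} \in F$. Setting $f_0 = V \hat h_{r_0}$, we may therefore bound
\begin{equation*}
\lVert V \hat h_{\hat r} - V \hat h_{r_0} \rVert_{L^2(P)}^2 - \lVert V \hat h_{\hat r} - V \hat h_{r_0} \rVert_{L^2(\tilde P_{\tilde n})}^2 \leq \sup_{f \in F} \left \lvert \lVert f - f_0 \rVert_{L^2(\tilde P_{\tilde n})}^2 - \lVert f - f_0 \rVert_{L^2(P)}^2 \right \rvert,
\end{equation*}
so that taking expectations and applying Lemma \ref{lValSwapNorm} gives the ``$64 C^2$'' contribution, while Lemma \ref{lValEmpNorm} directly supplies the ``$16 C \tilde \sigma$'' contribution and the approximation term $4 \expect(\lVert V \hat h_{r_0} - g \rVert_{L^2(P)}^2)$.

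First I would write
\begin{equation*}
\expect \left ( \lVert V \hat h_{\hat r} - V \hat h_{r_0} \rVert_{L^2(P)}^2 \right ) = \expect \left ( \lVert V \hat h_{\hat r} - V \hat h_{r_0} \rVert_{L^2(\tilde P_{\tilde n})}^2 \right ) + \expect \left ( \lVert V \hat h_{\hat r} - V \hat h_{r_0} \rVert_{L^2(P)}^2 - \lVert V \hat h_{\hat r} - V \hat h_{r_0} \rVert_{L^2(\tilde P_{\tilde n})}^2 \right ),
\end{equation*}
then invoke Lemma \ref{lValEmpNorm} on the first summand and the display above together with Lemma \ref{lValSwapNorm} on the second. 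Adding these two bounds produces a prefactor $\tilde n^{-1/2} \bigl( 16 C \tilde \sigma + 64 C^2 \bigr) = 16 C (4 C + \tilde \sigma) \tilde n^{-1/2}$ multiplying the common logarithmic factor, which is precisely the form claimed in the statement.

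I do not anticipate any genuine obstacle, since both lemmas are already tailored so that their logarithmic factors match exactly; the proof is essentially a bookkeeping step and an invocation of the fact that $V \hat h_{\hat r}, V \hat h_{r_0} \in F$ to justify replacing the supremum over $F$ by the specific pair in question. The only mild point to note is that Lemma \ref{lValSwapNorm} is stated with a fixed centring element $f_0 \in F$, which is exactly what we need here by taking $f_0 = V \hat h_{r_0}$.
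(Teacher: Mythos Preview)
Your proposal is correct and follows the paper's proof essentially line for line: the paper also decomposes into the empirical-norm term handled by Lemma \ref{lValEmpNorm} and the deviation term handled by Lemma \ref{lValSwapNorm} with $f_0 = V\hat h_{r_0}$, then adds the two bounds to obtain the prefactor $16 C(4C+\tilde\sigma)$.
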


\begin{proof}
By Lemma \ref{lValEmpNorm}, we have
\begin{equation*}
\expect \left (\lVert V \hat h_{\hat r} -  V \hat h_{r_0} \rVert_{L^2 (\tilde P_{\tilde n})}^2 \right )
\end{equation*}
is at most
\begin{equation*}
\frac{16 C \tilde \sigma}{\tilde n^{1/2}} \left ( \left ( 2 \log \left (2 + \frac{\lVert k \rVert_\infty^2 \rho^2}{C^2} \right ) \right )^{1/2} + \pi^{1/2} \right ) + 4 \expect \left ( \lVert V \hat h_{r_0} - g \rVert_{L^2 (P)}^2 \right ).
\end{equation*}
Let $f_0 = V \hat h_{r_0}$. Since $\hat h_{\hat r} \in F$, by Lemma \ref{lValSwapNorm} we have
\begin{align*}
&\expect \left (\lVert V \hat h_{\hat r} -  V \hat h_{r_0} \rVert_{L^2 (P)}^2 - \lVert V \hat h_{\hat r} -  V \hat h_{r_0} \rVert_{L^2 (\tilde P_{\tilde n})}^2 \right ) \\
\leq \; &\expect \left (\sup_{f \in F} \left \lvert \lVert f -  f_0 \rVert_{L^2 (\tilde P_{\tilde n})}^2 - \lVert f -  f_0 \rVert_{L^2 (P)}^2 \right \rvert \right ) \\
\leq \; &\frac{64 C^2}{\tilde n^{1/2}} \left ( \left ( 2 \log \left (2 + \frac{\lVert k \rVert_\infty^2 \rho^2}{C^2} \right ) \right )^{1/2} + \pi^{1/2} \right ).
\end{align*}
The result follows.
\end{proof}
We bound the distance between $V \hat h_{\hat r}$ and $g$ in the $L^2(P)$ norm to prove Theorem \ref{tValBound}.

{\bf Proof of Theorem \ref{tValBound}}
We have
\begin{align*}
\lVert V \hat h_{\hat r} - g \rVert_{L^2 (P)}^2 &\leq \left ( \lVert V \hat h_{\hat r} - V h_{r_0} \rVert_{L^2 (P)}^2 + \lVert V h_{r_0} - g \rVert_{L^2 (P)}^2 \right )^2 \\
&\leq 2 \lVert V \hat h_{\hat r} - V h_{r_0} \rVert_{L^2 (P)}^2 + 2 \lVert V h_{r_0} - g \rVert_{L^2 (P)}^2.
\end{align*}
By Corollary \ref{cValTrueNorm}, we have
\begin{equation*}
\expect \left (\lVert V \hat h_{\hat r} -  V \hat h_{r_0} \rVert_{L^2 (P)}^2 \right )
\end{equation*}
is at most
\begin{equation*}
\frac{16 C (4 C + \tilde \sigma)}{\tilde n^{1/2}} \left ( \left ( 2 \log \left (2 + \frac{\lVert k \rVert_\infty^2 \rho^2}{C^2} \right ) \right )^{1/2} + \pi^{1/2} \right ) + 4 \expect \left ( \lVert V \hat h_{r_0} - g \rVert_{L^2 (P)}^2 \right ).
\end{equation*}
Hence,
\begin{equation*}
\expect \left (\lVert V \hat h_{\hat r} -  g \rVert_{L^2 (P)}^2 \right )
\end{equation*}
is at most
\begin{equation*}
\frac{32 C (4 C + \tilde \sigma)}{\tilde n^{1/2}} \left ( \left ( 2 \log \left (2 + \frac{\lVert k \rVert_\infty^2 \rho^2}{C^2} \right ) \right )^{1/2} + \pi^{1/2} \right ) + 10 \expect \left ( \lVert V \hat h_{r_0} - g \rVert_{L^2 (P)}^2 \right ).
\end{equation*} \hfill \BlackBox

We assume the conditions of Theorem \ref{tBddInterBound} to prove Theorem \ref{tValInterBound}.

{\bf Proof of Theorem \ref{tValInterBound}}
If we assume \ref{R1}, then $r_0 = a n^{(1-\beta)/(2 (1+\beta))} \in R$ and
\begin{equation*}
\expect \left (\lVert V \hat h_{r_0} -  g \rVert_{L^2 (P)}^2 \right ) \leq \frac{8 \lVert k \rVert_\infty (16 C + \sigma) a n^{(1-\beta)/(2 (1+\beta))}}{n^{1/2}} + \frac{10 B^{2/(1-\beta)}}{a^{ 2 \beta/(1-\beta)} n^{\beta/(1+\beta)}}
\end{equation*}
by Theorem \ref{tBddInterBound}. If we assume \ref{R2}, then there is at least one $r_0 \in R$ such that
\begin{equation*}
a n^{(1-\beta)/(2 (1+\beta))} \leq r_0 < a n^{(1-\beta)/(2 (1+\beta))} + b
\end{equation*}
and
\begin{align*}
\expect \left (\lVert V \hat h_{r_0} -  g \rVert_{L^2 (P)}^2 \right ) &\leq \frac{8 \lVert k \rVert_\infty (16 C + \sigma) r_0}{n^{1/2}} + \frac{10 B^{2/(1-\beta)}}{r_0^{ 2 \beta/(1-\beta)}} \\
&\leq \frac{8 \lVert k \rVert_\infty (16 C + \sigma) \left (a n^{(1-\beta)/(2 (1+\beta))} + b \right )}{n^{1/2}} + \frac{10 B^{2/(1-\beta)}}{a^{ 2 \beta/(1-\beta)} n^{\beta/(1+\beta)}}
\end{align*}
by Theorem \ref{tBddInterBound}. In either case,
\begin{equation*}
\expect \left (\lVert V \hat h_{r_0} -  g \rVert_{L^2 (P)}^2 \right ) \leq D_2 n^{-\beta/(1+\beta)}
\end{equation*}
for some constant $D_2 > 0$ not depending on $n$ or $\tilde n$. By Theorem \ref{tValBound}, we have
\begin{equation*}
\expect \left (\lVert V \hat h_{\hat r} -  g \rVert_{L^2 (P)}^2 \right ) \leq D_3 \log(n)^{1/2} \tilde n^{-1/2} + 10 D_2 n^{-\beta/(1+\beta)}
\end{equation*}
for some constant $D_3 > 0$ not depending on $n$ or $\tilde n$. Since $\tilde n$ increases at least linearly in $n$, there exists some constant $D_4 > 0$ such that $\tilde n \geq D_4 n$. We then have
\begin{align*}
\expect \left (\lVert V \hat h_{\hat r} -  g \rVert_{L^2 (P)}^2 \right ) &\leq D_4^{-1/2} D_3 \log(n)^{1/2} n^{-1/2} + 10 D_2 n^{-\beta/(1+\beta)} \\
&\leq D_1 n^{-\beta/(1+\beta)}
\end{align*}
for some constant $D_1 > 0$ not depending on $n$ or $\tilde n$. \hfill \BlackBox

\section{Proof of High-Probability Bound for Bounded Regression Function}

We bound the distance between $\hat h_r$ and $h_r$ in the $L^2(P_n)$ norm for $r > 0$ and $h_r \in r B_H$.

\begin{lemma} \label{lProbBddEmpNorm}
Assume \ref{Y2} and \ref{H}. Let $r > 0$, $h_r \in r B_H$ and $t \geq 1$. With probability at least $1 - 2 e^{-t}$, we have
\begin{equation*}
\lVert \hat h_r -  h_r \rVert_{L^2 (P_n)}^2 \leq \frac{20 \lVert k \rVert_\infty \sigma r t^{1/2}}{n^{1/2}} + 4 \lVert h_r - g \rVert_\infty^2.
\end{equation*}
\end{lemma}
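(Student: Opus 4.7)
The plan is to upgrade the proof of Lemma \ref{lEmpNorm} from an expectation bound to a high-probability bound, handling the two terms coming from Lemma \ref{lIneq} quite differently. Applying Lemma \ref{lIneq} with $A = r B_H$ yields
\begin{equation*}
\lVert \hat h_r - h_r \rVert_{L^2(P_n)}^2 \leq \frac{4}{n} \sum_{i=1}^n (Y_i - g(X_i))(\hat h_r(X_i) - h_r(X_i)) + 4 \lVert h_r - g \rVert_{L^2(P_n)}^2.
\end{equation*}
The second term is bounded deterministically by $4 \lVert h_r - g \rVert_\infty^2$, which is already exactly the second summand in the target inequality and needs no concentration argument. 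All the probabilistic content must therefore go into bounding the noise term with probability at least $1 - 2 e^{-t}$.

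For the noise term, I would repeat the RKHS trick used in the proof of Lemma \ref{lEmpProc}: since $\hat h_r - h_r \in 2 r B_H$, the reproducing kernel property and Cauchy--Schwarz give
\begin{equation*}
\left\lvert \frac{1}{n} \sum_{i=1}^n (Y_i - g(X_i))(\hat h_r(X_i) - h_r(X_i)) \right\rvert \leq 2 r \left\lVert \frac{1}{n} \sum_{i=1}^n (Y_i - g(X_i)) k_{X_i} \right\rVert_H,
\end{equation*}
whose squared $H$-norm equals the quadratic form $n^{-2} \sum_{i,j=1}^n (Y_i - g(X_i))(Y_j - g(X_j)) k(X_i, X_j)$. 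Conditional on $X$, this is a quadratic form in the $\sigma^2$-subgaussian random variables $Y_i - g(X_i)$ with matrix $n^{-2} K$.

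I would then invoke the subgaussian quadratic form concentration (Lemma \ref{lSubgaussProb}) conditionally on $X$. Its conditional mean is at most $\sigma^2 \tr(K)/n^2 \leq \sigma^2 \lVert k \rVert_\infty^2 / n$, and one can control the Frobenius and operator pieces of $n^{-2} K$ by $\lVert k \rVert_\infty^2 / n$ using $K_{i,i} \leq \lVert k \rVert_\infty^2$ and $\lVert K \rVert_{\mathrm{op}} \leq n \lVert k \rVert_\infty^2$. This yields, with conditional probability at least $1 - 2 e^{-t}$, a bound of the form (constant) $\cdot \sigma^2 \lVert k \rVert_\infty^2 (1 + t)/n$, which for $t \geq 1$ collapses to $O(\sigma^2 \lVert k \rVert_\infty^2 t/n)$. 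Since this conditional bound holds $X$-almost surely, it integrates to the same unconditional probability statement. Taking square roots, multiplying by $2 r$, and then by the factor $4$ from Lemma \ref{lIneq} produces the required $20 \lVert k \rVert_\infty \sigma r t^{1/2} / n^{1/2}$ once constants are chosen in Lemma \ref{lSubgaussProb}.

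The principal difficulty is constant tracking: the target coefficient $20$ is sharp, so one cannot appeal to a generic Hanson--Wright-type bound with unspecified constants but must use exactly the numerical version supplied by Lemma \ref{lSubgaussProb}. Apart from this bookkeeping, the argument is essentially a high-probability replay of the $L^2(P_n)$ step of Lemma \ref{lEmpNorm}, with the $L^\infty$ bound in place of $L^2(P)$ reflecting that we are no longer averaging over the design.
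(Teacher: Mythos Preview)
Your proposal is correct and in fact slightly cleaner than the paper's argument, but the route differs in one instructive place. The paper does \emph{not} bound the whole noise term via $\hat h_r - h_r \in 2r B_H$; instead it splits
\[
\frac{1}{n}\sum_{i=1}^n (Y_i-g(X_i))(\hat h_r(X_i)-h_r(X_i))
= \frac{1}{n}\sum_{i=1}^n (Y_i-g(X_i))\hat h_r(X_i)
\;-\; \frac{1}{n}\sum_{i=1}^n (Y_i-g(X_i))h_r(X_i).
\]
The second piece is a fixed linear combination of conditionally $\sigma^2$-subgaussian variables and is handled by a direct Chernoff bound (one $e^{-t}$), while the first piece is bounded via the supremum over $r B_H$ and Lemma \ref{lSubgaussProb} applied to $n^{-2}K$ (a second $e^{-t}$). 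Adding the two bounds $2$ and $3$ and multiplying by $4$ yields the $20$ and explains the probability $1-2e^{-t}$ in the statement.

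Your single-shot approach via $2rB_H$ invokes Lemma \ref{lSubgaussProb} once and therefore actually delivers probability $1-e^{-t}$, not $1-2e^{-t}$; the latter is what the paper's split produces. On constants, your approach gives $4\cdot 2r\cdot\bigl(1+2t+2(t^2+t)^{1/2}\bigr)^{1/2}\lVert k\rVert_\infty\sigma/n^{1/2}$, and since $1+2t+2(t^2+t)^{1/2}\leq 4t+2\leq 6t$ for $t\geq 1$, you obtain coefficient $8\sqrt{6}\approx 19.6<20$, so the stated inequality indeed follows. What you lose relative to the paper is only that the split would give a marginally smaller constant if both steps were done sharply; what you gain is one fewer bad event and a shorter proof.
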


\begin{proof}
By Lemma \ref{lIneq} with $A = r B_H$, we have
\begin{equation*}
\lVert \hat h_r -  h_r \rVert_{L^2 (P_n)}^2 \leq \frac{4}{n} \sum_{i = 1}^n (Y_i - g(X_i)) (\hat h_r (X_i) - h_r (X_i)) + 4 \lVert h_r - g \rVert_{L^2 (P_n)}^2.
\end{equation*}
We now bound the right-hand side. We have
\begin{equation*}
\lVert h_r - g \rVert_{L^2 (P_n)}^2 \leq \lVert h_r - g \rVert_\infty^2.
\end{equation*}
Furthermore,
\begin{equation*}
- \frac{1}{n} \sum_{i = 1}^n (Y_i - g(X_i)) h_r (X_i)
\end{equation*}
is subgaussian given $X$ with parameter
\begin{equation*}
\frac{1}{n^2} \sum_{i = 1}^n \sigma^2 h_r (X_i)^2 \leq \frac{\lVert k \rVert_\infty^2 \sigma^2 r^2}{n}.
\end{equation*}
So we have
\begin{equation*}
- \frac{1}{n} \sum_{i = 1}^n (Y_i - g(X_i)) h_r (X_i) \leq \frac{\lVert k \rVert_\infty \sigma r (2 t)^{1/2}}{n^{1/2}} \leq \frac{2 \lVert k \rVert_\infty \sigma r t^{1/2}}{n^{1/2}}
\end{equation*}
with probability at least $1 - e^{-t}$ by Chernoff bounding. Finally, we have
\begin{align*}
\frac{1}{n} \sum_{i = 1}^n (Y_i - g(X_i)) \hat h_r (X_i) &\leq \sup_{f \in r B_H} \left \lvert \frac{1}{n} \sum_{i = 1}^n (Y_i - g(X_i)) f(X_i) \right \rvert \\
&= \sup_{f \in r B_H} \left \lvert \left \langle \frac{1}{n} \sum_{i = 1}^n (Y_i - g(X_i)) k_{X_i}, f \right \rangle_H \right \rvert \\
&= r \left \lVert \frac{1}{n} \sum_{i = 1}^n (Y_i - g(X_i)) k_{X_i} \right \rVert_H \\
&= r \left ( \frac{1}{n^2} \sum_{i, j = 1}^n (Y_i - g(X_i)) (Y_j - g(X_j)) k(X_i, X_j) \right )^{1/2}
\end{align*}
by the reproducing kernel property and the Cauchy--Schwarz inequality. Let $K$ be the $n \times n$ matrix with $K_{i, j} = k(X_i, X_j)$ and let $\eps$ be the vector of the $Y_i - g(X_i)$. Then
\begin{equation*}
\frac{1}{n^2} \sum_{i, j = 1}^n (Y_i - g(X_i)) (Y_j - g(X_j)) k(X_i, X_j) = \eps^{\trans} (n^{-2} K) \eps.
\end{equation*}
Furthermore, since $k$ is a measurable function on $(S \times S, \mathcal{S} \otimes \mathcal{S})$, we have that $n^{-2} K$ is an $(\real^{n \times n}, \bor(\real^{n \times n}))$-valued measurable matrix on $(\Omega, \mathcal{F})$ and non-negative-definite. Let $a_i$ for $1 \leq i \leq n$ be the eigenvalues of $n^{-2} K$. Then
\begin{equation*}
\max_i a_i \leq \tr(n^{-2} K) \leq n^{-1} \lVert k \rVert_\infty^2
\end{equation*}
and
\begin{equation*}
\tr((n^{-2} K)^2) = \lVert a \rVert_2^2 \leq \lVert a \rVert_1^2 \leq n^{-2} \lVert k \rVert_\infty^4.
\end{equation*}
Therefore, by Lemma \ref{lSubgaussProb} with $M = n^{-2} K$, we have
\begin{equation*}
\eps^{\trans} (n^{-2} K) \eps \leq \lVert k \rVert_\infty^2 \sigma^2 n^{-1} (1 + 2 t + 2 (t^2 + t)^{1/2})
\end{equation*}
and
\begin{equation*}
\frac{1}{n} \sum_{i = 1}^n (Y_i - g(X_i)) \hat h_r (X_i) \leq \frac{3 \lVert k \rVert_\infty \sigma r t^{1/2}}{n^{1/2}}
\end{equation*}
with probability at least $1 - e^{-t}$. The result follows.
\end{proof}
The following lemma is useful for bounding the supremum in \eqref{eSup2}.

\begin{lemma} \label{lTal}
Let $D > 0$ and $A \subs L^\infty$ be separable with $\lVert f \rVert_\infty \leq D$ for all $f \in A$. Let
\begin{equation*}
Z = \sup_{f \in A} \left \lvert \lVert f \rVert_{L^2 (P_n)}^2 - \lVert f \rVert_{L^2 (P)}^2 \right \rvert.
\end{equation*}
Then, for $t > 0$, we have
\begin{equation*}
Z \leq \expect(Z) + \left ( \frac{2 D^4 t}{n} + \frac{4 D^2 \expect(Z) t}{n} \right )^{1/2} + \frac{2 D^2 t}{3 n}
\end{equation*}
with probability at least $1 - e^{-t}$.
\end{lemma}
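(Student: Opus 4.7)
The plan is to apply Talagrand's inequality, as formulated in Theorem A.9.1 of Steinwart and Christmann (2008), to the class of centred squared functions $\mathcal{G} = \{g_f : f \in A\}$ where $g_f(x) = f(x)^2 - \int f^2 \, dP$. Under this reformulation, we have
\begin{equation*}
Z = \sup_{f \in A} \left\lvert \frac{1}{n}\sum_{i=1}^n g_f(X_i) \right\rvert,
\end{equation*}
which is the precise quantity to which Talagrand-type bounds apply.

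First I would verify the two structural hypotheses required by the theorem. For the uniform bound, since $\lVert f \rVert_\infty \leq D$ for every $f \in A$, both $f(x)^2$ and $\int f^2 \, dP$ lie in $[0, D^2]$, so $\lVert g_f \rVert_\infty \leq D^2$. For the variance bound, one estimates
\begin{equation*}
\vary(g_f(X_1)) \leq \expect(f(X_1)^4) \leq D^2 \expect(f(X_1)^2) \leq D^4,
\end{equation*}
which gives the factor $D^4$ appearing under the square root. The assumed separability of $A$ in $L^\infty$ ensures that the supremum defining $Z$ can be reduced to a countable one (passing to a dense subset), so $Z$ is measurable and Talagrand's inequality is applicable in the stated form.

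Next I would handle the absolute value by applying Talagrand's inequality to the symmetrised class $\mathcal{G} \cup (-\mathcal{G})$, which has the same $L^\infty$ and variance bounds as $\mathcal{G}$ and for which the one-sided supremum coincides with $nZ$. Plugging the constants $b = D^2$ and $\sigma^2 = D^4$ into the inequality
\begin{equation*}
n Z \leq \expect(n Z) + \bigl(2 t(n D^4 + 2 D^2 \expect(n Z))\bigr)^{1/2} + \tfrac{2 D^2 t}{3}
\end{equation*}
and dividing through by $n$ yields the stated bound.

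The only real obstacle is bookkeeping: matching the exact constants of the particular form of Talagrand's inequality used in Steinwart and Christmann with the factors that appear in the target bound, and making sure the symmetrisation trick preserves the variance and $L^\infty$ bounds without inflating the constants. Everything else is a direct substitution once the bound $\lVert g_f \rVert_\infty \leq D^2$ and the variance estimate $\expect(g_f^2) \leq D^4$ are in hand.
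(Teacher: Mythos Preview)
Your proposal is correct and follows essentially the same route as the paper: rewrite $Z$ as the supremum of $\lvert n^{-1}\sum_i g_f(X_i)\rvert$ with $g_f = f^2 - \int f^2\,dP$, record the bounds $\lVert g_f\rVert_\infty \le D^2$ and $\expect g_f^2 \le D^4$, invoke separability for measurability, and plug into Theorem A.9.1 of \citet{steinwart2008support}. The only cosmetic difference is that the paper scales by $n^{-1}$ before applying Talagrand, whereas you apply it to $nZ$ and divide afterwards; your explicit mention of passing to $\mathcal{G}\cup(-\mathcal{G})$ to handle the absolute value is a point the paper leaves implicit.
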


\begin{proof}
We have
\begin{equation*}
Z = \sup_{f \in A} \left \lvert\sum_{i = 1}^n n^{-1} \left ( f(X_i)^2 - \lVert f \rVert_{L^2 (P)}^2 \right ) \right \rvert
\end{equation*}
and
\begin{align*}
\expect \left ( n^{-1} \left ( f(X_i)^2 - \lVert f \rVert_{L^2 (P)}^2 \right ) \right ) &= 0, \\
n^{-1} \left \lvert f(X_i)^2 - \lVert f \rVert_{L^2 (P)}^2 \right \rvert &\leq \frac{D^2}{n}, \\
\expect \left ( n^{-2} \left ( f(X_i)^2 - \lVert f \rVert_{L^2 (P)}^2 \right )^2 \right ) &\leq \frac{D^4}{n^2}
\end{align*}
for all $1 \leq i \leq n$ and $f \in A$. Furthermore, $A$ is separable, so $Z$ is a random variable on $(\Omega, \mathcal{F})$ and we can use Talagrand's inequality \citep[Theorem A.9.1 of][]{steinwart2008support} to show
\begin{equation*}
Z > \expect(Z) + \left (2 t \left ( \frac{D^4}{n} + \frac{2 D^2 \expect(Z)}{n} \right ) \right )^{1/2} + \frac{2 t D^2}{3 n}
\end{equation*}
with probability at most $e^{-t}$. The result follows.
\end{proof}
The following lemma is useful for moving the bound on the distance between $V \hat h_r$ and $V h_r$ from the $L^2(P_n)$ norm to the $L^2 (P)$ norm for $r > 0$ and $h_r \in r B_H$.

\begin{lemma} \label{lProbBddSwapNorm}
Assume \ref{H}. Let $r > 0$ and $t \geq 1$. With probability at least $1 - e^{-t}$, we have
\begin{equation*}
\sup_{f_1, f_2 \in r B_H} \left \lvert \lVert V f_1 - V f_2 \rVert_{L^2 (P_n)}^2 - \lVert V f_1 - V f_2 \rVert_{L^2 (P)}^2 \right \rvert
\end{equation*}
is at most
\begin{equation*}
\frac{8 \left (C^2 + 4 \lVert k \rVert_\infty^{1/2} C^{3/2} r^{1/2} + 8 \lVert k \rVert_\infty C r \right ) t^{1/2}}{n^{1/2}} + \frac{8 C^2 t}{3 n}.
\end{equation*}
\end{lemma}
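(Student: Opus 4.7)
The plan is to combine the expectation bound from Lemma \ref{lBddSwapNorm} with the Talagrand-type deviation bound in Lemma \ref{lTal}, applied to the class
\begin{equation*}
A = \{ V f_1 - V f_2 : f_1, f_2 \in r B_H \} \subs L^\infty,
\end{equation*}
for which $\lVert V f_1 - V f_2 \rVert_\infty \leq 2 C$, so the constant $D$ in Lemma \ref{lTal} can be taken to be $2 C$. Before invoking Lemma \ref{lTal}, I would verify that $A$ is separable in $L^\infty$: separability of $H$ yields separability of $r B_H$ in the RKHS norm, and the bound $\lVert h \rVert_\infty \leq \lVert k \rVert_\infty \lVert h \rVert_H$ transfers this to separability in $L^\infty$, which then passes to $A$. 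The supremum $Z$ in the statement is precisely the quantity controlled by Lemma \ref{lTal} for this choice of $A$.

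Next I would invoke Lemma \ref{lBddSwapNorm} to obtain the expectation bound
\begin{equation*}
\expect(Z) \leq \frac{64 \lVert k \rVert_\infty C r}{n^{1/2}}.
\end{equation*}
Plugging $D = 2C$ into Lemma \ref{lTal} gives, with probability at least $1 - e^{-t}$,
\begin{equation*}
Z \leq \expect(Z) + \left ( \frac{32 C^4 t}{n} + \frac{16 C^2 \expect(Z) t}{n} \right )^{1/2} + \frac{8 C^2 t}{3 n}.
\end{equation*}
I would then split the square root via $(a + b)^{1/2} \leq a^{1/2} + b^{1/2}$, substitute the expectation bound in the two places it appears (using $t \geq 1$ so $\expect(Z) \leq \expect(Z) t^{1/2}$), and use $n \geq 1$ to absorb the $n^{-3/4}$ produced by $\expect(Z)^{1/2} n^{-1/2}$ into $n^{-1/2}$. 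The three resulting terms of order $n^{-1/2}$ correspond to the three summands $C^2$, $4 \lVert k \rVert_\infty^{1/2} C^{3/2} r^{1/2}$ and $8 \lVert k \rVert_\infty C r$ inside the parenthesis of the target bound, up to the constants $8$, $32$ and $64$; a short numerical check confirms $\sqrt{32} \leq 8$, which is the only nontrivial inequality used to match the claimed constants.

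There is no serious obstacle here: everything is set up by the preceding lemmas, and the only care required is bookkeeping with the constants so that $\sqrt{32}\,C^2 t^{1/2} n^{-1/2}$, $32\, \lVert k \rVert_\infty^{1/2} C^{3/2} r^{1/2} t^{1/2} n^{-3/4}$, and $64\, \lVert k \rVert_\infty C r t^{1/2} n^{-1/2}$ collectively fit inside $8 (C^2 + 4 \lVert k \rVert_\infty^{1/2} C^{3/2} r^{1/2} + 8 \lVert k \rVert_\infty C r) t^{1/2} n^{-1/2}$ after using $n^{3/4} \geq n^{1/2}$. The leftover $\frac{8 C^2 t}{3 n}$ term from Talagrand's inequality is kept separate and appears unchanged in the conclusion.
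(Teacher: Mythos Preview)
Your proposal is correct and follows essentially the same route as the paper's proof: define $A=\{Vf_1-Vf_2:f_1,f_2\in rB_H\}$, apply Lemma \ref{lTal} with $D=2C$, and feed in the expectation bound $\expect(Z)\le 64\lVert k\rVert_\infty C r/n^{1/2}$ from Lemma \ref{lBddSwapNorm}. The constant bookkeeping you outline (splitting the square root, using $t\ge1$ and $n\ge1$) is exactly what is needed to reach the stated bound, and your check that $\sqrt{32}\le 8$ is the only nontrivial numerical inequality involved.
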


\begin{proof}
Let $A = \{ V f_1 - V f_2 : f_1, f_2 \in r B_H \}$ and
\begin{equation*}
Z = \sup_{f_1, f_2 \in r B_H} \left \lvert \lVert V f_1 - V f_2 \rVert_{L^2 (P_n)}^2 - \lVert V f_1 - V f_2 \rVert_{L^2 (P)}^2 \right \rvert.
\end{equation*}
Then $A \subs L^\infty$ is separable because $H$ is separable and has a bounded kernel $k$. Furthermore, $\lVert V f_1 - V f_2 \rVert_\infty \leq 2 C$ for all $f_1, f_2 \in r B_H$. By Lemma \ref{lTal}, we have
\begin{equation*}
Z \leq \expect(Z) + \left ( \frac{32 C^4 t}{n} + \frac{16 C^2 \expect(Z) t}{n} \right )^{1/2} + \frac{8 C^2 t}{3 n}
\end{equation*}
with probability at least $1 - e^{-t}$. By Lemma \ref{lBddSwapNorm}, we have
\begin{equation*}
\expect(Z) \leq \frac{64 \lVert k \rVert_\infty C r}{n^{1/2}}.
\end{equation*}
The result follows.
\end{proof}
We move the bound on the distance between $V \hat h_r$ and $V h_r$ from the $L^2(P_n)$ norm to the $L^2 (P)$ norm for $r > 0$ and $h_r \in r B_H$.

\begin{corollary} \label{cProbBddTrueNorm}
Assume \ref{Y2} and \ref{H}. Let $r > 0$, $h_r \in r B_H$ and $t \geq 1$. With probability at least $1 - 3 e^{-t}$, we have
\begin{equation*}
\lVert V \hat h_r - V h_r \rVert_{L^2 (P)}^2
\end{equation*}
is at most
\begin{equation*}
\frac{4 \left (2 C^2 + 8 \lVert k \rVert_\infty^{1/2} C^{3/2} r^{1/2} + \lVert k \rVert_\infty (16 C + 5 \sigma) r \right ) t^{1/2}}{n^{1/2}} + \frac{8 C^2 t}{3 n} + 4 \lVert h_r - g \rVert_\infty^2.
\end{equation*}
\end{corollary}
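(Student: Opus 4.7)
The plan is to combine the two preparatory results in the natural way that mirrors the expectation proof in Corollary \ref{cBddTrueNorm}, replacing the expectation-level statements by their high-probability analogues. Since $V$ is the projection onto $[-C, C]$, it is a contraction on $\real$, so pointwise we have $|V \hat h_r(X_i) - V h_r(X_i)| \leq |\hat h_r(X_i) - h_r(X_i)|$ and therefore
\begin{equation*}
\lVert V \hat h_r - V h_r \rVert_{L^2(P_n)}^2 \leq \lVert \hat h_r - h_r \rVert_{L^2(P_n)}^2.
\end{equation*}
This inequality is the bridge between the two lemmas: it lets us transfer the control we have on the unclipped estimator at the empirical level into control on the clipped estimator at the same level.

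First I would apply Lemma \ref{lProbBddEmpNorm} to the right-hand side above, producing the bound $\tfrac{20 \lVert k \rVert_\infty \sigma r t^{1/2}}{n^{1/2}} + 4 \lVert h_r - g \rVert_\infty^2$ on the event $E_1$ of probability at least $1 - 2 e^{-t}$. Next I would use the trivial splitting
\begin{equation*}
\lVert V \hat h_r - V h_r \rVert_{L^2(P)}^2 \leq \lVert V \hat h_r - V h_r \rVert_{L^2(P_n)}^2 + \sup_{f_1, f_2 \in r B_H} \left\lvert \lVert V f_1 - V f_2 \rVert_{L^2(P_n)}^2 - \lVert V f_1 - V f_2 \rVert_{L^2(P)}^2 \right\rvert,
\end{equation*}
which is valid because $\hat h_r, h_r \in r B_H$. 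Applying Lemma \ref{lProbBddSwapNorm} to the supremum on the right-hand side gives a bound of $\tfrac{8(C^2 + 4 \lVert k \rVert_\infty^{1/2} C^{3/2} r^{1/2} + 8 \lVert k \rVert_\infty C r) t^{1/2}}{n^{1/2}} + \tfrac{8 C^2 t}{3n}$ on an event $E_2$ of probability at least $1 - e^{-t}$.

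Finally I would take a union bound, so that on $E_1 \cap E_2$, which has probability at least $1 - 3 e^{-t}$, both estimates hold simultaneously. Summing the coefficients of $t^{1/2}/n^{1/2}$ from the two bounds yields
\begin{equation*}
8 C^2 + 32 \lVert k \rVert_\infty^{1/2} C^{3/2} r^{1/2} + 64 \lVert k \rVert_\infty C r + 20 \lVert k \rVert_\infty \sigma r = 4\bigl(2 C^2 + 8 \lVert k \rVert_\infty^{1/2} C^{3/2} r^{1/2} + \lVert k \rVert_\infty (16 C + 5 \sigma) r\bigr),
\end{equation*}
which produces precisely the stated constants, with the $\tfrac{8 C^2 t}{3n}$ and $4 \lVert h_r - g \rVert_\infty^2$ terms carried through unchanged.

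There is no real obstacle here; the proof is essentially bookkeeping. The only point that requires a little care is making sure the two high-probability events are combined correctly by a union bound and that the $L^\infty$-contraction property of $V$ is applied only after switching to the empirical norm, since the sharper approximation term $\lVert h_r - g \rVert_\infty^2$ rather than $\lVert h_r - g \rVert_{L^2(P)}^2$ must survive into the final bound.
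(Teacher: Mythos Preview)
Your proposal is correct and follows essentially the same argument as the paper: apply Lemma \ref{lProbBddEmpNorm} together with the contraction property of $V$ to control $\lVert V\hat h_r - V h_r\rVert_{L^2(P_n)}^2$, then use Lemma \ref{lProbBddSwapNorm} to pass to $L^2(P)$, and combine the two events by a union bound. Your explicit verification of the constants is a nice touch that the paper leaves implicit.
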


\begin{proof}
By Lemma \ref{lProbBddEmpNorm}, we have
\begin{equation*}
\lVert \hat h_r -  h_r \rVert_{L^2 (P_n)}^2 \leq \frac{20 \lVert k \rVert_\infty \sigma r t^{1/2}}{n^{1/2}} + 4 \lVert h_r - g \rVert_\infty^2.
\end{equation*}
with probability at least $1 - 2 e^{-t}$, so
\begin{equation*}
\lVert V \hat h_r -  V h_r \rVert_{L^2 (P_n)}^2 \leq \frac{20 \lVert k \rVert_\infty \sigma r t^{1/2}}{n^{1/2}} + 4 \lVert h_r - g \rVert_\infty^2.
\end{equation*}
Since $\hat h_r, h_r \in r B_H$, by Lemma \ref{lProbBddSwapNorm} we have
\begin{align*}
&\lVert V \hat h_r -  V h_r \rVert_{L^2 (P)}^2 - \lVert V \hat h_r - V h_r \rVert_{L^2 (P_n)}^2 \\
\leq \; &\sup_{f_1, f_2 \in r B_H} \left \lvert \lVert V f_1 - V f_2 \rVert_{L^2 (P_n)}^2 - \lVert V f_2 - V f_2 \rVert_{L^2 (P)}^2 \right \rvert \\
\leq \; &\frac{8 \left (C^2 + 4 \lVert k \rVert_\infty^{1/2} C^{3/2} r^{1/2} + 8 \lVert k \rVert_\infty C r \right ) t^{1/2}}{n^{1/2}} + \frac{8 C^2 t}{3 n}
\end{align*}
with probability at least $1 - e^{-t}$. The result follows.
\end{proof}
We assume \ref{g2} to bound the distance between $V \hat h_r$ and $g$ in the $L^2(P)$ norm for $r > 0$ and prove Theorem \ref{tProbBddBound}.

{\bf Proof of Theorem \ref{tProbBddBound}}
Fix $h_r \in r B_H$. We have
\begin{align*}
\lVert V \hat h_r - g \rVert_{L^2 (P)}^2 &\leq \left ( \lVert V \hat h_r - V h_r \rVert_{L^2 (P)}^2 + \lVert V h_r - g \rVert_{L^2 (P)}^2 \right )^2 \\
&\leq 2 \lVert V \hat h_r - V h_r \rVert_{L^2 (P)}^2 + 2 \lVert V h_r - g \rVert_{L^2 (P)}^2 \\
&\leq 2 \lVert V \hat h_r - V h_r \rVert_{L^2 (P)}^2 + 2 \lVert h_r - g \rVert_{L^2 (P)}^2.
\end{align*}
By Corollary \ref{cProbBddTrueNorm}, we have
\begin{equation*}
\lVert V \hat h_r - V h_r \rVert_{L^2 (P)}^2
\end{equation*}
is at most
\begin{equation*}
\frac{4 \left (2 C^2 + 8 \lVert k \rVert_\infty^{1/2} C^{3/2} r^{1/2} + \lVert k \rVert_\infty (16 C + 5 \sigma) r \right ) t^{1/2}}{n^{1/2}} + \frac{8 C^2 t}{3 n} + 4 \lVert h_r - g \rVert_\infty^2.
\end{equation*}
with probability at least $1 - 3 e^{-t}$. Hence,
\begin{equation*}
\lVert V \hat h_r -  g \rVert_{L^2 (P)}^2
\end{equation*}
is at most
\begin{equation*}
\frac{8 \left (2 C^2 + 8 \lVert k \rVert_\infty^{1/2} C^{3/2} r^{1/2} + \lVert k \rVert_\infty (16 C + 5 \sigma) r \right ) t^{1/2}}{n^{1/2}} + \frac{16 C^2 t}{3 n} + 10 \lVert h_r - g \rVert_\infty^2.
\end{equation*}
Take a sequence of $h_{r, n} \in r B_H$ for $n \geq 1$ with
\begin{equation*}
\lVert h_{r, n} - g \rVert_\infty^2 \downarrow I_\infty (g, r)
\end{equation*}
as $n \to \infty$ and let $A_n \in \mathcal{F}$ be the set with $\prob(A_n) \geq 1 - 3 e^{-t}$ on which the above inequality holds with $h_r = h_{r, n}$. Since the $A_n$ are non-increasing sets, we have that
\begin{equation*}
\prob \left ( \bigcap_{n = 1}^\infty A_n \right ) = \lim_{n \to \infty} \prob(A_n) \geq 1 - 3 e^{-t}.
\end{equation*}
The result follows with
\begin{equation*}
D_1 = 16 C^2, \; D_2 = 64 \lVert k \rVert_\infty^{1/2} C^{3/2}, \; D_3 = 8 \lVert k \rVert_\infty (16 C + 5 \sigma) \text{ and } D_4 = 16 C^2 / 3.
\end{equation*} \hfill \BlackBox

We assume \ref{g3} to prove Theorem \ref{tProbBddInterBound}.

{\bf Proof of Theorem \ref{tProbBddInterBound}}
The initial bound follows from Theorem \ref{tProbBddBound} and \eqref{eLinftyApproxInter} with $D_5 = 10 B^{2/(1-\beta)}$. Based on this bound, setting
\begin{equation*}
r = D_6 t^{- (1-\beta)/(2 (1+\beta))} n^{(1-\beta)/(2 (1+\beta))}
\end{equation*}
gives
\begin{equation*}
\lVert V \hat h_r -  g \rVert_{L^2 (P)}^2
\end{equation*}
is at most
\begin{align*}
&\left (D_3 D_6 + D_5 D_6^{- 2\beta/(1-\beta)} \right ) t^{\beta/(1+\beta)} n^{- \beta/(1+\beta)} + D_2 D_6^{1/2} t^{(1+3\beta)/(4(1+\beta))} n^{- (1+3\beta)/(4(1+\beta))} \\
+ \; &D_1 t^{1/2} n^{-1/2} + D_4 t n^{-1}.
\end{align*}
Hence, the next bound follows with
\begin{equation*}
D_7 = D_3 D_6 + D_5 D_6^{- 2\beta/(1-\beta)} \text{ and } D_8 = D_2 D_6^{1/2}.
\end{equation*} \hfill \BlackBox

\section{Proof of High-Probability Bound for Validation}

We need to introduce some new notation for the next result. Let $U$ and $V$ be random variables on $(\Omega, \mathcal{F})$. Then
\begin{align*}
\lVert U \rVert_{\psi_2} &= \inf \{ a \in (0, \infty) : \expect \psi_2 (\lvert U \rvert/a) \leq 1 \}, \\
\lVert U \vert V \rVert_{\psi_2} &= \inf \{ a \in (0, \infty) : \expect(\psi_2 (\lvert U \rvert/a) \vert V) \leq 1 \text{ almost surely} \},
\end{align*}
where $\psi_2 (x) = \exp(x^2) - 1$ for $x \in \real$. Note that these infima are attained by the monotone convergence theorem. Exercise 5 of Section 2.3 of \citet{gine2015mathematical} shows that $\lVert U \rVert_{\psi_2}$ is a norm on the space of $U$ such that $\lVert U \rVert_{\psi_2} < \infty$ and $\lVert U \vert V \rVert_{\psi_2}$ is a norm on the space of $U$ such that $\lVert U \vert V \rVert_{\psi_2} < \infty$.

We bound the distance between $V \hat h_{\hat r}$ and $V \hat h_{r_0}$ in the $L^2(\tilde P_{\tilde n})$ norm for $r_0 \in R$.

\begin{lemma} \label{lProbValEmpNorm}
Assume \ref{H} and \ref{tY}. Let $r_0 \in R$ and $t \geq 1$. With probability at least $1 - 2 e^{-t}$, we have
\begin{equation*}
\lVert V \hat h_{\hat r} -  V \hat h_{r_0} \rVert_{L^2 (\tilde P_{\tilde n})}^2
\end{equation*}
is at most
\begin{equation*}
\frac{292 C \tilde \sigma t^{1/2}}{\tilde n^{1/2}} \left ( \left ( 2 \log \left (1 + \frac{\lVert k \rVert_\infty^2 \rho^2}{8 C^2} \right ) \right )^{1/2} + \pi^{1/2} \right ) + \frac{24 C^2 t^{1/2}}{\tilde n^{1/2}}  + 4 \lVert V \hat h_{r_0} - g \rVert_{L^2 (P)}^2.
\end{equation*}
\end{lemma}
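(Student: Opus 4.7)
The plan is to mirror the proof of Lemma \ref{lValEmpNorm} step-by-step, replacing each expectation control with a corresponding tail bound and combining the two pieces by a union bound. The starting point is Lemma \ref{lIneq} applied to $A = F$ with $n$, $X$, $Y$, $P_n$ replaced by $\tilde n$, $\tilde X$, $\tilde Y$, $\tilde P_{\tilde n}$; since $\hat h_{\hat r}, \hat h_{r_0} \in F$, this gives the deterministic inequality
\begin{equation*}
\lVert V \hat h_{\hat r} -  V \hat h_{r_0} \rVert_{L^2 (\tilde P_{\tilde n})}^2 \leq \frac{4}{\tilde n} \sum_{i = 1}^{\tilde n} (\tilde Y_i - g(\tilde X_i))(V \hat h_{\hat r}(\tilde X_i) -  V \hat h_{r_0}(\tilde X_i)) + 4 \lVert V \hat h_{r_0} - g \rVert_{L^2 (\tilde P_{\tilde n})}^2.
\end{equation*}
It then suffices to control the two right-hand terms with probability $\geq 1 - e^{-t}$ each and union-bound.

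For the first term, bound the summand by $\sup_{f \in F}|W(f)|$ with $W(f) = \tilde n^{-1}\sum_{i}(\tilde Y_i - g(\tilde X_i))(f(\tilde X_i) - V\hat h_{r_0}(\tilde X_i))$, as in Lemma \ref{lGaussProc}. Conditional on $\tilde X, X, Y$, the process $W$ is centred and $\tilde\sigma^2 \lVert\cdot\rVert_\infty^2/\tilde n$-subgaussian, separable on $(F, \tilde\sigma \lVert\cdot\rVert_\infty/\tilde n^{1/2})$, with diameter $D \leq 2C\tilde\sigma/\tilde n^{1/2}$. Apply the high-probability chaining bound (Exercise 1 of Section 2.3 of \citet{gine2015mathematical}) conditional on $(\tilde X, X, Y)$ to obtain, with conditional probability at least $1 - e^{-t}$,
\begin{equation*}
\sup_{f \in F}|W(f)| \leq C_0 \int_0^{D} (\log N(F, \tilde\sigma \lVert\cdot\rVert_\infty/\tilde n^{1/2}, \eps))^{1/2}\,d\eps + C_1 D\, t^{1/2},
\end{equation*}
for absolute constants $C_0, C_1$. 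Changing variables and applying Lemma \ref{lCoverInt} as in the proof of Lemma \ref{lGaussProc} yields a bound of the form $C \tilde\sigma \tilde n^{-1/2}\bigl((2\log(1 + \lVert k\rVert_\infty^2\rho^2/(8C^2)))^{1/2} + \pi^{1/2}\bigr)\cdot t^{1/2}$; integrating the conditional probability gives the same bound unconditionally. This produces the first summand in the stated bound, once multiplied by the factor $4$ coming from Lemma \ref{lIneq}.

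For the second term, condition on $(X, Y)$, which determines $V\hat h_{r_0}$. The summands $(V\hat h_{r_0}(\tilde X_i) - g(\tilde X_i))^2$ are then i.i.d., take values in $[0, 4C^2]$, and have common mean $\lVert V\hat h_{r_0} - g\rVert_{L^2(P)}^2$. Hoeffding's inequality therefore yields
\begin{equation*}
\lVert V \hat h_{r_0} - g \rVert_{L^2(\tilde P_{\tilde n})}^2 \leq \lVert V \hat h_{r_0} - g \rVert_{L^2(P)}^2 + 4C^2\sqrt{t/(2\tilde n)}
\end{equation*}
with conditional probability at least $1-e^{-t}$, and hence unconditionally; multiplying by $4$ produces the $24 C^2 t^{1/2}/\tilde n^{1/2}$ contribution (after absorbing $2\sqrt{2}\cdot 4 = 8\sqrt{2} \leq 24/2$ in a slightly loose constant). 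A union bound over the two events then gives the full statement with probability at least $1 - 2e^{-t}$.

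The main obstacle will be tracking the chaining constants cleanly so that the resulting log-radius is $\log(1 + \lVert k\rVert_\infty^2\rho^2/(8C^2))$ rather than the $\log(2 + \lVert k\rVert_\infty^2\rho^2/C^2)$ of the expectation version; this requires invoking Lemma \ref{lCoverInt} with the radius appropriate to the tail form of Giné--Nickl's exercise and absorbing numerical constants into the overall factor of $1536$. A secondary care point is that the chaining tail bound is stated for separable subgaussian processes, so one must verify separability of $(F, \tilde\sigma\lVert\cdot\rVert_\infty/\tilde n^{1/2})$ and of the process $W$, both of which follow from Lemma \ref{lContEst} and the fact that $W$ is continuous in the $L^\infty$ metric, exactly as in the proof of Lemma \ref{lGaussProc}.
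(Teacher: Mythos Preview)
Your decomposition is exactly the paper's: apply Lemma \ref{lIneq} with $A=F$ on the validation sample, control $\lVert V\hat h_{r_0}-g\rVert_{L^2(\tilde P_{\tilde n})}^2$ by Hoeffding, control the cross term by chaining, and union bound. The Hoeffding part matches (your constant is even slightly sharper than the paper's, which uses the cruder range $8C^2$ to get $\sqrt{32}\,C^2 t^{1/2}/\tilde n^{1/2}\leq 6C^2 t^{1/2}/\tilde n^{1/2}$).

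The one substantive difference is in the chaining step. You invoke Exercise~1 of Section~2.3 of \citet{gine2015mathematical} as a \emph{direct} tail bound of the form $C_0\int_0^{D}(\log N)^{1/2}\,d\eps + C_1 D\,t^{1/2}$. In the paper that exercise is used instead as a $\psi_2$-norm bound on the supremum: conditional on $(\tilde X,X,Y)$ one obtains
\[
\Bigl\lVert \sup_{f\in F}|W(f)|\Bigr\rVert_{\psi_2}\;\le\;1536^{1/2}\,\frac{\tilde\sigma}{\tilde n^{1/2}}\int_0^{2C}(\log N(F,\lVert\cdot\rVert_\infty,u))^{1/2}\,du,
\]
then one centres via the triangle inequality and Jensen (doubling the constant), converts $\psi_2$ to a subgaussian parameter via Lemma~\ref{lPsi2} (factor $6$), and finally Chernoff-bounds. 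This is precisely how the global factor $t^{1/2}$ ends up multiplying the \emph{entire} entropy integral, and how the constant $1536=4\cdot 384$ (with $384=\sqrt{2\cdot 6\cdot(2\cdot1536^{1/2})^2}$) arises. Your direct-tail route would yield a bound of the shape ``entropy integral $+$ diameter $\cdot t^{1/2}$'' rather than ``entropy integral $\cdot t^{1/2}$'', so it does not reproduce the stated inequality as written (in particular not for small $t>0$). To land on the exact statement and constants, you should follow the paper's $\psi_2\to$ subgaussian $\to$ Chernoff route; your proposal otherwise has all the right ingredients, including the use of the $a=1$ case of Lemma~\ref{lCoverInt} with upper limit $2C$ to produce the $\log\bigl(1+\lVert k\rVert_\infty^2\rho^2/(8C^2)\bigr)$ form.
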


\begin{proof}
By Lemma \ref{lIneq} with $A = F$ and $n$, $X$, $Y$ and $P_n$ replaced by $\tilde n$, $\tilde X$, $\tilde Y$ and $\tilde P_{\tilde n}$, we have
\begin{equation*}
\lVert V \hat h_{\hat r} -  V \hat h_{r_0} \rVert_{L^2 (\tilde P_{\tilde n})}^2 \leq \frac{4}{\tilde n} \sum_{i = 1}^{\tilde n} (\tilde Y_i - g(\tilde X_i)) (V \hat h_{\hat r} (\tilde X_i) -  V \hat h_{r_0} (\tilde X_i)) + 4 \lVert V \hat h_{r_0} - g \rVert_{L^2 (\tilde P_{\tilde n})}^2.
\end{equation*}
We now bound the right-hand side. We have
\begin{equation*}
\lVert V \hat h_{r_0} - g \rVert_{L^2 (\tilde P_{\tilde n})}^2 = \frac{1}{\tilde n} \sum_{i = 1}^{\tilde n} \left ((V \hat h_{r_0} (\tilde X_i) - g(\tilde X_i))^2 - \lVert V \hat h_{r_0} - g \rVert_{L^2 (P)}^2 \right ) + \lVert V \hat h_{r_0} - g \rVert_{L^2 (P)}^2.
\end{equation*}
Since
\begin{equation*}
\left \lvert (V \hat h_{r_0} (\tilde X_i) - g(\tilde X_i))^2 - \lVert V \hat h_{r_0} - g \rVert_{L^2 (P)}^2 \right \rvert \leq 4 C^2
\end{equation*}
for all $1 \leq i \leq \tilde n$, we find
\begin{equation*}
\lVert V \hat h_{r_0} - g \rVert_{L^2 (\tilde P_{\tilde n})}^2 - \lVert V \hat h_{r_0} - g \rVert_{L^2 (P)}^2 > t
\end{equation*}
with probability at most
\begin{equation*}
\exp \left ( - \frac{\tilde n t^2}{32 C^4} \right ).
\end{equation*}
by Hoeffding's inequality. Therefore, we have
\begin{equation*}
\lVert V \hat h_{r_0} - g \rVert_{L^2 (\tilde P_{\tilde n})}^2 - \lVert V \hat h_{r_0} - g \rVert_{L^2 (P)}^2 \leq \frac{32^{1/2} C^2 t^{1/2}}{\tilde n^{1/2}} \leq \frac{6 C^2 t^{1/2}}{\tilde n^{1/2}} 
\end{equation*}
with probability at least $1 - e^{-t}$. Now let $f_0 = V \hat h_{r_0}$ and
\begin{equation*}
W(f) = \frac{1}{\tilde n} \sum_{i = 1}^{\tilde n}  (\tilde Y_i - g(\tilde X_i)) (f(\tilde X_i) -  f_0 (\tilde X_i))
\end{equation*}
for $f \in F$. $W$ is $\tilde \sigma^2 \lVert \cdot \rVert_\infty^2/\tilde n$-subgaussian given $\tilde X$ and separable on $(F, \tilde \sigma \lVert \cdot \rVert_\infty/\tilde n^{1/2})$ by Lemma \ref{lGaussProc}. The diameter of $(F, \tilde \sigma \lVert \cdot \rVert_\infty/\tilde n^{1/2})$ is
\begin{equation*}
D = \sup_{f_1, f_2 \in F} \tilde \sigma \lVert f_1 - f_2 \rVert_\infty/\tilde n^{1/2} \leq 2 C \tilde \sigma/\tilde n^{1/2}.
\end{equation*}
From Lemma \ref{lCoverInt}, we have
\begin{align*}
\int_0^\infty ( \log(N(F, \tilde \sigma \lVert \cdot \rVert_\infty/\tilde n^{1/2}, \eps)))^{1/2} d\eps &= \int_0^\infty ( \log(N(F, \lVert \cdot \rVert_\infty, \tilde n^{1/2} \eps / \tilde \sigma)))^{1/2} d\eps \\
&= \frac{\tilde \sigma}{\tilde n^{1/2}} \int_0^\infty ( \log(N(F, \lVert \cdot \rVert_\infty, u)))^{1/2} du
\end{align*}
is finite. Hence, by Exercise 1 of Section 2.3 of \citet{gine2015mathematical} and Lemma \ref{lCoverInt}, we have
\begin{equation*}
\left \lVert \sup_{f \in F} \lvert W(f) \rvert \middle \vert \tilde X, X, Y \right \rVert_{\psi_2}
\end{equation*}
is at most
\begin{align*}
&\left \lVert W(f_0) \middle \vert \tilde X, X, Y \right \rVert_{\psi_2} + 1536^{1/2} \int_0^{2 C \tilde \sigma/\tilde n^{1/2}} ( \log N(F, \tilde \sigma \lVert \cdot \rVert_\infty/\tilde n^{1/2}, \eps))^{1/2} d\eps \\
= \; &1536^{1/2} \int_0^{2 C \tilde \sigma/\tilde n^{1/2}} ( \log N(F, \lVert \cdot \rVert_\infty, \tilde n^{1/2} \eps / \tilde \sigma))^{1/2} d\eps \\
= \; &\frac{1536^{1/2} \tilde \sigma}{\tilde n^{1/2}} \int_0^{2 C} ( \log N(F, \lVert \cdot \rVert_\infty, u))^{1/2} du \\
\leq \; &\frac{1536^{1/2} \tilde \sigma}{\tilde n^{1/2}} \left ( 2 \left ( \log \left (1 + \frac{\lVert k \rVert_\infty^2 \rho^2}{8 C^2} \right ) \right )^{1/2} C + (2 \pi)^{1/2} C \right ) \\
= \; &\frac{3072^{1/2} C \tilde \sigma}{\tilde n^{1/2}} \left ( \left ( 2 \log \left (1 + \frac{\lVert k \rVert_\infty^2 \rho^2}{8 C^2} \right ) \right )^{1/2} + \pi^{1/2} \right ),
\end{align*}
noting $W(f_0) = 0$. By Chernoff bounding, we have $\sup_{f \in F} \lvert W(f) \rvert$ is at most
\begin{align*}
&\frac{3072^{1/2} C \tilde \sigma (t + \log(2))^{1/2}}{\tilde n^{1/2}} \left ( \left ( 2 \log \left (1 + \frac{\lVert k \rVert_\infty^2 \rho^2}{8 C^2} \right ) \right )^{1/2} + \pi^{1/2} \right ) \\
\leq \; & \frac{73 C \tilde \sigma t^{1/2}}{\tilde n^{1/2}} \left ( \left ( 2 \log \left (1 + \frac{\lVert k \rVert_\infty^2 \rho^2}{8 C^2} \right ) \right )^{1/2} + \pi^{1/2} \right )
\end{align*}
with probability at least $1 - e^{-t}$. In particular,
\begin{equation*}
\frac{1}{\tilde n} \sum_{i = 1}^{\tilde n} (\tilde Y_i - g(\tilde X_i)) (V \hat h_{\hat r} (\tilde X_i) -  V \hat h_{r_0} (\tilde X_i))
\end{equation*}
is at most
\begin{equation*}
\frac{73 C \tilde \sigma t^{1/2}}{\tilde n^{1/2}} \left ( \left ( 2 \log \left (1 + \frac{\lVert k \rVert_\infty^2 \rho^2}{8 C^2} \right ) \right )^{1/2} + \pi^{1/2} \right )
\end{equation*}
with probability at least $1 - e^{-t}$. The result follows.
\end{proof}
The following lemma is useful for moving the bound on the distance between $V \hat h_{\hat r}$ and $V \hat h_{r_0}$ from the $L^2(\tilde P_{\tilde n})$ norm to the $L^2 (P)$ norm for $r_0 \in R$.

\begin{lemma} \label{lProbValSwapNorm}
Assume \ref{H}. Let $r_0 \in R$, $f_0 = V \hat h_{r_0}$ and $t \geq 1$. With probability at least $1 - e^{-t}$, we have
\begin{equation*}
\sup_{f \in F} \left \lvert \lVert f -  f_0 \rVert_{L^2 (\tilde P_{\tilde n})}^2 - \lVert f -  f_0 \rVert_{L^2 (P)}^2 \right \rvert
\end{equation*}
is at most
\begin{equation*}
\frac{10 C^2 t^{1/2}}{\tilde n^{1/2}} \left ( 1 + 32 \left ( \left ( 2 \log \left (2 + \frac{\lVert k \rVert_\infty^2 \rho^2}{C^2} \right ) \right )^{1/2} + \pi^{1/2} \right ) \right ) + \frac{8 C^2 t}{3 \tilde n}.
\end{equation*}
\end{lemma}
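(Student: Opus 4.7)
The plan is to mirror the proof of Lemma \ref{lProbBddSwapNorm}: combine the expectation bound from Lemma \ref{lValSwapNorm} with the Talagrand-type concentration inequality provided by Lemma \ref{lTal}.

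First I would set $A = \{ f - f_0 : f \in F \}$ and
\begin{equation*}
Z = \sup_{f \in F} \left\lvert \lVert f - f_0 \rVert_{L^2(\tilde P_{\tilde n})}^2 - \lVert f - f_0 \rVert_{L^2(P)}^2 \right\rvert.
\end{equation*}
Assumption \ref{H} makes $H$ separable with bounded kernel, so the map $r \mapsto V \hat h_r$ is continuous in $r$ by Lemma \ref{lContEst} and hence $F$ (parametrised by the separable subset $R$) and $A$ are separable subsets of $L^\infty$. Every element of $A$ has sup-norm at most $2 C$, since $f$ and $f_0$ are both $V$-clipped into $[-C, C]$. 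This puts us in the setting of Lemma \ref{lTal}.

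Next I would apply Lemma \ref{lTal} to $Z$ with $D = 2 C$ and with $(n, X)$ replaced by $(\tilde n, \tilde X)$. This yields
\begin{equation*}
Z \leq \expect(Z) + \left( \frac{32 C^4 t}{\tilde n} + \frac{16 C^2 \expect(Z) t}{\tilde n} \right)^{1/2} + \frac{8 C^2 t}{3 \tilde n}
\end{equation*}
with probability at least $1 - e^{-t}$. I would then insert the in-expectation bound from Lemma \ref{lValSwapNorm},
\begin{equation*}
\expect(Z) \leq \frac{64 C^2 L}{\tilde n^{1/2}}, \qquad L := \left( 2 \log \left( 2 + \frac{\lVert k \rVert_\infty^2 \rho^2}{C^2} \right) \right)^{1/2} + \pi^{1/2}.
\end{equation*}

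Finally I would simplify. Applying $\sqrt{a+b} \leq \sqrt{a} + \sqrt{b}$ splits the middle term into $\sqrt{32}\, C^2 t^{1/2} \tilde n^{-1/2}$ plus $4 C ( \expect(Z) t / \tilde n )^{1/2}$; substituting the expectation bound and using $t \geq 1$, $\tilde n \geq 1$ (so $\tilde n^{-3/4} \leq \tilde n^{-1/2}$), together with $L \geq \pi^{1/2} > 1$ (so $L^{1/2} \leq L$), collapses every $t^{1/2}/\tilde n^{1/2}$ contribution into a single term of the form $(10 C^2 t^{1/2}/\tilde n^{1/2})(1 + 32 L)$, while the residual $8 C^2 t/(3\tilde n)$ is already in the required form. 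I do not anticipate any substantive obstacle: the argument is the direct analogue of Lemma \ref{lProbBddSwapNorm}, with $r B_H$ replaced by $F$, the $L^\infty$ diameter $\lVert k \rVert_\infty r$ replaced by $C$, and $(n, X)$ replaced by $(\tilde n, \tilde X)$. The only care needed is in bookkeeping the numerical constants so that the stated inequality holds exactly.
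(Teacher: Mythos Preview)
Your proposal is correct and matches the paper's proof essentially line for line: define $A=\{f-f_0:f\in F\}$ and $Z$, observe separability via Lemma \ref{lContEst} and the uniform bound $2C$, apply Lemma \ref{lTal} with $(n,P_n)$ replaced by $(\tilde n,\tilde P_{\tilde n})$, and insert the expectation bound from Lemma \ref{lValSwapNorm}. The paper simply writes ``The result follows'' after stating these two ingredients, whereas you spell out the constant bookkeeping; your verification that the constants collapse into $10(1+32L)$ is sound.
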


\begin{proof}
Let $A = \{ f - f_0 : f \in F \}$ and
\begin{equation*}
Z = \sup_{f \in F} \left \lvert \lVert f -  f_0 \rVert_{L^2 (\tilde P_{\tilde n})}^2 - \lVert f -  f_0 \rVert_{L^2 (P)}^2 \right \rvert.
\end{equation*}
Then $A \subs L^\infty$ is separable by Lemma \ref{lContEst}. Furthermore, $\lVert f - f_0 \rVert_\infty \leq 2 C$ for all $f \in F$. By Lemma \ref{lTal} with $n$ and $P_n$ replaced by $\tilde n$ and $\tilde P_{\tilde n}$, we have
\begin{equation*}
Z \leq \expect(Z) + \left ( \frac{32 C^4 t}{\tilde n} + \frac{16 C^2 \expect(Z) t}{\tilde n} \right )^{1/2} + \frac{8 C^2 t}{3 \tilde n}
\end{equation*}
with probability at least $1 - e^{-t}$. By Lemma \ref{lValSwapNorm}, we have
\begin{equation*}
\expect(Z) \leq \frac{64 C^2}{\tilde n^{1/2}} \left ( \left ( 2 \log \left (2 + \frac{\lVert k \rVert_\infty^2 \rho^2}{C^2} \right ) \right )^{1/2} + \pi^{1/2} \right ).
\end{equation*}
The result follows.
\end{proof}
We move the bound on the distance between $V \hat h_{\hat r}$ and $V \hat h_{r_0}$ from the $L^2(\tilde P_{\tilde n})$ norm to the $L^2 (P)$ norm for $r_0 \in R$.

\begin{corollary} \label{cProbValTrueNorm}
Assume \ref{H} and \ref{tY}. Let $r_0 \in R$ and $t \geq 1$. With probability at least $1 - 3 e^{-t}$, we have
\begin{equation*}
\lVert V \hat h_{\hat r} -  V \hat h_{r_0} \rVert_{L^2 (P)}^2
\end{equation*}
is at most
\begin{align*}
&\frac{10 C (C + \tilde \sigma) t^{1/2}}{\tilde n^{1/2}} \left ( 1 + 32 \left ( \left ( 2 \log \left (2 + \frac{\lVert k \rVert_\infty^2 \rho^2}{C^2} \right ) \right )^{1/2} + \pi^{1/2} \right ) \right ) \\
+ \; &\frac{24 C^2 t^{1/2}}{\tilde n^{1/2}} + \frac{8 C^2 t}{3 \tilde n} + 4 \lVert V \hat h_{r_0} - g \rVert_{L^2 (P)}^2.
\end{align*}
\end{corollary}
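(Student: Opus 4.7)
The plan is to combine Lemmas~\ref{lProbValEmpNorm} and \ref{lProbValSwapNorm} via a union bound and the decomposition
\begin{equation*}
\lVert V \hat h_{\hat r} - V \hat h_{r_0} \rVert_{L^2(P)}^2 = \lVert V \hat h_{\hat r} - V \hat h_{r_0} \rVert_{L^2(\tilde P_{\tilde n})}^2 + \Bigl( \lVert V \hat h_{\hat r} - V \hat h_{r_0} \rVert_{L^2(P)}^2 - \lVert V \hat h_{\hat r} - V \hat h_{r_0} \rVert_{L^2(\tilde P_{\tilde n})}^2 \Bigr).
\end{equation*}
This exactly mirrors the derivation of Corollary~\ref{cValTrueNorm} from Lemmas~\ref{lValEmpNorm} and \ref{lValSwapNorm}, but now in high probability rather than in expectation.

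First I would invoke Lemma~\ref{lProbValEmpNorm}, which yields a bound on the first summand that holds on an event of probability at least $1 - 2e^{-t}$. Since $V \hat h_{\hat r} \in F$ by definition of $\hat r$, I can then control the second summand by applying Lemma~\ref{lProbValSwapNorm} with $f = V \hat h_{\hat r}$ and $f_0 = V \hat h_{r_0}$, which provides a uniform bound over $F$ holding on an event of probability at least $1 - e^{-t}$. A union bound intersects these two events to give probability at least $1 - 3e^{-t}$, on which both estimates hold simultaneously.

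The only nuisance in the bookkeeping is that the logarithmic factor in Lemma~\ref{lProbValEmpNorm} involves $1 + \lVert k \rVert_\infty^2 \rho^2 / (8 C^2)$, whereas the target expression (and Lemma~\ref{lProbValSwapNorm}) uses $2 + \lVert k \rVert_\infty^2 \rho^2 / C^2$. Since the former is pointwise smaller, monotonicity of $\log$ lets me upper bound the Lemma~\ref{lProbValEmpNorm} contribution using the larger argument without loss. Adding the two resulting bounds, one collects a $C\tilde \sigma$ contribution $1536 = 48 \cdot 32$ and a $C^2$ contribution $320 = 10 \cdot 32$ inside the $32(\cdots)$ bracket, together with the residual $10 C^2 t^{1/2}/\tilde n^{1/2}$ from Lemma~\ref{lProbValSwapNorm}, the $24 C^2 t^{1/2}/\tilde n^{1/2}$ from Lemma~\ref{lProbValEmpNorm}, the $8 C^2 t/(3 \tilde n)$ Talagrand term, and $4 \lVert V \hat h_{r_0} - g \rVert_{L^2(P)}^2$. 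Factoring $10 C^2 + 48 C \tilde \sigma = 2 C(5C + 24 \tilde \sigma)$ rearranges these into exactly the stated expression.

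The hard part is not conceptual but accounting: one must carefully verify that the constants in the $32(\cdots)$ bracket arise from the \emph{same} combination of $C\tilde\sigma$ and $C^2$ prefactors, which is why the monotonicity argument for the log is needed before summing. Once the events are intersected and the logarithmic arguments are harmonised, the claim follows by straightforward term-by-term addition, and no further probabilistic inequality is required.
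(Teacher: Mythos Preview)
Your proposal is correct and follows essentially the same route as the paper: invoke Lemma~\ref{lProbValEmpNorm} for the empirical-norm term, Lemma~\ref{lProbValSwapNorm} (via $V\hat h_{\hat r}\in F$) for the norm-swap term, combine by a union bound, harmonise the logarithmic arguments by monotonicity, and add. The only quibble is that the sum of the two lemma bounds is actually slightly \emph{smaller} than the stated target (the $48C\tilde\sigma$ contribution to the ``$1$'' inside the bracket is slack), so ``exactly'' should read ``at most''; this does not affect the argument.
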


\begin{proof}
By Lemma \ref{lProbValEmpNorm}, we have
\begin{equation*}
\lVert V \hat h_{\hat r} -  V \hat h_{r_0} \rVert_{L^2 (\tilde P_{\tilde n})}^2
\end{equation*}
is at most
\begin{equation*}
\frac{292 C \tilde \sigma t^{1/2}}{\tilde n^{1/2}} \left ( \left ( 2 \log \left (1 + \frac{\lVert k \rVert_\infty^2 \rho^2}{8 C^2} \right ) \right )^{1/2} + \pi^{1/2} \right ) + \frac{24 C^2 t^{1/2}}{\tilde n^{1/2}}  + 4 \lVert V \hat h_{r_0} - g \rVert_{L^2 (P)}^2
\end{equation*}
with probability at least $1 - 2 e^{-t}$. Let $f_0 = V \hat h_{r_0}$. Since $\hat h_{\hat r} \in F$, by Lemma \ref{lProbValSwapNorm} we have
\begin{align*}
&\lVert V \hat h_{\hat r} -  V \hat h_{r_0} \rVert_{L^2 (P)}^2 - \lVert V \hat h_{\hat r} -  V \hat h_{r_0} \rVert_{L^2 (\tilde P_{\tilde n})}^2 \\
\leq \; &\sup_{f \in F} \left \lvert \lVert f -  f_0 \rVert_{L^2 (\tilde P_{\tilde n})}^2 - \lVert f -  f_0 \rVert_{L^2 (P)}^2 \right \rvert \\
\leq \; &\frac{10 C^2 t^{1/2}}{\tilde n^{1/2}} \left ( 1 + 32 \left ( \left ( 2 \log \left (2 + \frac{\lVert k \rVert_\infty^2 \rho^2}{C^2} \right ) \right )^{1/2} + \pi^{1/2} \right ) \right ) + \frac{8 C^2 t}{3 \tilde n}
\end{align*}
with probability at least $1 - e^{-t}$. The result follows.
\end{proof}
We bound the distance between $V \hat h_{\hat r}$ and $g$ in the $L^2(P)$ norm to prove Theorem \ref{tProbValBound}.

{\bf Proof of Theorem \ref{tProbValBound}}
We have
\begin{align*}
\lVert V \hat h_{\hat r} - g \rVert_{L^2 (P)}^2 &\leq \left ( \lVert V \hat h_{\hat r} - V h_{r_0} \rVert_{L^2 (P)}^2 + \lVert V h_{r_0} - g \rVert_{L^2 (P)}^2 \right )^2 \\
&\leq 2 \lVert V \hat h_{\hat r} - V h_{r_0} \rVert_{L^2 (P)}^2 + 2 \lVert V h_{r_0} - g \rVert_{L^2 (P)}^2.
\end{align*}
By Corollary \ref{cProbValTrueNorm}, we have
\begin{equation*}
\lVert V \hat h_{\hat r} -  V \hat h_{r_0} \rVert_{L^2 (P)}^2
\end{equation*}
is at most
\begin{align*}
&\frac{10 C (C + \tilde \sigma) t^{1/2}}{\tilde n^{1/2}} \left ( 1 + 32 \left ( \left ( 2 \log \left (2 + \frac{\lVert k \rVert_\infty^2 \rho^2}{C^2} \right ) \right )^{1/2} + \pi^{1/2} \right ) \right ) \\
+ \; &\frac{24 C^2 t^{1/2}}{\tilde n^{1/2}} + \frac{8 C^2 t}{3 \tilde n} + 4 \lVert V \hat h_{r_0} - g \rVert_{L^2 (P)}^2.
\end{align*}
with probability at least $1 - 3 e^{-t}$. Hence,
\begin{equation*}
\lVert V \hat h_{\hat r} -  g \rVert_{L^2 (P)}^2
\end{equation*}
is at most
\begin{align*}
&\frac{20 C (C + \tilde \sigma) t^{1/2}}{\tilde n^{1/2}} \left ( 1 + 32 \left ( \left ( 2 \log \left (2 + \frac{\lVert k \rVert_\infty^2 \rho^2}{C^2} \right ) \right )^{1/2} + \pi^{1/2} \right ) \right ) \\
+ \; &\frac{48 C^2 t^{1/2}}{\tilde n^{1/2}} + \frac{16 C^2 t}{3 \tilde n} + 10 \lVert V \hat h_{r_0} - g \rVert_{L^2 (P)}^2.
\end{align*}
The result follows. \hfill \BlackBox

We assume the conditions of Theorem \ref{tProbBddInterBound} to prove Theorem \ref{tProbValInterBound}.

{\bf Proof of Theorem \ref{tProbValInterBound}}
If we assume \ref{R1}, then $r_0 = a n^{(1-\beta)/(2 (1+\beta))} \in R$ and
\begin{equation*}
\lVert V \hat h_{r_0} -  g \rVert_{L^2 (P)}^2
\end{equation*}
is at most
\begin{align*}
&\left ( D_3 + D_4 a^{1/2} n^{(1-\beta)/(4 (1+\beta))} + D_5 a n^{(1-\beta)/(2 (1+\beta))} \right ) t^{1/2} n^{- 1/2} \\
+ \; &D_6 t n^{-1} + D_7 a^{- 2 \beta/(1-\beta)} n^{- \beta/(1+\beta)}
\end{align*}
with probability at least $1 - 3 e^{-t}$ for some constants $D_3, D_4, D_5, D_6, D_7 > 0$ not depending on $n$, $\tilde n$ or $t$ by Theorem \ref{tProbBddInterBound}. If we assume \ref{R2}, then there is at least one $r_0 \in R$ such that
\begin{equation*}
a n^{(1-\beta)/(2 (1+\beta))} \leq r_0 < a n^{(1-\beta)/(2 (1+\beta))} + b
\end{equation*}
and
\begin{equation*}
\lVert V \hat h_{r_0} -  g \rVert_{L^2 (P)}^2
\end{equation*}
is at most
\begin{align*}
&\left ( D_3 + D_4 r_0^{1/2} + D_5 r_0 \right ) t^{1/2} n^{- 1/2} + D_6 t n^{-1} + D_7 r_0^{- 2 \beta/(1-\beta)} \\
\leq \; &\left ( D_3 + D_4 \left (a^{1/2} n^{(1-\beta)/(4 (1+\beta))} + b^{1/2} \right ) + D_5 \left (a n^{(1-\beta)/(2 (1+\beta))} + b \right ) \right ) t^{1/2} n^{- 1/2} \\
+ \; &D_6 t n^{-1} + D_7a^{- 2 \beta/(1-\beta)} n^{- \beta/(1+\beta)}
\end{align*}
with probability at least $1 - 3 e^{-t}$ by Theorem \ref{tProbBddInterBound}. In either case,
\begin{equation*}
\lVert V \hat h_{r_0} -  g \rVert_{L^2 (P)}^2 \leq D_8 t^{1/2} n^{-\beta/(1+\beta)} + D_9 t n^{-1}
\end{equation*}
for some constants $D_8, D_9 > 0$ not depending on $n$, $\tilde n$ or $t$. By Theorem \ref{tProbValBound}, we have
\begin{equation*}
\lVert V \hat h_{\hat r} -  g \rVert_{L^2 (P)}^2 \leq D_{10} t^{1/2} \log(n)^{1/2} \tilde n^{-1/2} + D_{11} t \tilde n^{-1} + 10 D_8 t^{1/2} n^{-\beta/(1+\beta)} + 10 D_9 t n^{-1}
\end{equation*}
with probability at least $1 - 6 e^{-t}$ for some constants $D_{10}, D_{11} > 0$ not depending on $n$, $\tilde n$ or $t$. Since $\tilde n$ increases at least linearly in $n$, there exists some constant $D_{12} > 0$ such that $\tilde n \geq D_{12} n$. We then have
\begin{equation*}
\lVert V \hat h_{\hat r} -  g \rVert_{L^2 (P)}^2
\end{equation*}
is at most
\begin{align*}
&D_{12}^{-1/2} D_{10} t^{1/2} \log(n)^{1/2} n^{-1/2} + D_{12}^{-1} D_{11} t n^{-1} + 10 D_8 t^{1/2} n^{-\beta/(1+\beta)} + 10 D_9 t n^{-1} \\
\leq \; &D_1 t^{1/2} n^{-\beta/(1+\beta)} + D_2 t n^{-1}
\end{align*}
for some constants $D_1, D_2 > 0$ not depending on $n$, $\tilde n$ or $t$. \hfill \BlackBox

\section{Estimator Calculation and Measurability}

The following result is essentially Theorem 2.1 from \citet{quintana2014measurable}. The authors show that that a strictly-positive-definite matrix which is a $(\comp^{n \times n}, \bor(\comp^{n \times n}))$-valued measurable matrix on $(\Omega, \mathcal{F})$ can be diagonalised by an unitary matrix and a diagonal matrix which are both $(\comp^{n \times n}, \bor(\comp^{n \times n}))$-valued measurable matrices on $(\Omega, \mathcal{F})$. The result holds for non-negative-definite matrices by adding the identity matrix before diagonalisation and subtracting it afterwards. Furthermore, the construction of the unitary matrix produces a matrix with real entries, which is to say an orthogonal matrix, when the strictly-positive-definite matrix has real entries.

\begin{lemma} \label{lDiag}
Let $M$ be a non-negative-definite matrix which is an $(\real^{n \times n}, \bor(\real^{n \times n}))$-valued measurable matrix on $(\Omega, \mathcal{F})$. There exist an orthogonal matrix $A$ and a diagonal matrix $D$ which are both $(\real^{n \times n}, \bor(\real^{n \times n}))$-valued measurable matrices on $(\Omega, \mathcal{F})$ such that $M = A D A^{\trans}$.
\end{lemma}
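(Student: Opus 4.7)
The plan is to reduce to the strictly-positive-definite case handled by the cited Theorem 2.1 of \citet{quintana2014measurable} via a shift by the identity, then undo the shift on the diagonal factor.

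First I would form the matrix $M + I$. Since $M$ is non-negative-definite and measurable, $M + I$ is an $(\real^{n \times n}, \bor(\real^{n \times n}))$-valued measurable matrix on $(\Omega, \mathcal{F})$ whose eigenvalues are at least $1$, and in particular $M + I$ is strictly positive definite. I can therefore apply the cited result pointwise in $\omega$ to diagonalise $M + I$. As noted in the remark preceding the statement, the construction of the unitary factor in Theorem 2.1 of \citet{quintana2014measurable} yields a real matrix (hence orthogonal) when applied to a real strictly-positive-definite matrix, and both factors are jointly measurable in $\omega$. This produces an orthogonal matrix $A$ and a diagonal matrix $\tilde D$, both $(\real^{n \times n}, \bor(\real^{n \times n}))$-valued measurable on $(\Omega, \mathcal{F})$, such that $M + I = A \tilde D A^{\trans}$.

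Next I would define $D = \tilde D - I$, which is again diagonal and measurable as the difference of two measurable matrices. Using the fact that $A$ is orthogonal, so $A A^{\trans} = I$, I compute
\begin{equation*}
A D A^{\trans} = A (\tilde D - I) A^{\trans} = A \tilde D A^{\trans} - A A^{\trans} = (M + I) - I = M,
\end{equation*}
which gives the required decomposition. Measurability of $A$ and $D$ is inherited from the cited theorem and from the fact that subtraction is continuous, hence Borel measurable, on $\real^{n \times n}$.

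The only real content is the invocation of Theorem 2.1 of \citet{quintana2014measurable}, so the main obstacle is simply verifying that the shift-and-shift-back reduction preserves all the needed structure: orthogonality of $A$ is untouched since it is defined from $M + I$ rather than from $M$, the diagonal structure of $D$ is preserved under subtraction of $I$, and joint measurability in $\omega$ is preserved because addition and subtraction of the identity are deterministic Borel operations. No extra work is needed to handle the boundary case where $M$ has $0$ as an eigenvalue, since after shifting the smallest eigenvalue is bounded below by $1$ uniformly in $\omega$.
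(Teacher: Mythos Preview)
Your proposal is correct and follows exactly the argument the paper sketches in the paragraph preceding the lemma: shift by the identity to obtain a strictly-positive-definite matrix, invoke Theorem~2.1 of \citet{quintana2014measurable} (noting that real input yields a real, hence orthogonal, eigenvector matrix), and then subtract the identity from the diagonal factor. You have simply written out in detail what the paper states in one sentence.
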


We prove Lemma \ref{lCalcEst}.

{\bf Proof of Lemma \ref{lCalcEst}}
Let $H_n = \spn \{ k_{X_i} : 1 \leq i \leq n \}$. The subspace $H_n$ is closed in $H$, so there is an orthogonal projection $Q : H \to H_n$. Since $f - Q f \in H_n^\bot$ for all $f \in H$, we have
\begin{equation*}
f(X_i) - (Q f)(X_i) = \langle f - Q f, k_{X_i} \rangle = 0
\end{equation*}
for all $1 \leq i \leq n$. Hence,
\begin{align*}
\inf_{f \in r B_H} \frac{1}{n} \sum_{i = 1}^n (f(X_i) - Y_i)^2 &= \inf_{f \in r B_H} \frac{1}{n} \sum_{i = 1}^n ((Q f)(X_i) - Y_i)^2 \\
&= \inf_{f \in (r B_H) \cap H_n} \frac{1}{n} \sum_{i = 1}^n (f(X_i) - Y_i)^2.
\end{align*}
Let $f \in (r B_H) \cap H_n$ and write
\begin{equation*}
f = \sum_{i = 1}^n a_i k_{X_i}
\end{equation*}
for some $a \in \real^n$. Then
\begin{equation*}
\frac{1}{n} \sum_{i = 1}^n (f(X_i) - Y_i)^2 = n^{-1} (K a - Y)^{\trans} (K a - Y)
\end{equation*}
and $\lVert f \rVert_H^2 = a^{\trans} K a$, so we can write the norm constraint as $a^{\trans} K a + s = r^2$, where $s \geq 0$ is a slack variable. The Lagrangian can be written as
\begin{align*}
L(a, s; \mu) &= n^{-1} (K a - Y)^{\trans} (K a - Y) + \mu (a^{\trans} K a + s - r^2) \\
&= a^{\trans} ( n^{-1} K^2 + \mu K) a - 2 n^{-1} Y^{\trans} K a + \mu s + n^{-1} Y^{\trans} Y - \mu r^2,
\end{align*}
where $\mu$ is the Lagrangian multiplier for the norm constraint. We seek to minimise the Lagrangian for a fixed value of $\mu$. Note that we require $\mu \geq 0$ for the Lagrangian to have a finite minimum, due to the term in $s$. We have
\begin{equation*}
\frac{\partial L}{\partial a} = 2 ( n^{-1} K^2 + \mu K) a - 2 n^{-1} K Y.
\end{equation*}
This being 0 is equivalent to $K ((K + n \mu I) a - Y) = 0$.

Since the kernel $k$ is a measurable function on $(S \times S, \mathcal{S} \otimes \mathcal{S})$ and the $X_i$ are $(S, \mathcal{S})$-valued random variables on $(\Omega, \mathcal{F})$, we find that $K$ is an $(\real^{n \times n}, \bor(\real^{n \times n}))$-valued measurable matrix on $(\Omega, \mathcal{F})$. Furthermore, since the kernel $k$ takes real values and is non-negative definite, $K$ is non-negative definite with real entries. By Lemma \ref{lDiag}, there exist an orthogonal matrix $A$ and a diagonal matrix $D$ which are both $(\real^{n \times n}, \bor(\real^{n \times n}))$-valued measurable matrices on $(\Omega, \mathcal{F})$ such that $K = A D A^{\trans}$. Note that the diagonal entries of $D$ must be non-negative and we may assume that they are non-increasing. Inserting this diagonalisation into $K ((K + n \mu I) a - Y) = 0$ gives
\begin{equation*}
A D ((D + n \mu I) A^{\trans} a - A^{\trans} Y) = 0.
\end{equation*}
Since $A$ has the inverse $A^{\trans}$, this is equivalent to
\begin{equation*}
D ((D + n \mu I) A^{\trans} a - A^{\trans} Y) = 0.
\end{equation*}
This in turn is equivalent to
\begin{equation*}
(A^{\trans} a)_i = (D_{i, i} + n \mu)^{-1} (A^{\trans} Y)_i
\end{equation*}
for $1 \leq i \leq m$. The same $f$ is produced for all such $a$, because if $w$ is the difference between two such $a$, then $(A^{\trans} w)_i = 0$ for $1 \leq i \leq m$ and the squared $H$ norm of
\begin{equation*}
\sum_{i = 1}^n w_i k_{X_i}
\end{equation*}
is $w^{\trans} K w = w^{\trans} A D A^{\trans} w = 0$. Hence, we are free to set $(A^{\trans} a)_i = 0$ for $m+1 \leq i \leq n$. This uniquely defines $A^{\trans} a$, which in turn uniquely defines $a$, since $A^{\trans}$ has the inverse $A$. Note that this definition of $a$ is measurable on $(\Omega \times [0, \infty), \mathcal{F} \otimes \bor([0, \infty)))$, where $\mu$ varies in $[0, \infty)$.

We now search for a value of $\mu$ such that $a$ and $s$ satisfy the norm constraint. We call this value $\mu(r)$. There are two cases. If
\begin{equation*}
r^2 < \sum_{i = 1}^m D_{i, i}^{-1} (A^{\trans} Y)_i^2,
\end{equation*}
then the $a$ above and $s = 0$ minimise $L$ for $\mu = \mu (r) > 0$ and satisfy the norm constraint, where $\mu(r)$ satisfies
\begin{equation*}
\sum_{i = 1}^m \frac{D_{i, i}}{(D_{i, i} + n \mu(r))^2} (A^{\trans} Y)_i^2 = r^2.
\end{equation*}
Otherwise, the $a$ above and
\begin{equation*}
s =r^2 - \sum_{i = 1}^m D_{i, i}^{-1} (A^{\trans} Y)_i^2 \geq 0
\end{equation*}
minimise $L$ for $\mu = \mu(r) = 0$ and satisfy the norm constraint. Hence, the Lagrangian sufficiency theorem shows
\begin{equation*}
\hat h_r = \sum_{i = 1}^n a_i k_{X_i}
\end{equation*}
for the $a$ above with $\mu = \mu(r)$ for $r > 0$. We also have $\hat h_0 = 0$.

Since $\mu(r) > 0$ is strictly decreasing for
\begin{equation*}
r^2 < \sum_{i = 1}^m D_{i, i}^{-1} (A^{\trans} Y)_i^2
\end{equation*}
and $\mu(r) = 0$ otherwise, we find
\begin{equation*}
\{ \mu(r) \leq \mu \} = \left \{ \sum_{i = 1}^m \frac{D_{i, i}}{(D_{i, i} + n \mu)^2} (A^{\trans} Y)_i^2 \leq r^2 \right \}
\end{equation*}
for $\mu \in [0, \infty)$. Therefore, $\mu(r)$ is measurable on $(\Omega \times [0, \infty), \mathcal{F} \otimes \bor((0, \infty)))$, where $r$ varies in $(0, \infty)$. Hence, the $a$ above with $\mu = \mu(r)$ for $r > 0$ is measurable on $(\Omega \times [0, \infty), \mathcal{F} \otimes \bor((0, \infty)))$, where $r$ varies in $(0, \infty)$. By Lemma 4.25 of \citet{steinwart2008support}, the function $\Phi : S \to H$ by $\Phi(x) = k_x$ is a $(H, \bor(H))$-valued measurable function on $(S, \mathcal{S})$. Hence, $k_{X_i}$ for $1 \leq i \leq n$ are $(H, \bor(H))$-valued random variables on $(\Omega, \mathcal{F})$. Together, these show that $\hat h_r$ is a $(H, \bor(H))$-valued measurable function on $(\Omega \times [0, \infty), \mathcal{F} \otimes \bor([0, \infty)))$, where $r$ varies in $[0, \infty)$, recalling that $\hat h_0$ = 0. \hfill \BlackBox

We prove a continuity result about our estimator.

\begin{lemma} \label{lContEst}
Let $r, s \in [0, \infty)$. We have $\lVert \hat h_r - \hat h_s \rVert_H^2 \leq \lvert r^2 - s^2 \rvert$.
\end{lemma}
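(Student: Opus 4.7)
The plan is to leverage the explicit coefficient representation of $\hat h_r$ from Lemma \ref{lCalcEst} and reduce the claim to an elementary term-by-term inequality on the diagonalisation of $K$. Without loss of generality take $r \leq s$ (and $r > 0$; the case $r = 0$ follows at once from $\hat h_0 = 0$ together with $\lVert \hat h_s\rVert_H \leq s$). I will first dispose of the degenerate case $\mu(r) = 0$, then in the main case show
\begin{equation*}
\lVert \hat h_r - \hat h_s\rVert_H^2 \leq \lVert \hat h_s\rVert_H^2 - \lVert \hat h_r\rVert_H^2,
\end{equation*}
and finally combine this with $\lVert \hat h_r\rVert_H = r$ and $\lVert \hat h_s\rVert_H \leq s$.

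If $\mu(r) = 0$, then the threshold condition in Lemma \ref{lCalcEst} gives $r^2 \geq \sum_{i = 1}^m D_{i,i}^{-1}(A^\trans Y)_i^2$; since $s \geq r$ the same condition holds with $s$, so $\mu(s) = 0$ too, the coefficient formulas coincide, and $\hat h_r = \hat h_s$. In the main case $\mu(r) > 0$, I would expand both sides using $\lVert f\rVert_H^2 = a^\trans K a = \sum_{i = 1}^m D_{i,i}(A^\trans a)_i^2$ for $f = \sum_{j = 1}^n a_j k_{X_j}$, together with $(A^\trans a_t)_i = (D_{i,i} + n\mu(t))^{-1}(A^\trans Y)_i$ for $i \leq m$ (valid whether or not $\mu(t) = 0$). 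Writing $u = n\mu(r)$, $v = n\mu(s)$, $d_i = D_{i,i}$, and noting $u \geq v \geq 0$ by strict decrease of $\mu$ on its positive range, the $i$th summand of $\lVert \hat h_r - \hat h_s\rVert_H^2$ becomes
\begin{equation*}
\frac{d_i (A^\trans Y)_i^2 (u - v)^2}{(d_i + u)^2 (d_i + v)^2},
\end{equation*}
while that of $\lVert \hat h_s\rVert_H^2 - \lVert \hat h_r\rVert_H^2$ becomes
\begin{equation*}
\frac{d_i (A^\trans Y)_i^2 (u - v)(2 d_i + u + v)}{(d_i + u)^2 (d_i + v)^2},
\end{equation*}
so the desired inequality reduces, term by term, to $u - v \leq 2 d_i + u + v$, which is immediate from $d_i, v \geq 0$.

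To conclude, in the main case $\lVert \hat h_r\rVert_H^2 = r^2$ from the defining equation for $\mu(r)$, while $\lVert \hat h_s\rVert_H^2 \leq s^2$ from $\hat h_s \in s B_H$, giving $\lVert \hat h_r - \hat h_s\rVert_H^2 \leq s^2 - r^2 = \lvert r^2 - s^2\rvert$. There is no substantial obstacle; the only minor subtlety is accommodating $\mu(s) = 0$ inside the main case, which is absorbed by the fact that the coefficient formula $(D_{i,i} + n\mu(s))^{-1}(A^\trans Y)_i$ remains meaningful at $\mu(s) = 0$ and one still has $u > v = 0$.
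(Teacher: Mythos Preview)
Your proof is correct and follows essentially the same route as the paper: both reduce to the diagonalisation of $K$ from Lemma \ref{lCalcEst}, handle the cases $r=0$ and $\mu(r)=0$ identically, and in the main case establish $\lVert \hat h_r - \hat h_s\rVert_H^2 \leq \lVert \hat h_s\rVert_H^2 - \lVert \hat h_r\rVert_H^2$ before concluding via $\lVert \hat h_r\rVert_H^2 = r^2$ and $\lVert \hat h_s\rVert_H^2 \leq s^2$. The only cosmetic difference is that the paper obtains the intermediate inequality via the inner-product bound $\langle \hat h_r,\hat h_s\rangle_H \geq \lVert \hat h_r\rVert_H^2$ (a one-line monotonicity in $\mu$) and then expands the square, whereas you compare the two sums term by term directly; the underlying computation is the same.
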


\begin{proof}
Recall the diagonalisation of $K = A D A^{\trans}$ from Lemma \ref{lCalcEst}. If $u, v \in \real^n$ and
\begin{equation*}
h_1 = \sum_{i = 1}^n u_i k_{X_i} \text{ and } h_2 = \sum_{i = 1}^n v_i k_{X_i},
\end{equation*}
then $\langle h_1, h_2 \rangle_H = u^{\trans} K v = (A^{\trans} u)^{\trans} D (A^{\trans} v)$. Let $s > r$. If $r > 0$ then, by Lemma \ref{lCalcEst}, we have
\begin{align*}
\langle \hat h_r, \hat h_s \rangle_H &= \sum_{i = 1}^m \frac{D_{i, i}}{(D_{i, i} + n \mu(r)) (D_{i, i} + n \mu(s))} (A^{\trans} Y)_i^2 \\
&\geq \sum_{i = 1}^m \frac{D_{i, i}}{(D_{i, i} + n \mu(r))^2} (A^{\trans} Y)_i^2 \\
&= \lVert \hat h_r \rVert_H^2.
\end{align*}
Furthermore, again by Lemma \ref{lCalcEst}, if $\mu(r) > 0$ then $\lVert \hat h_r \lVert_H^2 = r^2$ and
\begin{align*}
\lVert \hat h_r - \hat h_s \rVert_H^2 &= \lVert \hat h_r \rVert_H^2 + \lVert \hat h_s \rVert_H^2 - 2 \langle \hat h_r, \hat h_s \rangle_H \\
&\leq \lVert \hat h_s \rVert_H^2 - \lVert \hat h_r \rVert_H^2 \\
&= \lVert \hat h_s \rVert_H^2 - r^2 \\
&\leq s^2 - r^2.
\end{align*}
Otherwise, $\mu(r) = 0$ and so $\mu(s) = 0$ by Lemma \ref{lCalcEst}, which means $\hat h_r = \hat h_s$. If $r = 0$ then $\hat h_r = 0$ and $\lVert \hat h_r - \hat h_s \rVert_H^2 = \lVert \hat h_s \rVert_H^2 \leq s^2$. Hence, whenever $r < s$, we have $\lVert \hat h_r - \hat h_s \rVert_H^2 \leq s^2 - r^2$. The result follows.
\end{proof}
We also have the estimator $\hat r$ when performing validation.

\begin{lemma} \label{lValMeasEst}
We have that $\hat r$ is a random variable on $(\Omega, \mathcal{F})$.
\end{lemma}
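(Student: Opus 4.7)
The plan is to show measurability of $\hat r$ by establishing joint measurability of the validation objective in $(r,\omega)$ and then exploiting the compactness of $R$ together with the tie-breaking rule. Define
\begin{equation*}
\phi(r,\omega) = \frac{1}{\tilde n} \sum_{i=1}^{\tilde n} (V \hat h_r(\omega)(\tilde X_i(\omega)) - \tilde Y_i(\omega))^2.
\end{equation*}
I would first verify two key properties of $\phi$: (a) for each fixed $r \in R$, $\phi(r,\cdot)$ is $\mathcal{F}$-measurable; and (b) for each fixed $\omega \in \Omega$, $\phi(\cdot,\omega)$ is continuous on $R$.

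For (a), Lemma \ref{lCalcEst} gives that $\omega \mapsto \hat h_r(\omega)$ is measurable from $(\Omega,\mathcal{F})$ to $(H,\bor(H))$. By assumption \ref{H} and Lemma 4.25 of \citet{steinwart2008support}, the map $x \mapsto k_x$ is measurable from $(S,\mathcal{S})$ to $(H,\bor(H))$, so the evaluation map $(x,h) \mapsto h(x) = \langle h, k_x \rangle_H$ is jointly measurable. Composing $\omega \mapsto (\tilde X_i(\omega), \hat h_r(\omega))$ with this evaluation, with the continuous clipping $V$, and summing yields measurability of $\phi(r,\cdot)$. For (b), Lemma \ref{lContEst} gives $\lVert \hat h_r - \hat h_s \rVert_H^2 \leq \lvert r^2 - s^2 \rvert$, so by the reproducing kernel property $\lVert f \rVert_\infty \leq \lVert k \rVert_\infty \lVert f \rVert_H$ the map $r \mapsto \hat h_r$ is continuous from $R$ into $L^\infty$; since $V$ is 1-Lipschitz, $\phi(\cdot,\omega)$ is continuous.

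Next, define $m(\omega) = \min_{r \in R} \phi(r,\omega)$, which is attained by (b) and compactness of $R$. Choose a countable dense subset $R_0 \subs R$ (which exists since $R$ is a compact metric space); by continuity, $m(\omega) = \inf_{r \in R_0} \phi(r,\omega)$, so $m$ is measurable as a countable infimum of measurable functions. For each $t \in [0,\infty)$ with $R \cap [0,t] \neq \emptyset$, this intersection is compact and has a countable dense subset, so $m_t(\omega) := \min_{r \in R \cap [0,t]} \phi(r,\omega)$ is measurable by the same argument.

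Finally, I would identify $\{\hat r \leq t\}$ with $\{m_t = m\}$. By continuity of $\phi(\cdot,\omega)$, the set of minimisers $\{r \in R : \phi(r,\omega) = m(\omega)\}$ is a closed subset of the compact set $R$, hence compact, so the infimum defining $\hat r(\omega)$ is attained. Therefore $\hat r(\omega) \leq t$ if and only if this set of minimisers meets $[0,t]$, which by the attainment argument applied to $R \cap [0,t]$ holds precisely when $m_t(\omega) = m(\omega)$. Consequently $\{\hat r \leq t\} = \{m_t = m\}$ when $R \cap [0,t] \neq \emptyset$ and is empty otherwise, which is measurable; since this holds for all $t \geq 0$, $\hat r$ is a random variable on $(\Omega,\mathcal{F})$. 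The main obstacle will be verifying (a) cleanly, since it requires stitching together the joint measurability of $(\omega,r) \mapsto \hat h_r(\omega)$ from Lemma \ref{lCalcEst} with the measurability of the evaluation functional on $H$; the compactness arguments for the tie-breaking infimum are routine once (a) and (b) are in hand.
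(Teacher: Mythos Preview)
Your proposal is correct and follows essentially the same approach as the paper: both establish that the validation objective is continuous in $r$ (via Lemma \ref{lContEst}) and measurable in $\omega$ for each fixed $r$, then use countable dense subsets of $R$ and $R\cap[0,t]$ together with compactness to show $\{\hat r \leq t\}\in\mathcal{F}$. The only cosmetic difference is that you package the final step as $\{\hat r \leq t\}=\{m_t=m\}$, whereas the paper writes out the equivalent $\bigcap_{a\geq 1}\bigcup_{s\in A_r}\{W(s)\leq \inf_{t\in R}W(t)+a^{-1}\}$ explicitly.
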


\begin{proof}
Let
\begin{equation*}
W(s) = \frac{1}{\tilde n} \sum_{i = 1}^{\tilde n} (V \hat h_s (\tilde X_i) - \tilde Y_i)^2
\end{equation*}
for $s \in R$. Note that $W(s)$ is a random variable on $(\Omega, \mathcal{F})$ and continuous in $s$ by Lemma \ref{lContEst}. Since $R \subs \real$, it is separable. Let $R_0$ be a countable dense subset of $R$. Then $\inf_{s \in R} W(s) = \inf_{s \in R_0} W(s)$ is a random variable on $(\Omega, \mathcal{F})$ as the right-hand side is the infimum of countably many random variables on $(\Omega, \mathcal{F})$. Let $r \in [0, \rho]$. By the definition of $\hat r$, we have
\begin{equation*}
\{ \hat r \leq r \} = \bigcup_{s \in R \cap [0, r]} \{ W(s) \leq \inf_{t \in R} W(t) \}.
\end{equation*}
Since $R \cap [0, r] \subs \real$, it is separable. Let $A_r$ be a countable dense subset of $R \cap [0, r]$. By the sequential compactness of $R \cap [0, r]$ and continuity of $W(s)$, we have
\begin{equation*}
\{ \hat r \leq r \} = \bigcap_{a = 1}^\infty \bigcup_{s \in A_r} \{ W(s) \leq \inf_{t \in R} W(t) + a^{-1} \}.
\end{equation*}
This set is an element of $\mathcal{F}$.
\end{proof}

\section{Subgaussian Random Variables}

We need the definition of a sub-$\sigma$-algebra for the next result. The $\sigma$-algebra $\mathcal{G}$ is a sub-$\sigma$-algebra of the $\sigma$-algebra $\mathcal{F}$ if $\mathcal{G} \subs \mathcal{F}$. The following lemma relates a quadratic form of subgaussians to that of centred normal random variables.

\begin{lemma} \label{lSubgauss}
Let $\eps_i$ for $1 \leq i \leq n$ be random variables on $(\Omega, \mathcal{F}, \prob)$ which are independent conditional on some sub-$\sigma$-algebra $\mathcal{G} \subs \mathcal{F}$ and let
\begin{equation*}
\expect( \exp(t \eps_i) \vert \mathcal{G}) \leq \exp( \sigma^2 t^2 / 2)
\end{equation*}
almost surely for all $t \in \real$. Also, let $\delta_i$ for $1 \leq i \leq n$ be random variables on $(\Omega, \mathcal{F}, \prob)$ which are independent of each other and $\mathcal{G}$ with $\delta_i \sim \norm(0, \sigma^2)$. Let $M$ be an $n \times n$ non-negative-definite matrix which is an $(\real^{n \times n}, \bor(\real^{n \times n}))$-valued measurable matrix on $(\Omega, \mathcal{G})$. We have
\begin{equation*}
\expect( \exp(z \eps^{\trans} M \eps ) \vert \mathcal{G}) \leq \expect( \exp(z \delta^{\trans} M \delta ) \vert \mathcal{G})
\end{equation*}
almost surely for all $z \geq 0$.
\end{lemma}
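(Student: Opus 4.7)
The plan is to reduce to the linear case by a Gaussian linearization of the quadratic form. Since $M$ is non-negative-definite and $\mathcal{G}$-measurable, Lemma \ref{lDiag} produces a measurable diagonalization $M = A D A^{\trans}$ with $A$ orthogonal and $D$ diagonal with non-negative entries, both $\mathcal{G}$-measurable; setting $M^{1/2} = A D^{1/2} A^{\trans}$ yields a non-negative-definite square root of $M$ which is again $\mathcal{G}$-measurable. Enlarging $(\Omega, \mathcal{F}, \prob)$ if necessary, I would introduce a standard Gaussian vector $\xi \sim \norm(0, I_n)$ independent of $\mathcal{G}$, of $\eps$ and of $\delta$; this enlargement does not alter the conditional laws of $\eps$ or $\delta$ given $\mathcal{G}$.

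The key identity is the Gaussian moment generating function applied coordinate by coordinate: for any $v \in \real^n$ and $t \geq 0$, $\exp(t \lVert v \rVert^2) = \expect( \exp((2t)^{1/2} \xi^{\trans} v) \vert v )$. Applied to $v = M^{1/2} \eps$ and averaged against $\prob(\cdot \vert \mathcal{G})$ using Fubini for non-negative integrands, which is justified because $\xi$ is independent of $(\mathcal{G}, \eps)$, this gives
\begin{equation*}
\expect( \exp(t \eps^{\trans} M \eps) \vert \mathcal{G}) = \expect \left ( \expect \left ( \exp((2t)^{1/2} (M^{1/2} \xi)^{\trans} \eps) \; \middle \vert \; \mathcal{G}, \xi \right ) \; \middle \vert \; \mathcal{G} \right )
\end{equation*}
almost surely. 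To bound the inner expectation, condition on $(\mathcal{G}, \xi)$: the vector $a = M^{1/2} \xi$ is then fixed, and because $\xi$ is independent of $\eps$ given $\mathcal{G}$, the $\eps_i$ remain conditionally independent with the same subgaussian bound as in the hypothesis. Factoring the exponential and applying the subgaussian assumption to each $\eps_i$ with the scalar $(2t)^{1/2} a_i$,
\begin{equation*}
\expect \left ( \exp \left ( (2t)^{1/2} \sum_{i = 1}^n a_i \eps_i \right ) \; \middle \vert \; \mathcal{G}, \xi \right ) \leq \prod_{i = 1}^n \exp(t \sigma^2 a_i^2) = \exp(t \sigma^2 \xi^{\trans} M \xi)
\end{equation*}
almost surely, using $\lVert M^{1/2} \xi \rVert^2 = \xi^{\trans} M \xi$.

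Averaging over $\xi$ and using that $\sigma \xi$ is independent of $\mathcal{G}$ with the same distribution as $\delta$ then yields
\begin{equation*}
\expect( \exp(t \eps^{\trans} M \eps) \vert \mathcal{G}) \leq \expect( \exp(t \sigma^2 \xi^{\trans} M \xi) \vert \mathcal{G}) = \expect( \exp(t \delta^{\trans} M \delta) \vert \mathcal{G}),
\end{equation*}
which is the required bound; the Gaussian side may of course equal $+\infty$ when $I - 2 t \sigma^2 M$ is not positive definite, in which case the inequality is trivial. The main obstacle is not analytical but bookkeeping: keeping careful track of which quantities are $\mathcal{G}$-measurable (so constant under the conditional expectation) and justifying the interchange of the order of integration; both are routine once $M^{1/2}$ is constructed via Lemma \ref{lDiag} and $\xi$ is chosen independent of $(\mathcal{G}, \eps)$.
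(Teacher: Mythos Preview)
Your proposal is correct and follows essentially the same approach as the paper: construct a $\mathcal{G}$-measurable square root $M^{1/2}$ via Lemma \ref{lDiag}, use the Gaussian moment generating function to linearize the quadratic form, apply the coordinate-wise subgaussian bound to the resulting linear form, and then integrate back over the auxiliary Gaussian. The only cosmetic difference is the order of the steps and the parameterization: the paper first writes the vector subgaussian bound $\expect(\exp(t^{\trans}\eps/\sigma)\mid\mathcal{G})\le\exp(\lVert t\rVert_2^2/2)$, substitutes $t=sM^{1/2}u$, and then integrates $u$ against the law of $\delta$, whereas you start from the Gaussian linearization identity and integrate over $\xi\sim\norm(0,I_n)$ before identifying $\sigma\xi$ with $\delta$.
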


\begin{proof}
This proof method uses techniques from the proof of Lemma 9 of \citet{abbasi2011improved}. We have
\begin{equation*}
\expect( \exp(t_i \eps_i / \sigma) \vert \mathcal{G}) \leq \exp( t_i^2 / 2)
\end{equation*}
almost surely for all $1 \leq i \leq n$ and $t_i \in \real$. Furthermore, the $\eps_i$ are independent conditional on $\mathcal{G}$, so
\begin{equation*}
\expect( \exp(t^{\trans} \eps / \sigma) \vert \mathcal{G}) \leq \exp( \lVert t \rVert_2^2 / 2)
\end{equation*}
almost surely for all $t \in \real^n$. By Lemma \ref{lDiag} with $\mathcal{F}$ replaced by $\mathcal{G}$, there exist an orthogonal matrix $A$ and a diagonal matrix $D$ which are both $(\real^{n \times n}, \bor(\real^{n \times n}))$-valued measurable matrices on $(\Omega, \mathcal{G})$ such that $M = A D A^{\trans}$. Hence, $M$ has a square root $M^{1/2} = A D^{1/2} A^{\trans}$ which is an $(\real^{n \times n}, \bor(\real^{n \times n}))$-valued measurable matrix on $(\Omega, \mathcal{G})$, where $D^{1/2}$ is the diagonal matrix with entries equal to the square root of those of $D$. Note that these entries are non-negative because $M$ is non-negative definite. We can then replace $t$ with $s M^{1/2} u$ for $s \in \real$ and $u \in \real^n$ to get
\begin{equation*}
\expect( \exp(s u^{\trans} M^{1/2} \eps / \sigma) \vert \mathcal{G}) \leq \exp( s^2 \lVert M^{1/2} u \rVert_2^2 / 2)
\end{equation*}
almost surely. Integrating over $u$ with respect to the distribution of $\delta$ gives
\begin{equation*}
\expect( \exp(s \delta^{\trans} M^{1/2} \eps / \sigma) \vert \mathcal{G}) \leq \expect( \exp( s^2 \delta^{\trans} M \delta / 2) \vert \mathcal{G})
\end{equation*}
almost surely. The moment generating function of $\delta$ gives the left-hand side as being
\begin{equation*}
\expect( \exp(s^2 \eps^{\trans} M \eps / 2) \vert \mathcal{G})
\end{equation*}
almost surely. The result follows.
\end{proof}
Having established this relationship, we can now obtain a probability bound on a quadratic form of subgaussians by using Chernoff bounding. The following result is a conditional subgaussian version of the Hanson--Wright inequality.

\begin{lemma} \label{lSubgaussProb}
Let $\eps_i$ for $1 \leq i \leq n$ be random variables on $(\Omega, \mathcal{F}, \prob)$ which are independent conditional on some sub-$\sigma$-algebra $\mathcal{G} \subs \mathcal{F}$ and let
\begin{equation*}
\expect( \exp(t \eps_i) \vert \mathcal{G}) \leq \exp( \sigma^2 t^2 / 2)
\end{equation*}
almost surely for all $t \in \real$. Let $M$ be an $n \times n$ non-negative-definite matrix which is an $(\real^{n \times n}, \bor(\real^{n \times n}))$-valued measurable matrix on $(\Omega, \mathcal{G})$ and $z \geq 0$. We have
\begin{equation*}
\eps^{\trans} M \eps \leq \sigma^2 \tr(M) + 2 \sigma^2 \lVert M \rVert z + 2 \sigma^2 ( \lVert M \rVert^2 z^2 + \tr(M^2) z )^{1/2}
\end{equation*}
with probability at least $1 - e^{-z}$ almost surely conditional on $\mathcal{G}$. Here, $\lVert M \rVert$ is the operator norm of $M$, which is a random variable on $(\Omega, \mathcal{G})$.
\end{lemma}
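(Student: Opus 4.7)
\textbf{Proof plan for Lemma \ref{lSubgaussProb}.} The strategy is to combine Lemma \ref{lSubgauss} with a direct computation of the conditional moment generating function (MGF) of a Gaussian quadratic form, and then to apply a Chernoff bound. All steps must be carried out conditional on $\mathcal{G}$ and hold almost surely; the $\mathcal{G}$-measurability of the spectral quantities appearing below is ensured by Lemma \ref{lDiag} applied with $\mathcal{F}$ replaced by $\mathcal{G}$.

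First I would introduce an auxiliary vector $\delta$ of i.i.d.\@ $\norm(0, \sigma^2)$ entries independent of $\mathcal{G}$, and invoke Lemma \ref{lSubgauss} to obtain
\begin{equation*}
\expect(\exp(s \eps^{\trans} M \eps) \vert \mathcal{G}) \leq \expect(\exp(s \delta^{\trans} M \delta) \vert \mathcal{G})
\end{equation*}
almost surely for all $s \geq 0$. Using the diagonalisation $M = A D A^{\trans}$ from Lemma \ref{lDiag} with the $\mathcal{G}$-measurable orthogonal matrix $A$, conditional on $\mathcal{G}$ the vector $A^{\trans} \delta$ is again $\norm(0, \sigma^2 I)$, so $\delta^{\trans} M \delta$ becomes a weighted sum $\sum_i D_{i,i} \xi_i^2$ of independent $\norm(0, \sigma^2)$ random variables. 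The standard chi-squared MGF then gives, almost surely and for $0 \leq s < 1/(2 \sigma^2 \lVert M \rVert)$,
\begin{equation*}
\log \expect(\exp(s \delta^{\trans} M \delta) \vert \mathcal{G}) = -\tfrac{1}{2} \sum_{i=1}^n \log(1 - 2 s \sigma^2 D_{i,i}).
\end{equation*}

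Next I would bound this via the Taylor expansion $-\log(1-y) = \sum_{k \geq 1} y^k/k$, so that the sum becomes $\sum_{k \geq 1} (2 s \sigma^2)^k \tr(M^k)/(2k)$. The $k = 1$ term is $s \sigma^2 \tr(M)$. For $k \geq 2$ I would use the bound $\tr(M^k) \leq \lVert M \rVert^{k-2} \tr(M^2)$ together with $1/(2k) \leq 1/4$ to sum the geometric tail, yielding
\begin{equation*}
\log \expect(\exp(s \eps^{\trans} M \eps) \vert \mathcal{G}) \leq s \sigma^2 \tr(M) + \frac{s^2 \sigma^4 \tr(M^2)}{1 - 2 s \sigma^2 \lVert M \rVert}
\end{equation*}
almost surely, which is the familiar Bernstein-type bound on the MGF of a centred quadratic form.

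Finally I would apply a conditional Chernoff bound to $\eps^{\trans} M \eps$ with the explicit choice
\begin{equation*}
s = \frac{x}{2 \sigma^4 \tr(M^2) + 2 \sigma^2 \lVert M \rVert x}, \qquad x = 2 \sigma^2 \lVert M \rVert t + 2 \sigma^2 (\lVert M \rVert^2 t^2 + \tr(M^2) t)^{1/2},
\end{equation*}
which is easily checked to lie in the admissible range $[0, 1/(2 \sigma^2 \lVert M \rVert))$. A short algebraic manipulation shows that this $s$ makes the exponent in the Chernoff bound equal to $-x^2/(4 (\sigma^4 \tr(M^2) + \sigma^2 \lVert M \rVert x))$, and the quadratic inequality $x^2 \geq 4 t (\sigma^4 \tr(M^2) + \sigma^2 \lVert M \rVert x)$ holds by construction of $x$, delivering the required tail bound $e^{-t}$ almost surely. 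The main obstacle is this last optimisation step: tuning $s$ to absorb both the variance-like term $\sigma^4 \tr(M^2)$ and the operator-norm-like term $\sigma^2 \lVert M \rVert$ into the advertised sum-of-square-root shape, while checking that the admissibility constraint $2 s \sigma^2 \lVert M \rVert < 1$ is preserved, requires care but is entirely elementary once the form of $s$ is guessed.
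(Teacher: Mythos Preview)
Your proposal is correct and follows essentially the same route as the paper: reduce to a Gaussian quadratic form via Lemma \ref{lSubgauss}, compute the chi-squared MGF after diagonalising $M$ with Lemma \ref{lDiag}, Taylor-expand to obtain the Bernstein-type bound $s\sigma^2\tr(M)+s^2\sigma^4\tr(M^2)/(1-2s\sigma^2\lVert M\rVert)$, and then Chernoff-bound with exactly the same choice of $s$. The only cosmetic difference is that you aggregate the Taylor series across all eigenvalues at once via $\tr(M^k)$, whereas the paper bounds each eigenvalue's factor separately and then takes the product; both arrive at the identical MGF inequality.
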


\begin{proof}
This proof method follows that of Theorem 3.1.9 of \citet{gine2015mathematical}. By Lemma \ref{lDiag} with $\mathcal{F}$ replaced by $\mathcal{G}$, there exist an orthogonal matrix $A$ and a diagonal matrix $D$ which are both $(\real^{n \times n}, \bor(\real^{n \times n}))$-valued measurable matrices on $(\Omega, \mathcal{G})$ such that $M = A D A^{\trans}$. Let $\delta_i$ for $1 \leq i \leq n$ be random variables on $(\Omega, \mathcal{F}, \prob)$ which are independent of each other and $\mathcal{G}$, with $\delta_i \sim \norm(0, \sigma^2)$. By Lemma \ref{lSubgauss} and the fact that $A^{\trans} \delta$ has the same distribution as $\delta$, we have
\begin{equation*}
\expect( \exp(t \eps^{\trans} M \eps ) \vert \mathcal{G}) \leq \expect( \exp(t \delta^{\trans} M \delta ) \vert \mathcal{G}) = \expect( \exp(t \delta^{\trans} D \delta ) \vert \mathcal{G})
\end{equation*}
almost surely for all $t \geq 0$. Furthermore,
\begin{equation*}
\expect( \exp(t \delta_i^2 / \sigma^2 )) = \int_{- \infty}^\infty \frac{1}{(2 \pi)^{1/2}} \exp(t x^2 - x^2 / 2) dx = \frac{1}{(1 - 2 t)^{1/2}}
\end{equation*}
for $0 \leq t < 1/2$ and $1 \leq i \leq n$, so
\begin{equation*}
\expect( \exp(t (\delta_i^2 / \sigma^2 - 1))) = \exp(- ( \log(1 - 2 t) + 2 t) / 2).
\end{equation*}
We have
\begin{equation*}
- 2 ( \log(1 - 2 t) + 2 t) \leq \sum_{i = 2}^\infty (2 t)^i (2 / i) \leq 4 t^2 / (1 - 2 t)
\end{equation*}
for $0 \leq t < 1/2$. Therefore, since the $\delta_i$ are independent of $\mathcal{G}$, we have
\begin{equation*}
\expect( \exp(t D_{i, i} (\delta_i^2 - \sigma^2)) \vert \mathcal{G}) \leq \exp \left ( \frac{\sigma^4 D_{i, i}^2 t^2}{1 - 2 \sigma^2 D_{i, i} t} \right )
\end{equation*}
almost surely for $0 \leq t < 1/(2 \sigma^2 D_{i, i})$ and $1 \leq i \leq n$. Since the $D_{i, i}$ are random variables on $(\Omega, \mathcal{G})$ and the $D_{i, i} \delta_i$ for $1 \leq i \leq n$ are independent conditional on $\mathcal{G}$, we have
\begin{equation*}
\expect( \exp(t (\delta^{\trans} D \delta - \sigma^2 \tr(D))) \vert \mathcal{G}) \leq \exp \left ( \frac{\sigma^4 \tr(D^2) t^2}{1 - 2 \sigma^2 (\max_i D_{i, i}) t} \right )
\end{equation*}
almost surely for $0 \leq t < 1/(2 \sigma^2 (\max_i D_{i, i}))$. Combining this with $\expect( \exp(t \eps^{\trans} M \eps ) \vert \mathcal{G}) \leq \expect( \exp(t \delta^{\trans} D \delta ) \vert \mathcal{G})$, we find
\begin{equation*}
\expect( \exp(t (\eps^{\trans} M \eps - \sigma^2 \tr(M))) \vert \mathcal{G}) \leq \exp \left ( \frac{\sigma^4 \tr(M^2) t^2}{1 - 2 \sigma^2 \lVert M \rVert t} \right )
\end{equation*}
almost surely for $0 \leq t < 1/(2 \sigma^2 \lVert M \rVert)$. By Chernoff bounding, we have
\begin{equation*}
\eps^{\trans} M \eps - \sigma^2 \tr(M) > s
\end{equation*}
for $s \geq 0$ with probability at most
\begin{equation*}
\exp \left ( \frac{\sigma^4 \tr(M^2) t^2}{1 - 2 \sigma^2 \lVert M \rVert t}  - t s \right )
\end{equation*}
almost surely conditional on $\mathcal{G}$ for $0 \leq t < 1/(2 \sigma^2 \lVert M \rVert)$. Letting
\begin{equation*}
t = \frac{s}{2 \sigma^4 \tr(M^2) + 2 \sigma^2 \lVert M \rVert s}
\end{equation*}
gives the bound
\begin{equation*}
\exp \left ( - \frac{s^2}{4 \sigma^4 \tr(M^2) + 4 \sigma^2 \lVert M \rVert s} \right ).
\end{equation*}
Rearranging gives the result.
\end{proof}

\section{Covering Numbers}

The following lemma gives a bound on the covering numbers of $F$.

\begin{lemma} \label{lCoverNum}
Let $\eps > 0$. We have
\begin{equation*}
N(F, \lVert \cdot \rVert_\infty, \eps) \leq 1 + \frac{\lVert k \rVert_\infty^2 \rho^2}{2 \eps^2}.
\end{equation*}
\end{lemma}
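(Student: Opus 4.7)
The plan is to transfer the covering problem from $F$ back to the interval $[0, \rho^2]$ by establishing a Lipschitz-type bound of $V \hat h_r$ in $r$. The three ingredients are: (i) the projection $V$ is a contraction on $\real$, so $\lVert V \hat h_r - V \hat h_s \rVert_\infty \leq \lVert \hat h_r - \hat h_s \rVert_\infty$; (ii) the reproducing kernel property together with the Cauchy--Schwarz inequality give $\lvert h(x) \rvert = \lvert \langle h, k_x \rangle_H \rvert \leq k(x,x)^{1/2} \lVert h \rVert_H \leq \lVert k \rVert_\infty \lVert h \rVert_H$, so $\lVert h \rVert_\infty \leq \lVert k \rVert_\infty \lVert h \rVert_H$ for any $h \in H$; and (iii) Lemma \ref{lContEst}, which says $\lVert \hat h_r - \hat h_s \rVert_H^2 \leq \lvert r^2 - s^2 \rvert$.

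Combining these three estimates gives, for any $r, s \in R$,
\begin{equation*}
\lVert V \hat h_r - V \hat h_s \rVert_\infty \leq \lVert k \rVert_\infty \, \lvert r^2 - s^2 \rvert^{1/2}.
\end{equation*}
So it suffices to find a finite set $R_\eps \subs R$ such that for every $r \in R$ there is some $s \in R_\eps$ with $\lvert r^2 - s^2 \rvert \leq \eps^2 / \lVert k \rVert_\infty^2$. Since $\{ r^2 : r \in R \} \subs [0, \rho^2]$, the problem reduces to covering the interval $[0, \rho^2]$ by intervals of length $2 \eps^2 / \lVert k \rVert_\infty^2$ and pulling back to points of $R$ via the map $r \mapsto r^2$.

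Partitioning $[0, \rho^2]$ into consecutive closed intervals of length $2 \eps^2 / \lVert k \rVert_\infty^2$ requires at most $\lceil \lVert k \rVert_\infty^2 \rho^2 / (2 \eps^2) \rceil$ of them, and from each non-empty intersection with $\{r^2 : r \in R\}$ we pick one representative $s$. Then any $r \in R$ lies in the same interval as some chosen $s$, so $\lvert r^2 - s^2 \rvert \leq 2 \eps^2 / \lVert k \rVert_\infty^2$. This gives the factor of $2$ wrongly; it is cleaner to centre the intervals, i.e.\ use balls of radius $\eps^2 / \lVert k \rVert_\infty^2$ in the $\lvert \cdot \rvert$-metric on $[0, \rho^2]$, which still needs at most $\lceil \lVert k \rVert_\infty^2 \rho^2 / (2 \eps^2) \rceil$ of them. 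Using $\lceil x \rceil \leq 1 + x$ for $x \geq 0$ yields the claimed bound $1 + \lVert k \rVert_\infty^2 \rho^2 / (2 \eps^2)$.

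There is no real obstacle here; the only thing to be careful about is the square-versus-square-root: the natural Lipschitz bound is in terms of $\lvert r^2 - s^2 \rvert^{1/2}$, not $\lvert r - s \rvert$, which is why the cover is constructed on $[0, \rho^2]$ rather than on $[0, \rho]$ and why the bound scales like $\rho^2 / \eps^2$ rather than $\rho / \eps$.
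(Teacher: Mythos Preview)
Your proposal is correct and follows essentially the same route as the paper's proof: both combine the contraction property of $V$, the bound $\lVert h \rVert_\infty \leq \lVert k \rVert_\infty \lVert h \rVert_H$, and Lemma \ref{lContEst} to reduce the covering of $F$ in $\lVert \cdot \rVert_\infty$ to covering $[0, \rho^2]$ by intervals of radius $\eps^2/\lVert k \rVert_\infty^2$, then take the ceiling and use $\lceil x \rceil \leq 1 + x$. The paper even chooses the same explicit centres $r_i^2 = \eps^2(2i-1)/\lVert k \rVert_\infty^2$ that your ``centre the intervals'' correction amounts to; your detour through the partition-then-fix-the-factor-of-$2$ remark is unnecessary but harmless, and could be dropped for a cleaner write-up.
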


\begin{proof}
Let $a \geq 1$ and $r_i \in R$ and $f_i = V \hat h_{r_i} \in F$ for $1 \leq i \leq a$. Also, let $f = V \hat h_r \in F$ for $r \in R$. Since $V$ is a contraction, we have $\lVert f - f_i \rVert_\infty \leq \eps$ whenever $\lVert \hat h_r - \hat h_{r_i} \rVert_\infty \leq \eps$. By Lemma \ref{lContEst}, we have $\lVert \hat h_r - \hat h_{r_i} \rVert_\infty \leq \eps$ whenever $\lvert r^2 - r_i^2 \rvert \leq \eps^2/\lVert k \rVert_\infty^2$. Hence, if we let $r_i^2 = \eps^2 (2 i - 1) / \lVert k \rVert_\infty^2$ and let $\rho$ be such that
\begin{equation*}
\rho^2 - \eps^2 (2 a - 1) / \lVert k \rVert_\infty^2 \leq \eps^2/\lVert k \rVert_\infty^2,
\end{equation*}
then we find $N(F, \lVert \cdot \rVert_\infty, \eps) \leq a$. Rearranging the above shows that we can choose
\begin{equation*}
a = \left \lceil \frac{\lVert k \rVert_\infty^2 \rho^2}{2 \eps^2} \right \rceil
\end{equation*}
and the result follows.
\end{proof}
We also calculate integrals of these covering numbers.

\begin{lemma} \label{lCoverInt}
Let $a \geq 1$. We have
\begin{equation*}
\int_0^L ( \log(a N(F, \lVert \cdot \rVert_\infty, \eps)))^{1/2} d\eps \leq \left ( \log \left ( \left (1 + \frac{\lVert k \rVert_\infty^2 \rho^2}{2 L^2} \right ) a \right ) \right )^{1/2} L + \left ( \frac{\pi}{2} \right )^{1/2} L
\end{equation*}
for $L \in (0, \infty)$. When $a = 1$, we have
\begin{equation*}
\int_0^L ( \log(N(F, \lVert \cdot \rVert_\infty, \eps)))^{1/2} d\eps \leq 2 \left ( \log \left (1 + \frac{\lVert k \rVert_\infty^2 \rho^2}{8 C^2} \right ) \right )^{1/2} C + (2 \pi)^{1/2} C
\end{equation*}
for $L \in (0, \infty]$.
\end{lemma}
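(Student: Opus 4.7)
The plan is to begin from the pointwise bound $N(F, \lVert \cdot \rVert_\infty, \eps) \leq 1 + \lVert k \rVert_\infty^2 \rho^2/(2 \eps^2)$ supplied by Lemma \ref{lCoverNum} and write $K = \lVert k \rVert_\infty^2 \rho^2/2$ for brevity. For any $\eps \in (0, L]$ I would split
\[
\log\bigl(a(1 + K/\eps^2)\bigr) = \log\bigl(a(1 + K/L^2)\bigr) + \log\left(\frac{L^2}{\eps^2}\cdot\frac{\eps^2 + K}{L^2 + K}\right),
\]
and observe that the ratio $(\eps^2 + K)/(L^2 + K)$ is at most $1$ for $\eps \leq L$, so that the second logarithm is bounded by $2 \log(L/\eps)$. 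Applying $\sqrt{x + y} \leq \sqrt{x} + \sqrt{y}$ gives the envelope
\[
\sqrt{\log(a N(\eps))} \leq \sqrt{\log\bigl(a(1 + K/L^2)\bigr)} + \sqrt{2 \log(L/\eps)}
\]
for $\eps \in (0, L]$.

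Integrating the first summand trivially produces the leading term $L \sqrt{\log(a(1 + K/L^2))}$, which is the first piece of the stated bound. For the second summand, I would make the substitution $u = \log(L/\eps)$, so that $\eps = L e^{-u}$ and $d\eps = -L e^{-u}\, du$, which converts
\[
\int_0^L \sqrt{2 \log(L/\eps)}\, d\eps = L \sqrt{2}\int_0^\infty \sqrt{u}\, e^{-u}\, du = L \sqrt{2}\,\Gamma(3/2) = L \sqrt{\pi/2}.
\]
Combining these two integrals produces exactly the claimed bound in the first half of the lemma.

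For the second half, with $a = 1$ and $L \in (0, \infty]$, the extra ingredient is the diameter of $F$. Since $F = \{ V \hat h_r : r \in R \}$ and $V$ takes values in $[-C, C]$, any two elements of $F$ differ by at most $2C$ in $\lVert \cdot \rVert_\infty$, so centering a single ball at an arbitrary $f_0 \in F$ covers $F$ as soon as the radius reaches $2C$. Hence $N(F, \lVert \cdot \rVert_\infty, \eps) = 1$ and the integrand vanishes for $\eps \geq 2C$, which lets me replace the upper limit $L$ by $2C$. Applying the first half with $a = 1$ and $L = 2C$ then yields $2C\sqrt{\log(1 + K/(4C^2))} + 2C\sqrt{\pi/2}$, which is exactly $2C\sqrt{\log(1 + \lVert k \rVert_\infty^2 \rho^2/(8C^2))} + C\sqrt{2\pi}$ after substituting $K = \lVert k \rVert_\infty^2 \rho^2/2$ and simplifying $2C\sqrt{\pi/2} = C\sqrt{2\pi}$.

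The only step that requires any real care is the additive splitting of the logarithm; once the monotonicity $(\eps^2 + K)/(L^2 + K) \leq 1$ is in hand for $\eps \leq L$, the remaining work is the standard gamma-function evaluation and the diameter observation, both of which are routine.
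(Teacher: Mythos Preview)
Your proof is correct and follows essentially the same approach as the paper: both bound $\log(a(1+K/\eps^2))$ by $\log(a(1+K/L^2)) + \log(L^2/\eps^2)$ via the same monotonicity observation, split the square root subadditively, and then evaluate $\int_0^L \sqrt{2\log(L/\eps)}\,d\eps = L\sqrt{\pi/2}$ (the paper reaches this via the substitution $s=\sqrt{\log(1/u^2)}$ and a Gaussian second-moment integral rather than your $\Gamma(3/2)$, but the computation is the same). The second half, using the $2C$ diameter of $F$ to truncate the integral and then invoking the first half with $L=2C$, is also identical to the paper's argument.
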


\begin{proof}
Let $L \in (0, \infty)$. Then
\begin{equation*}
\int_0^L ( \log(a N(F, \lVert \cdot \rVert_\infty, \eps)))^{1/2} d\eps \leq \int_0^L \left ( \log \left (a \left (1 + \frac{\lVert k \rVert_\infty^2 \rho^2}{2 \eps^2} \right ) \right ) \right )^{1/2} d\eps
\end{equation*}
by Lemma \ref{lCoverNum}. Changing variables to $u = \eps/L$ gives
\begin{align*}
&L \int_0^1 \left ( \log \left (a \left (1 + \frac{\lVert k \rVert_\infty^2 \rho^2}{2 L^2 u^2} \right ) \right ) \right )^{1/2} du \\
\leq \; &L \int_0^1 \left ( \log \left (a \left (1 + \frac{\lVert k \rVert_\infty^2 \rho^2}{2 L^2} \right ) \frac{1}{u^2} \right ) \right )^{1/2} du \\
= \; &L \int_0^1 \left ( \log \left (a \left (1 + \frac{\lVert k \rVert_\infty^2 \rho^2}{2 L^2} \right ) \right ) + \log \left ( \frac{1}{u^2} \right ) \right )^{1/2} du.
\end{align*}
For $b, c \geq 0$ we have $(b + c)^{1/2} \leq b^{1/2} + c^{1/2}$, so the above is at most
\begin{align*}
&L \int_0^1 \left ( \log \left (a \left (1 + \frac{\lVert k \rVert_\infty^2 \rho^2}{2 L^2} \right ) \right ) \right )^{1/2} du + L \int_0^1 \left ( \log \left ( \frac{1}{u^2} \right ) \right )^{1/2} du \\
= \; &L \left ( \log \left (a \left (1 + \frac{\lVert k \rVert_\infty^2 \rho^2}{2 L^2} \right ) \right ) \right )^{1/2} + L \int_0^1 \left ( \log \left ( \frac{1}{u^2} \right ) \right )^{1/2} du.
\end{align*}
Changing variables to
\begin{equation*}
s = \left ( \log \left ( \frac{1}{u^2} \right ) \right )^{1/2}
\end{equation*}
shows
\begin{align*}
\int_0^1 \left ( \log \left ( \frac{1}{u^2} \right ) \right )^{1/2} du &= \int_0^\infty s^2 \exp(- s^2/2) ds \\
&= \frac{1}{2} \int_{- \infty}^\infty s^2 \exp(- s^2/2) ds \\
&= \left ( \frac{\pi}{2} \right )^{1/2},
\end{align*}
since the last integral is a multiple of the variance of an $\norm(0, 1)$ random variable. The first result follows. Note that $N(F, \lVert \cdot \rVert_\infty, \eps) = 1$ whenever $\eps \geq 2 C$, as the ball of radius $2 C$ about any point in $F$ is the whole of $F$. Hence, when $a = 1$, we have
\begin{align*}
\int_0^L ( \log(N(F, \lVert \cdot \rVert_\infty, \eps)))^{1/2} d \eps &\leq \int_0^\infty ( \log(N(F, \lVert \cdot \rVert_\infty, \eps)))^{1/2} d\eps \\
&= \int_0^{2 C} ( \log(N(F, \lVert \cdot \rVert_\infty, \eps)))^{1/2} d\eps \\
&\leq 2 \left ( \log \left (1 + \frac{\lVert k \rVert_\infty^2 \rho^2}{8 C^2} \right ) \right )^{1/2} C + (2 \pi)^{1/2} C
\end{align*}
for $L \in (0, \infty]$.
\end{proof}

\bibliography{paper}

\end{document}